\newtheorem{theorem}{Theorem}[section]
\newtheorem{lemma}[theorem]{Lemma}
\newtheorem{proposition}[theorem]{Proposition}
\newtheorem{conjecture}{Conjecture}[section]
\newtheorem{corollary}[theorem]{Corollary}
\newtheorem{question}{Question}[section]
\newtheorem{claim}{Claim}[section]
\newenvironment{proof}
      {\medskip\noindent{\bf Proof:}\hspace{1mm}}
      {\hfill$\Box$\medskip}
\def\qed{\ifvmode\mbox{ }\else\unskip\fi\hskip 1em plus 10fill$\Box$}
\newenvironment{proofof}[1]
      {\medskip\noindent{\bf Proof of #1:}\hspace{1mm}}
      {\hfill$\Box$\medskip}
\def\Ddots{\mathinner{\mkern1mu\raise\p@
\vbox{\kern7\p@\hbox{.}}\mkern2mu
\raise4\p@\hbox{.}\mkern2mu\raise7\p@\hbox{.}\mkern1mu}}
\title{\vspace{-0.7cm}Short proofs of some extremal results III}
\author{David Conlon\thanks{Department of Mathematics, California Institute of Technology, Pasadena, CA 91125. Email: {\tt dconlon@caltech.edu}. Research
supported in part by ERC Starting Grant 676632.}\and
Jacob Fox\thanks{Department of Mathematics, Stanford University, Stanford, CA 94305. Email: {\tt jacobfox@stanford.edu}. Research supported by a Packard Fellowship and by NSF Award DMS-1855635.}
\and
Benny Sudakov\thanks{Department of Mathematics, ETH, 8092 Zurich, Switzerland.
Email: {\tt benjamin.sudakov@math.ethz.ch}. Research supported by SNSF grant 200021-175573.}}
\date{}
\begin{document}
\maketitle

\begin{abstract}
We prove a selection of results from different areas of extremal combinatorics, including complete or partial solutions to a number of open problems. These results, coming mainly from extremal graph theory and Ramsey theory, have been collected together because in each case the relevant proofs are reasonably short. 
\end{abstract}

\section{Introduction}

We study several questions from extremal combinatorics, a broad area of discrete mathematics which deals with the problem of maximizing or minimizing the cardinality of a collection of finite objects satisfying certain properties. The problems we consider come mainly from the areas of extremal graph theory and Ramsey theory, though they also touch upon additive combinatorics and random graphs. In many cases, they give complete or partial solutions to open problems posed by other researchers.

While each of the results in this paper is interesting in its own right, the proofs are all quite short. Accordingly, in the spirit of Alon's `Problems and results in extremal combinatorics' papers~\cite{Al1, Al2, Al3} and our own earlier papers~\cite{CFS14, CFS16}, we have combined them. We describe the results in brief below, though we refer the reader to the relevant section for full details on each topic. Each section is intended to be self-contained and may be read separately from all others.

In Section~\ref{sec:ind}, we study the extremal problem of how sparse a graph must be to guarantee that there is an independent set of order $k$ whose vertices form an arithmetic progression and present applications of our result to several questions in Ramsey theory. In Section~\ref{sec:joint}, we answer a question of Mubayi~\cite{Mubayi} about the interplay between the local and global counts of copies of the clique $K_r$ in graphs with at least the extremal number of edges. In Section~\ref{sec:inf}, we study the distribution of edges in infinite $K_{s,t}$-free graphs, solving a problem posed by Cilleruelo~\cite{C16}. Section~\ref{sec:ramcon} contains a short study of the concentration of Ramsey numbers of random graphs, while, in Section~\ref{sec:mult}, we show that the Ramsey multiplicity of a graph may change substantially when we increase the number of colors from two to three. Finally, we partially answer a Ramsey problem of F\"uredi, Gy\'arf\'as and Simonyi~\cite{FuGySi} on connected matchings in Section~\ref{sec:match}.

All logarithms are base $2$ unless otherwise stated. For the sake of clarity of presentation, we systematically omit floor and ceiling signs whenever they are not crucial. We also do not make any serious attempt to optimize absolute constants in our statements and proofs. 

\section{Independent arithmetic progressions} \label{sec:ind}

A classical theorem of Tur\'an \cite{Tu} shows that any graph on $n$ vertices with less than $\frac{n(n-k+1)}{2(k-1)}$ edges contains an independent set of order $k$. The celebrated Szemer\'edi's theorem \cite{Sz} states that for $\delta>0$, $k \in \mathbb{N}$, and $n$ sufficiently large in terms of $k$ and $\delta$, any subset of $[n]=\{1,\ldots,n\}$ of order at least $\delta n$ contains a $k$-term arithmetic progression. Here we marry the themes of these results and deduce as consequences bounds on three other well-studied problems on rainbow arithmetic progressions and set mappings. 

Given a graph with vertex set $[n]$, a $k$-term arithmetic progression is said to be {\it independent} if it is an independent set in the graph. Our main result is a Tur\'an-type theorem, showing that any sparse graph on vertex set $[n]$ contains an independent arithmetic progression. Before proving this result, we need a standard estimate from number theory. Note that all logs will be taken to base $e$.

\begin{lemma} \label{lem:sieve}
There is a positive constant $\eta$ such that, for all $n \geq \eta^{-1} k \log k$, the number of integers from $[n]$ which are relatively prime to $1, 2, \dots, k$ is at least $\eta n/\log k$.
\end{lemma}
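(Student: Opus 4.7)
My plan: An integer $m$ is coprime to each of $1, 2, \dots, k$ exactly when every prime factor of $m$ exceeds $k$. Hence it suffices to lower-bound the count of such \emph{$k$-rough} integers in $[n]$. I will handle two overlapping regimes, split according to how large $n$ is relative to $k$.

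When $\eta^{-1} k \log k \le n \le k^3$ I simply count the primes lying in $(k, n]$. By Chebyshev's bounds, there exist absolute constants $c_1, c_2 > 0$ with $\pi(N) \ge c_1 N/\log N$ and $\pi(k) \le c_2 k/\log k$. Since $\log n \le 3\log k$ throughout this range,
\[
\pi(n) - \pi(k) \;\ge\; \frac{c_1 n}{3\log k} - \frac{c_2 k}{\log k},
\]
and the hypothesis $n \ge \eta^{-1} k \log k$ makes the first term dominate once $\eta$ is small enough, producing at least $\eta n / \log k$ rough integers. For $n \ge k^2$ I invoke Brun's sieve (the so-called fundamental lemma of sieve theory), which yields
\[
\#\Bigl\{m \le n : \gcd\bigl(m, \textstyle\prod_{p\le k} p\bigr) = 1\Bigr\} \;\ge\; (1-o(1))\, n \prod_{p\le k}\!\left(1 - \tfrac{1}{p}\right),
\]
provided the sifting ratio $\log n / \log k$ is bounded away from $1$. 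Combining this with Mertens' third theorem, $\prod_{p\le k}(1-1/p) = (e^{-\gamma} + o(1))/\log k$, delivers at least $\eta n/\log k$ rough integers for an appropriate absolute $\eta$. The two regimes together cover all $n \ge \eta^{-1} k \log k$.

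The main obstacle is the large-$n$ case: a naive Möbius/inclusion-exclusion sieve over the squarefree divisors of $\prod_{p \le k} p$ incurs an error of order $2^{\pi(k)}$, which is useless unless $n$ is exponentially large in $k/\log k$. Brun's truncated inclusion-exclusion circumvents exactly this blow-up. A self-contained alternative would bootstrap the moderate-$n$ estimate by iteratively applying the map $m \mapsto qm$ for a fixed prime $q \in (k, 2k]$ (guaranteed by Bertrand's postulate), which preserves $k$-roughness, but invoking the fundamental lemma is cleaner and makes the constants transparent.
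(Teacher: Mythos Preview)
Your approach is sound but takes a different route from the paper's. The paper invokes Buchstab's asymptotic $\Phi(x,y) = w(u)x/\log y - y/\log y + O(x/\log^2 x)$ as a single black box, using that the Buchstab function $w(u)$ is bounded below by a positive constant for all $u>1$; this handles the full range $n \ge k\log k$ in one stroke (for large $k$), with small $k$ treated by a short monotonicity argument. Your two-regime split --- primes in $(k,n]$ via Chebyshev for moderate $n$, then the fundamental lemma for large $n$ --- replaces one deep asymptotic by two standard estimates, and the prime-counting observation for the lower range is a pleasant elementary ingredient absent from the paper's argument.

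One imprecision worth flagging: the fundamental lemma yields $\Phi(n,k) = (1+o(1))\,n\prod_{p\le k}(1-1/p)$ only as $u = \log n/\log k \to \infty$, not merely for $u$ bounded away from $1$. For bounded $u$ the ratio $\Phi(n,k)\big/\bigl(n\prod_{p\le k}(1-1/p)\bigr)$ tends to $e^{\gamma}w(u)$, which is strictly less than $1$ (about $0.89$ at $u=2$); and the sieve lower bound $1 - Ce^{-s}$ only becomes positive once $s$ exceeds some absolute $u_0$ depending on the implied constant in whichever version you cite. This is harmless for your argument: your Chebyshev regime extends verbatim to $n \le k^M$ for any fixed $M$ (replace $3\log k$ by $M\log k$ and adjust the constant), so take $M = u_0$ and hand over to the sieve only at $n \ge k^{u_0}$. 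With that adjustment the two regimes still overlap and the proof is complete.
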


\begin{proof}
Writing $\Phi(x, y)$ for the number of integers less than or equal to $x$ all of whose prime factors are greater than $y$, a result of Buchstab (see Section 7.2 of~\cite{MV06}) says that
\[\Phi(x,y) = \frac{w(u) x}{\log y} - \frac{y}{\log y} + O\left(\frac{x}{\log^2 x}\right),\]
where $u$ is defined by $y = x^{1/u}$ and $w(u)$ is the Buchstab function, equal to $1/u$ for $1 < u \leq 2$ and asymptotic to $e^{-\gamma}$, with $\gamma$ the Euler--Mascheroni constant, as $u$ tends to infinity. For $k$ sufficiently large, say $k \geq k_0$, and $n \geq k \log k$, the required estimate with $\eta = 1/10$ easily follows by applying this result with $x = n$ and $y = k$. For $k < k_0$, the estimate follows by choosing $\eta$ such that $\eta^{-1} \geq \max(20 \log k_0, k_0 \log k_0)$. Then $n \geq k_0 \log k_0$, so that $\Phi(n, k) \geq \Phi(n, k_0) \geq n/10 \log k_0 \geq \eta n/\log k$.
\end{proof}

Our main result, which is tight up to the logarithmic factor, is now as follows.

\begin{theorem}\label{thm:APmain}
There is a positive constant $\varepsilon$ such that any graph $G$ on $[n]$ with fewer than $\varepsilon \frac{n^2}{k^2 \log k}$ edges contains a $k$-term independent arithmetic progression.
\end{theorem}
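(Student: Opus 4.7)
The plan is to apply a first-moment/double-counting argument to a carefully chosen family of $k$-term arithmetic progressions in $[n]$. Specifically, I would consider APs of the form $a, a+d, a+2d, \ldots, a+(k-1)d$ whose common difference $d$ is coprime to $(k-1)! = 1 \cdot 2 \cdots (k-1)$. The role of this coprimality restriction is to severely limit how many of these APs can contain any fixed pair of vertices, which is exactly what a first-moment argument needs.

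Set $D := \lfloor n/(2(k-1)) \rfloor$ and let $\mathcal{A}$ be the collection of all APs of the above form with $1 \leq d \leq D$, with $d$ coprime to $(k-1)!$, and with $1 \leq a \leq n - (k-1)d$. Applying Lemma~\ref{lem:sieve} with $k-1$ in place of $k$ and $D$ in place of $n$, the number of admissible $d$'s is at least $\eta D/\log k$, provided $n$ is at least a constant multiple of $k^{2}\log k$. Since $n - (k-1)d \geq n/2$ for every such $d$, we get $|\mathcal{A}| \geq \frac{\eta n^{2}}{4(k-1)\log k}$.

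The key step is to bound, for a fixed edge $\{x, y\}$ of $G$ with difference $m := y-x > 0$, the number of APs in $\mathcal{A}$ that contain both endpoints. Any such AP corresponds to a factorisation $m = id$ with $1 \leq i \leq k-1$ and $\gcd(d, (k-1)!) = 1$. Writing $m = uv$, where $u$ is the part of $m$ supported on primes $\leq k-1$ and $v$ is the part supported on primes $> k-1$, the condition on $d$ forces $d \mid v$; writing $v = dw$ gives $i = uw$, and since every nontrivial divisor of $v$ exceeds $k-1$ we must have $w = 1$. Thus $(d, i) = (v, u)$ is uniquely determined by $m$, and once $(d,i)$ is fixed there are exactly $k - i \leq k-1$ choices of starting offset. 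Hence each edge is contained in at most $k-1$ members of $\mathcal{A}$.

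Double counting then gives $\sum_{A \in \mathcal{A}} e_{G}(A) \leq (k-1)|E(G)|$. Taking $\varepsilon$ small enough so that $|E(G)| < \varepsilon n^{2}/(k^{2}\log k)$ implies $(k-1)|E(G)| < |\mathcal{A}|$, the average number of edges per AP in $\mathcal{A}$ is strictly less than $1$, and therefore some AP in $\mathcal{A}$ is edge-free, hence independent. The main obstacle is the multiplicity bound of the previous paragraph, which is precisely where the choice of $d$ coprime to $(k-1)!$ does all the work; the rest is bookkeeping. Finally, the regime where $n$ is too small to invoke Lemma~\ref{lem:sieve} (that is, $n \ll k^{2}\log k$) can be dealt with by further shrinking $\varepsilon$, so that the hypothesis $|E(G)| < \varepsilon n^{2}/(k^{2}\log k)$ forces $|E(G)| = 0$, in which case the trivial AP $1, 2, \ldots, k$ is independent whenever $n \geq k$.
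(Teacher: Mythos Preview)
Your main argument (the case $n \gtrsim k^2\log k$) is correct and essentially identical to the paper's: you use the same family of progressions with difference coprime to all of $1,\ldots,k-1$, the same count via Lemma~\ref{lem:sieve}, and the same observation that any pair lies in at most $k-1$ such progressions. Your justification of the last point (via the factorisation $m=uv$) is in fact more explicit than the paper's one-line assertion.

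The gap is in your treatment of small $n$. You claim that when $n$ is below the threshold needed for Lemma~\ref{lem:sieve} (roughly $n < C k^2\log k$), one can shrink $\varepsilon$ so that $|E(G)| < \varepsilon n^2/(k^2\log k)$ forces $|E(G)|=0$. This is false: for $n$ just below the threshold, say $n \asymp k^2\log k$, the bound $\varepsilon n^2/(k^2\log k)$ is of order $\varepsilon k^2\log k$, which tends to infinity with $k$ no matter how small a fixed $\varepsilon>0$ you choose. So in this regime the graph may have many edges and your argument says nothing.

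The paper handles this range with a genuinely different (though equally short) idea: it takes the family $\mathcal{B}$ of $k$-APs whose common difference is prime, counts $|\mathcal{B}|\ge \frac{n}{2}\pi(n/2k)\gtrsim n^2/(k\log k)$ via the prime number theorem (here one only needs $\log(n/2k)=O(\log k)$, which is exactly what $n<Ck^2\log k$ provides), and again bounds the number of progressions in $\mathcal{B}$ containing a fixed edge by $O(k)$. You need some such second family to cover the intermediate range; the coprimality family alone cannot do it, because for $n$ comparable to $k^2$ there may be no integers at all in $[n/2k]$ free of prime factors below $k$.
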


\begin{proof} 
We split into two cases, depending on the size of $n$. For $n \geq 2 \eta^{-1} k^2 \log k$, where $\eta$ is as in Lemma~\ref{lem:sieve}, we consider the set of integers $X$ which are relatively prime to $1,2,\ldots,k$ and let $\mathcal{A}$ be the set of $k$-term arithmetic progressions in $[n]$ whose difference is in $X$. We can form an arithmetic progression in $\mathcal{A}$ by choosing the first term from $[n/2]$ and the common difference from $X \cap [n/2k]$. Therefore, since $n/2k \geq \eta^{-1} k \log k$, Lemma~\ref{lem:sieve} applies to show that $|\mathcal{A}| \geq \eta n^2/4k \log k$. Each pair of integers are in arithmetic progressions with at most one common difference in $X$ and, hence, are in at most $k-1$ arithmetic progressions in $\mathcal{A}$. Thus, the number of arithmetic progressions in $\mathcal{A}$ which contain an edge of $G$ is at most $e(G)k$. Taking $\varepsilon < \eta/8$, we have that $e(G)k <  \varepsilon \frac{n^2}{k \log k} < |\mathcal{A}|$, so there is an arithmetic progression in $\mathcal{A}$ which forms an independent set. 

For the second case, when $n < 2 \eta^{-1} k^2 \log k$, we let $\mathcal{B}$ be the set of $k$-term arithmetic progressions in $[n]$ whose difference is a prime. By the same argument as in the previous case, the number of arithmetic progressions in $\mathcal{B}$ which contain an edge of $G$ is at most $e(G)k < \varepsilon \frac{n^2}{k \log k}$. On the other hand, the number of progressions in $\mathcal{B}$ is at least $\pi(n/2k) n/2$, where $\pi(x)$ is the prime counting function. Since there exist positive constants $a$ and $C$ such that $\pi(x) > a \frac{x}{\log x}$ and $2 \eta^{-1} k^2 \log k < k^C$, we have that $\pi(n/2k) n/2 > \frac{a}{4C} \frac{n^2}{k \log k}$. Therefore, for $\varepsilon < a/(4C)$, there is an independent arithmetic progression.
\end{proof}

In a coloring of $[n]$, an arithmetic progression is {\it rainbow} if its elements are all different colors. The {\it sub-Ramsey number $sr(m,k)$} is the minimum $n$ such that every coloring of $[n]$ in which no color is used more than $m$ times has a 
rainbow $k$-term arithmetic progression. Alon, Caro, and Tuza \cite{ACT} proved that there are constants $c,c'>0$ such that $$c'\frac{mk^2}{\log mk} \leq sr(m,k) \leq c mk^2 \log (mk).$$ They also showed that there is an upper bound on $sr(m,k)$ 
which is linear in $m$ but with a worse dependence on $k$, namely, $sr(m,k) \leq cmk^3$. The lower bound was later improved by Fox, Jungi\'c, and Radoi\v ci\' c \cite{FJR} to $sr(m,k) \geq c'mk^2$. Here we improve on the upper bound of Alon, Caro, and Tuza \cite{ACT}. 

\begin{corollary}\label{subRamsey}
There is a constant $c$ such that the sub-Ramsey number satisfies 
$$sr(m,k) \leq cmk^2 \log k.$$
\end{corollary}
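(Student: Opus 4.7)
The plan is to reduce this to Theorem~\ref{thm:APmain} by encoding the coloring as a graph whose independent arithmetic progressions are exactly the rainbow ones. Given a coloring of $[n]$ in which no color is used more than $m$ times, I would define a graph $G$ on vertex set $[n]$ by joining two integers with an edge if and only if they receive the same color. Then a $k$-term arithmetic progression is independent in $G$ precisely when its elements are pairwise different colors, i.e.\ when it is rainbow, so it suffices to force an independent $k$-AP in $G$.

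To apply Theorem~\ref{thm:APmain}, I need to control $e(G)$. The key observation is that the edges of $G$ decompose into cliques, one per color class, and if the color classes have sizes $m_1,m_2,\ldots$ with each $m_i\le m$ and $\sum_i m_i = n$, then
\[
e(G) \;=\; \sum_i \binom{m_i}{2} \;\le\; \frac{m-1}{2}\sum_i m_i \;=\; \frac{(m-1)n}{2}.
\]
Now I take $n = c m k^2 \log k$ with $c$ to be chosen. Then the edge bound becomes $e(G) \le (m-1)n/2 < mn/2$, and I want this to be strictly less than $\varepsilon n^2/(k^2 \log k)$, where $\varepsilon$ is the constant from Theorem~\ref{thm:APmain}. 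This is equivalent to $n > m k^2 \log k /(2\varepsilon)$, which holds provided $c > 1/(2\varepsilon)$. With this choice, Theorem~\ref{thm:APmain} supplies an independent $k$-term AP in $G$, which gives a rainbow $k$-AP in the coloring.

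There is no serious obstacle here; the content is the correspondence between monochromatic pairs and edges, plus the convexity-type bound $\sum\binom{m_i}{2}\le \frac{m-1}{2}\sum m_i$ that turns the per-color cap $m$ into a linear edge bound in $n$. The logarithmic savings over the Alon--Caro--Tuza bound $cmk^2\log(mk)$ come directly from the $\log k$ (rather than $\log n$) denominator in Theorem~\ref{thm:APmain}.
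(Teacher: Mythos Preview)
Your proof is correct and is essentially identical to the paper's: define the same-color graph, bound its edges by $\tfrac{(m-1)n}{2}$, and invoke Theorem~\ref{thm:APmain} with $n$ a suitable multiple of $mk^2\log k$. The only cosmetic difference is that the paper phrases the edge bound via the convexity observation $\sum_i\binom{m_i}{2}\le \tfrac{n}{m}\binom{m}{2}$, while you use the equivalent pointwise bound $\binom{m_i}{2}\le \tfrac{m-1}{2}m_i$.
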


\begin{proof}
Consider a coloring of $[n]$ with $n=  \varepsilon^{-1} mk^2 \log k$, with $\varepsilon$ as in Theorem~\ref{thm:APmain}, where no color appears more than $m$ times. Define a graph on $[n]$ where two integers are adjacent if they receive the same color. The graph consists of a disjoint union of cliques of order at most $m$. Since the maximum of $\sum_i \binom{x_i}{2}$ under 
the constraint $\sum_i x_i=n$ occurs when each term is as large as possible, the number of edges in this graph is at most $\frac{n}{m}{m \choose 2}<\frac{nm}{2}$. Therefore, by our choice of $n$, the number of edges is such that Theorem \ref{thm:APmain} applies to give an independent $k$-term arithmetic progression, which is a rainbow arithmetic progression in our coloring of $[n]$. 
\end{proof}

Let $T_k$ denote the smallest positive integer $t$ such that for every positive integer $m$, every equinumerous $t$-coloring of $[tm]$ contains a rainbow $k$-term arithmetic progression. Jungi\'c et al.~\cite{JLMNR} proved that there are positive constants $c,c'$ such that $$c' k^2 \leq T_k \leq c k^3.$$ 
They conjectured that the lower bound is correct, that is, $T_k=\Theta(k^2)$, a problem which was reiterated in the survey~\cite{JNR}. 
Here we make progress on this conjecture, improving the upper bound to $c k^2 \log k$. Note that an equinumerous 
$t$-coloring of $[tm]$ uses each color exactly $m$  times, so $T_k$ is at most the maximum of $sr(m,k)/m$ over all positive integers $m$. Hence, by Corollary \ref{subRamsey}, we obtain the following corollary. 

\begin{corollary}
\label{T_k}
There is a constant $c$ such that
$$T_k \leq c k^2 \log k.$$
\end{corollary}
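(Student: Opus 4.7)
The proof I have in mind is essentially a one-line derivation from Corollary \ref{subRamsey}, since the connection between $T_k$ and $sr(m,k)$ is already spelled out in the text. The plan is first to justify the inequality $T_k \leq \max_m sr(m,k)/m$ and then to plug in the upper bound on $sr(m,k)$.

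For the inequality, fix a positive integer $t \geq \max_m sr(m,k)/m$. I want to show that for every $m$, every equinumerous $t$-coloring of $[tm]$ contains a rainbow $k$-term arithmetic progression. Given such a coloring, each of the $t$ colors is used exactly $m$ times, so in particular no color appears more than $m$ times. Since $t \geq sr(m,k)/m$, we have $tm \geq sr(m,k)$, so restricting the coloring to $[sr(m,k)] \subseteq [tm]$ (or applying the definition of $sr$ directly to $[tm]$) produces a rainbow $k$-term arithmetic progression. Hence $T_k \leq t$, and taking the infimum gives $T_k \leq \max_m sr(m,k)/m$.

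Now I apply Corollary \ref{subRamsey}, which gives a constant $c$ such that $sr(m,k) \leq c m k^2 \log k$ for every $m$. Dividing by $m$, we get $sr(m,k)/m \leq c k^2 \log k$ uniformly in $m$, so $T_k \leq c k^2 \log k$, as required.

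There is no genuine obstacle here: the substantive work was already done in Corollary \ref{subRamsey} (which in turn reduces to Theorem \ref{thm:APmain}). The only thing worth being careful about is the direction of the inequality between $T_k$ and $sr(m,k)/m$ and the fact that an equinumerous coloring automatically satisfies the ``no color used more than $m$ times'' hypothesis needed to invoke the definition of $sr$.
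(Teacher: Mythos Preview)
Your proof is correct and follows exactly the approach sketched in the paper: the inequality $T_k \leq \max_m sr(m,k)/m$ is noted in the text immediately before the corollary, and then Corollary~\ref{subRamsey} is applied. The paper gives no further argument beyond this, so there is nothing to add.
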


Motivated by the set mapping problem of Erd\H{o}s and Hajnal, Caro \cite{C87} proved that for every positive integer $k$, there is a minimum integer $n_0 = n_0(k)$ such that, for all $n \geq n_0$ and every permutation $\pi:[n] \to [n]$, there is a $k$-term arithmetic progression $A$ such that $\pi(i) \not \in A$ for all $i \in A$.  Moreover, he showed that there are constants $c, c' > 0$ such that $c' k^2/\log k \leq n_0(k) \leq k^2 2^{c \log k / \log \log k}$. Alon et al.~\cite{ACT} used the same methods they had used to bound $sr(m,k)$ to improve the earlier upper bound to $n_0(k) \leq c k^2 \log k$. Our result gives a simple alternative proof of this.

\begin{corollary}
There is a constant $c$ such that
$$n_0(k) \leq c k^2 \log k.$$
\end{corollary}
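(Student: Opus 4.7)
The plan is to reduce the corollary directly to Theorem~\ref{thm:APmain} by encoding the forbidden adjacencies of $\pi$ as a graph on $[n]$. Given $\pi:[n]\to[n]$, define a graph $G$ on vertex set $[n]$ by joining $i\neq j$ precisely when $\pi(i)=j$ or $\pi(j)=i$. Any independent set $A$ in $G$ then satisfies $\pi(i)\neq j$ for every pair of distinct $i,j\in A$, and hence (taking $\pi$ to be fixed-point-free, as is standard in the Erd\H os--Hajnal set-mapping framework in which Caro's problem lives) also satisfies $\pi(i)\notin A$ for every $i\in A$, which is exactly the condition required.

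The central quantitative point is the bound $e(G)\le n$: every edge of $G$ has the form $\{i,\pi(i)\}$ for some $i$ with $\pi(i)\neq i$, and there are at most $n$ such ordered pairs. Equivalently, $G$ has maximum degree at most $2$, since the only possible neighbors of $i$ are $\pi(i)$ and $\pi^{-1}(i)$. Setting $c=1/\varepsilon$, with $\varepsilon$ the constant provided by Theorem~\ref{thm:APmain}, for every $n\ge ck^2\log k$ we obtain
\[
e(G)\ \le\ n\ <\ \varepsilon\,\frac{n^2}{k^2\log k},
\]
so Theorem~\ref{thm:APmain} immediately delivers an independent $k$-term arithmetic progression $A$ in $G$, and this $A$ is the progression we want.

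There is essentially no serious obstacle: the entire argument rests on the trivial observation that a permutation of $[n]$ contributes only $O(n)$ forbidden edges, far below the $\Theta(n^2/(k^2\log k))$ edge threshold that Theorem~\ref{thm:APmain} can absorb. The only mild wrinkle is handling fixed points of $\pi$. If one wishes to allow $\pi$ to have some fixed points, this can be absorbed into the counting argument behind Theorem~\ref{thm:APmain}: each fixed point lies in only $O(n/(k\log k))$ members of the family $\mathcal{A}$ of arithmetic progressions used in the proof there, so as long as the number of fixed points is $O(n/k)$ the bad APs they generate can be lumped together with those killed by edges of $G$, and the same conclusion goes through with a slightly larger constant $c$.
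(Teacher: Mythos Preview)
Your argument is correct and essentially identical to the paper's: define the graph with edges $\{i,\pi(i)\}$, observe that it has at most $n$ edges, and apply Theorem~\ref{thm:APmain} with $c$ slightly larger than $1/\varepsilon$. The final paragraph about accommodating fixed points is superfluous and a little misleading---the problem is only well-posed for fixed-point-free $\pi$ (otherwise the identity permutation is a counterexample for every $n$), so once you have correctly noted that the fixed-point-free assumption is standard, there is nothing further to patch.
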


\begin{proof}
Consider the graph on $[n]$ with edges $(i,\pi(i))$ for $i \in [n]$. This graph has at most $n$ edges. By choosing $c$ large enough, we can make the number of edges such that Theorem \ref{thm:APmain} applies to give an independent arithmetic progression in this graph. This arithmetic progression has the required property.
\end{proof}

\section{Joints and cliques} \label{sec:joint}

Let $t_r(n)$ denote the number of edges in the balanced complete $r$-partite graph $T_{n,r}$ with $n$ vertices (so parts differ in size by at most one). Tur\'an's theorem \cite{Tu} says that the maximum possible number of edges in a $K_{r+1}$-free graph on $n$ vertices is $t_r(n)$, with equality if and only if the graph is $T_{n,r}$. Hence, a graph on $n$ vertices with at least $t_r(n)$ edges which is not $T_{n,r}$ must have at least one $K_{r+1}$. Must it have many $(r+1)$-cliques? Must there be an edge in many $(r+1)$-cliques? Questions of this sort have been studied since the early days of extremal graph theory. 

Rademacher, in unpublished work from 1950, proved that every graph on $n$ vertices with $t_2(n)+1$ edges contains at least $\lfloor n/2 \rfloor$ triangles, which is tight by adding an edge in a largest part of a balanced complete bipartite graph.  Erd\H{o}s \cite{Er62} extended this to graphs with a linear number of added edges and in \cite{Er62a} proved an analogous result for larger cliques. The general Erd\H{o}s--Rademacher problem, to determine how many copies of $K_{r+1}$ must be in every graph on $n$ vertices with $m$ edges, has a long and rich history (see, for instance, the recent paper \cite{KLPS} and its references).

An \emph{$r$-joint} is a collection of copies of $K_r$ in a graph that share a common edge. The \emph{joint number $j_r(G)$} is the size of the largest $r$-joint in a graph $G$. Erd\H{o}s \cite{Er62} proved that every graph $G$ on $n$ vertices with more than $t_2(n)$ edges satisfies $j_3(G) \geq n/6 - O(1)$ and conjectured that the $O(1)$ term can be removed. This conjecture, which is tight, was later proved by Edwards and, independently, Khad\v ziivanov and Nikiforov \cite{KN}. For $r>2$, the minimum value of $j_{r+1}(G)$ over all graphs $G$ on $n$ vertices with more than $t_r(n)$ edges is not well understood. Progress on this problem was made by Erd\H{o}s \cite{Er69} in 1969 and, more recently, by Bollob\'as and Nikiforov (see \cite{BN1,BN2,BN3}). 

Mubayi introduced and studied a natural problem which looks at the interplay between the two questions above. Namely, how many triangles must be in every $n$-vertex graph $G$ with more than $t_2(n)$ edges if no edge is in more than $j$ triangles? Denote this number by $t(n,j)$. Mubayi \cite{Mubayi} showed that $t(n,j)$ jumps from quadratic to cubic in $n$ when $j$ goes below the threshold $n/4$. In \cite{CFS}, we studied this problem in more detail and proposed a precise conjecture that states that if $n/6 \leq j \leq n/4$, then every graph on $n$ vertices with at least $\lfloor n^2/4 \rfloor$ edges and no edge in more than $j$ triangles which is not the Tur\'an graph $T_{n,2}$ has at least $j^2(n-4j)$ triangles. Moreover, we conjectured that equality holds if and only if the graph is $S_{j,n}$, a blow-up of the $3$-prism graph, the graph on six vertices which consists of two disjoint triangles with a perfect matching between them, with the parts corresponding to one of the matching edges of size $\lfloor (n-4j)/2\rfloor$ and $\lceil (n-4j)/2 \rceil$ and the other four parts of size $j$. We proved this conjecture at the two extremes, if $j=n/6$ or if $.2495n \leq j \leq n/4$. 

Here we consider the generalization of Mubayi's problem to larger cliques. This question was raised by Mubayi \cite{Mubayi} and again recently by the authors \cite{CFS}. Let $t(n,j,r)$ be the minimum number of copies of $K_{r}$ in every graph $G$ with $n$ vertices, more than $t_{r-1}(n)$ edges and $j_r(G) \leq j$. Extending Mubayi's result, we prove that $t(n,j,r+1)$ has a phase transition from $\Theta(n^{r})$ to $\Theta(n^{r+1})$ when $j \approx \left(\frac{r-1}{r^2}\right)^{r-1}n^{r-1}$. 

\begin{theorem}\label{mainthmjoint}
For each integer $r \geq 2$ and $\eta>0$, there is $\delta>0$ such that the following holds. Let $G$ be a graph on $n$ vertices with at least $t_r(n)$ edges. Then either $G$ is the Tur\'an graph $T_{n,r}$ or $G$ has at least $\delta n^{r+1}$ copies of $K_{r+1}$ or $G$ has an edge in at least $(1-\eta)\left(\frac{r-1}{r^2}\right)^{r-1}n^{r-1}$ copies of $K_{r+1}$. 
\end{theorem}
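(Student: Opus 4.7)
The plan is a stability reduction following Mubayi's approach for $r=2$. Suppose $G$ has $n$ vertices, at least $t_r(n)$ edges, $G\neq T_{n,r}$, and fewer than $\delta n^{r+1}$ copies of $K_{r+1}$; I need to exhibit an edge lying in at least $(1-\eta)\left(\frac{r-1}{r^2}\right)^{r-1}n^{r-1}$ copies of $K_{r+1}$.

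First I would invoke an Erd\H{o}s--Simonovits-type stability theorem: provided $\delta$ is small enough in terms of $\eta$, $G$ admits an $r$-partition $V_1,\dots,V_r$ with $|V_i|=n/r+o(n)$ and with a total of $o(n^2)$ within-part edges and cross-part non-edges (the $o(1)$'s going to zero as $\delta\to 0$). Choosing such a partition to minimise the number of within-part edges, a standard swap argument yields $|N(v)\cap V_i|\le|N(v)\cap V_j|$ for every $v\in V_i$ and every $j\neq i$. Since $e(G)\ge t_r(n)\ge\sum_{i<j}|V_i||V_j|$, with equality only when the parts are balanced and $G$ is complete multipartite (that is, $G=T_{n,r}$), and since $G\neq T_{n,r}$, the partition must contain at least one within-part edge, which I may assume lies in $V_1$.

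The core of the argument is then to produce a within-part edge $uv\subseteq V_1$ whose common neighbourhood $N(u)\cap N(v)$ contains many $(r-1)$-cliques transversal to $V_2,\dots,V_r$; each such clique extends $uv$ to a copy of $K_{r+1}$. Up to an error from the $o(n^2)$ missing cross-edges (bounded via inclusion--exclusion), the count of these cliques is essentially $\prod_{j\neq 1}|N(u)\cap N(v)\cap V_j|$. The sharp constant $\left(\frac{r-1}{r^2}\right)^{r-1}$ arises as the joint number of the interior edge in a ``prism-type'' extremal blow-up construction generalising the graphs $S_{j,n}$ of \cite{CFS}: in such a configuration, $u$ and $v$ share a common neighbourhood of size approximately $(r-1)|V_j|/r\approx(r-1)n/r^2$ in each $V_j$, $j\neq 1$, and the $r-1$ resulting factors multiply to give the stated bound.

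The main obstacle will be to actually attain this sharp constant, rather than the stronger but unachievable $(n/r)^{r-1}$ that a na\"\i ve density estimate would suggest---an interior edge $uv$ need not have each endpoint densely connected to every $V_j$. I plan to handle this by selecting $uv$ through a suitable averaging argument over within-part edges, weighted by $\prod_{j\neq 1}|N(u)\cap N(v)\cap V_j|$, and by exploiting the swap-optimality of the partition together with a convexity or Cauchy--Schwarz-type inequality to force the chosen edge to realise the required bound in each factor. Controlling the $o(n^2)$ error terms via inclusion--exclusion should then complete the proof.
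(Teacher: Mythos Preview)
Your overall stability framework matches the paper's, but the plan for extracting the sharp constant has a real gap. You propose to find the right within-part edge by ``averaging over within-part edges, weighted by $\prod_{j\neq 1}|N(u)\cap N(v)\cap V_j|$.'' The difficulty is that there may be only \emph{one} within-part edge, and nothing prevents one of its endpoints from being a low-degree or ``abnormal'' vertex with poorly controlled neighbourhoods in the other parts; in that situation there is nothing to average over, swap-optimality only gives $|N(u)\cap V_1|\le |N(u)\cap V_j|$ (no useful lower bound), and a convexity/Cauchy--Schwarz estimate will not manufacture the missing degree. Indeed, the paper observes (Lemma~\ref{firstkey}) that if a within-part edge had two \emph{normal} endpoints it would already lie in roughly $(n/r)^{r-1}$ copies of $K_{r+1}$, far more than needed; so under the hypothesis that no edge meets the joint bound, all within-part edges necessarily involve abnormal vertices, which is exactly the regime where your averaging idea has no traction.

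The paper resolves this not by averaging but by two devices you are missing. First, it performs cleanup moves that \emph{modify} the graph: any exceptional vertex of degree below $\sum_{i\ge 2}|V_i|$ is moved to the smallest part and made complete to the others (this only raises the edge count and kills its participation in $K_{r+1}$'s, so it is legitimate). After cleanup one is left with a small set $S$ of exceptional vertices, each of degree at least $(1-1/r-o(1))n$ and with a neighbour in every part; if $S=\emptyset$ then $G=T_{n,r}$. Second, for $v\in S$ it takes $w\in N(v)\cap V_{i_0}$ where $i_0$ \emph{minimises} $|N(v)\cap V_i|$, and then applies the following sharp optimisation (not Cauchy--Schwarz): if $0\le a_1\le\cdots\le a_r\le 1$ and $\sum_i a_i\ge r-1$, then $\prod_{i\ge 2}a_i\ge\big(\tfrac{r-1}{r}\big)^{r-1}$. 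With $a_i=|N(v)\cap V_i|/|V_r|$ this yields $\prod_{i\ne i_0}|N(v)\cap N(w)\cap V_i|\ge (1-o(1))\big(\tfrac{r-1}{r^2}\big)^{r-1}n^{r-1}$, which is the sharp constant. You should isolate this inequality explicitly and replace the averaging step by the cleanup-then-pick-the-smallest-part argument.
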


A weaker result on this problem was proved earlier by Allen et al. (\cite{ABHP}, Lemma 8), who used it to study Tur\'an-type problems in random hypergraphs.

To see that the bound on the joint number in Theorem \ref{mainthmjoint} is asymptotically tight, consider a graph $G=G_{n,r}$ on $n$ vertices with vertex partition $V(G)=V_0 \cup V_1 \cup \ldots \cup V_r$, where $V_0=\{v\}$ consists of a single vertex, all other parts are of size $\lfloor \frac{n-1}{r}\rfloor$ or $\lceil \frac{n-1}{r}\rceil$, there is a complete bipartite graph between $V_i$ and $V_j$ for all $1 \leq i < j \leq r$ and $v$ is adjacent to $\lfloor \frac{r-1}{r^2}(n-1) \rfloor$ or $\lceil \frac{r-1}{r^2}(n-1) \rceil$ vertices in each part so that the total number of edges is $t_r(n)$.  For simplicity of the analysis, assume $n \equiv 1 \pmod{r^2}$, so the number of copies of $K_{r+1}$ in $G$ is $\left(\frac{r-1}{r^2}\right)^r(n-1)^r$ and the joint number is $j_{r+1}(G)=\left(\frac{r-1}{r^2}\right)^{r-1}(n-1)^{r-1}$. Observe that one can add $o(n)$ additional edges from $v$ to get more than $t_r(n)$ edges, while the number of copies of $K_{r+1}$ and the joint number $j_{r+1}(G)$ will be asymptotically unchanged. 

More generally, we can define graphs $G=G_{n,r}(s)$ for which $G_{n,r}$ is the case $s=1$ and such that as we  increase $s$ from $1$, the joint number $j_{r+1}(G)$ decreases, but the number of copies of $K_{r+1}$ increases. It also generalizes the construction that is conjectured by the authors in \cite{CFS} to be tight for $r=2$ and may give an asymptotically tight bound for general $r$. The graph $G$ has $n$ vertices and a vertex partition into $r+1$ parts $V_0,V_1,\dots,V_r$, with $V_0$ of size $s$ and all the other parts of size $\lfloor \frac{n-s}{r} \rfloor$ or $\lceil \frac{n-s}{r} \rceil$. Each $V_i$ for $0 \leq i \leq r$ has an equitable partition into $r$ parts, $V_i=V_{i,1} \cup \dots \cup V_{i,r}$. The induced subgraph of $G$ on $V(G) \setminus V_0$ is complete $r$-partite with parts $V_1,\dots,V_r$. The induced subgraph of $G$ on $V_0$ is a balanced complete $r$-partite graph with parts $V_{0,1},V_{0,2},\dots,V_{0,r}$. For $1 \leq i \leq r$, we have $V_{0,j}$ complete to $V_{i,j'}$ if $j \not = j'$ and otherwise $V_{0,j}$ empty to $V_{i,j}$. 

To prove Theorem \ref{mainthmjoint} we need several lemmas. The first one is the celebrated graph removal lemma, which shows that if a graph has relatively few copies of a graph $H$, then it can be made $H$-free by removing a relatively small number of edges. This lemma was already used by Mubayi, who also remarked that it can be applied to study the case of larger cliques.

\begin{lemma}[Graph Removal Lemma] \label{grl}
For each $\epsilon>0$ and graph $H$ on $k$ vertices, there is $\delta>0$ such that every graph with $n$ vertices and at most $\delta n^k$ copies of $H$ can be made $H$-free by removing at most $\epsilon n^2$ edges. 
\end{lemma}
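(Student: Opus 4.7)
The plan is to prove the removal lemma via the standard route combining Szemer\'edi's regularity lemma with a counting (embedding) lemma for regular pairs. Three parameters will be in play: a regularity parameter $\epsilon'$, a density threshold $d$, and the promised constant $\delta$, chosen so that $\delta \ll \epsilon' \ll d \ll \epsilon, 1/k$.

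First, I would apply Szemer\'edi's regularity lemma to $G$ with parameter $\epsilon'$, obtaining an equipartition $V_1,\dots,V_M$ of $V(G)$ into parts of size $n/M$, with $M$ bounded in terms of $\epsilon'$ alone, and with all but at most $\epsilon' M^2$ of the pairs $(V_i,V_j)$ being $\epsilon'$-regular. Then I would \emph{clean} $G$ by deleting every edge that (i) lies inside some part $V_i$, (ii) lies between a pair that is not $\epsilon'$-regular, or (iii) lies between a regular pair of density less than $d$. A direct count bounds the total number of deleted edges by
\[ M\binom{n/M}{2} + \epsilon' M^2 (n/M)^2 + \binom{M}{2} d (n/M)^2 \;\le\; \frac{n^2}{M} + \epsilon' n^2 + \frac{d n^2}{2}, \]
which is at most $\epsilon n^2$ if $M$ is taken large, and $\epsilon'$ and $d$ small, as functions of $\epsilon$ alone.

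Next, I would argue that the cleaned graph $G'$ is $H$-free, provided $\delta$ is small enough. Suppose for contradiction $G'$ contains a copy of $H$. Because intra-part edges were deleted, its $k$ vertices lie in $k$ distinct parts $V_{i_1},\dots,V_{i_k}$, and for every edge $uv$ of $H$ the pair $(V_{i_u},V_{i_v})$ is $\epsilon'$-regular with density at least $d$. The counting lemma for regular pairs then guarantees at least $c(d,k)(n/M)^k$ copies of $H$ in $G$ with $u$-th vertex in $V_{i_u}$, for a positive constant $c(d,k)$, provided $\epsilon'$ is small enough in terms of $d$ and $1/k$. Choosing $\delta < c(d,k)/M^k$ contradicts the hypothesis that $G$ has at most $\delta n^k$ copies of $H$, so $G'$ is indeed $H$-free.

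The main obstacle is the counting step invoked above. Its proof proceeds by embedding the vertices of $H$ one at a time into the regular structure: after placing the first $j$ vertices, the candidates for the $(j{+}1)$-st vertex form a common neighborhood across several regular pairs, and regularity implies this neighborhood has nearly the expected size for all but a $O(k\epsilon')$ fraction of earlier choices. Iterating this extension yields the $(n/M)^k$ lower bound with a constant depending only on $d$ and $k$. This is the technical heart of the argument, originally due to Ruzsa and Szemer\'edi, and is by now a standard ingredient of the regularity method.
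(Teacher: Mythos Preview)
Your proof is correct and follows the standard route via Szemer\'edi's regularity lemma and the counting lemma, exactly as originally done by Ruzsa and Szemer\'edi. However, there is nothing to compare against: the paper does not prove Lemma~\ref{grl} at all. It is stated as the ``celebrated graph removal lemma'' and used as a black box in the derivation of Corollary~\ref{corclose}. The paper even remarks that invoking the removal lemma gives a poor dependence of $\delta$ on $\epsilon$, and mentions that alternative proofs of Corollary~\ref{corclose} with polynomial dependence exist (via the Moon--Moser inequalities or a sampling argument), but does not supply those either. So your write-up is a correct supplementary proof of a result the authors simply quote.
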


We will also use the following stability result of F\"uredi. 

\begin{lemma}[F\"uredi~\cite{Fur}] \label{Furlem} 
Any $K_{r+1}$-free graph on $n$ vertices with $t_r(n)-m$ edges can be made $r$-partite by removing at most $m$ edges. 
\end{lemma}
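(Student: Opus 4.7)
The plan is to establish the lemma by taking an $r$-partition of $V(G)$ that is locally optimal and then exploiting the $K_{r+1}$-free hypothesis to upgrade the trivial max-cut bound to the sharp bound $m = t_r(n) - e(G)$.

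First, I would choose an $r$-partition $V_1,\ldots,V_r$ of $V(G)$ that minimizes the number of edges lying inside the classes. Write $B$ for this number of within-class edges, so that the number of cross edges is $e(G) - B$. By local optimality, each vertex $v \in V_i$ has at least as many neighbors in any other $V_j$ as in its own class, for otherwise moving $v$ to that $V_j$ would contradict our choice. This immediately gives the weak bound $B \leq e(G)/r$, which is too large on its own. The goal is to use $K_{r+1}$-freeness to replace it by $B \leq t_r(n) - e(G)$.

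Next, I would compare $G$ to the complete $r$-partite graph $H$ on the parts $V_1,\ldots,V_r$. Since any complete $r$-partite graph on $n$ vertices has at most $t_r(n)$ edges, we get $e(H) \leq t_r(n)$, so the number of cross pairs not realized as edges of $G$ is at most $t_r(n) - (e(G) - B) = m + B$. The proof therefore reduces to an injective charging argument showing that at least $2B$ cross non-edges of $G$ exist, which together with the previous inequality yields $2B \leq m + B$, i.e., $B \leq m$. For the charging, each within-class edge $xy$ (say $x, y \in V_i$) should be paired with two cross non-edges. The $K_{r+1}$-free condition enters precisely here: the common neighborhood $N(x) \cap N(y)$ is $K_{r-1}$-free, and combined with local optimality (which says $x$ and $y$ have few neighbors in $V_i$ and many in each other class) this should force enough missing cross edges from $x$ and $y$.

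The step I expect to be the main obstacle is this charging argument, i.e., turning the structural facts (local optimality and $K_{r+1}$-freeness) into an exact injective assignment of within-class edges to pairs of missing cross edges. A natural way around this obstacle, and the route taken in F\"uredi's original argument, is to combine the above with a Zykov-style symmetrization: among all $K_{r+1}$-free graphs with the same vertex partition and at least as many edges as $G$, pick one for which non-adjacency is an equivalence relation; the resulting extremal graph is complete $r$-partite, for which the bound is immediate, and one then checks that symmetrization only increases $e(G) + B$. If this direct symmetrization is too cumbersome, a fallback would be induction on $n$: delete a vertex $v$ of low degree in $G$, apply the inductive hypothesis to $G - v$ (which remains $K_{r+1}$-free), and reinsert $v$ into the class of fewest neighbors, checking that the telescoping increments of $t_r(n) - t_r(n-1)$ absorb the new within-class edges.
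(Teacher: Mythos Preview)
The paper does not actually prove this lemma; it is quoted from F\"uredi's paper \cite{Fur} and used as a black box. F\"uredi's own argument is quite different from your plan: he inducts on $r$, not on $n$, and does not use a max-cut partition at all. One picks a vertex $x$ of maximum degree $\Delta$; since $G$ is $K_{r+1}$-free, the induced subgraph $G[N(x)]$ is $K_r$-free, and by induction it can be made $(r-1)$-partite by deleting at most $t_{r-1}(\Delta)-e(G[N(x)])$ edges. Declaring $V\setminus N(x)$ to be the $r$-th class, the total number of deleted edges is at most $e(G[V\setminus N(x)])+t_{r-1}(\Delta)-e(G[N(x)])$. Summing degrees over $V\setminus N(x)$ gives $2e(G[V\setminus N(x)])+e(N(x),V\setminus N(x))\le (n-\Delta)\Delta$, and combining this with $e(G)=e(G[N(x)])+e(G[V\setminus N(x)])+e(N(x),V\setminus N(x))$ shows the deletion count is at most $(n-\Delta)\Delta+t_{r-1}(\Delta)-e(G)\le t_r(n)-e(G)=m$, the last inequality because a complete $r$-partite graph on $n$ vertices with one part of size $n-\Delta$ has at most $t_r(n)$ edges.

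Your proposal, by contrast, is not yet a proof: the entire weight rests on the claim that a locally optimal $r$-partition of a $K_{r+1}$-free graph has at least $2B$ missing cross edges, and you do not supply an argument for this. The two suggested escape routes are also only gestured at. For the symmetrization route, the assertion that ``symmetrization only increases $e(G)+B$'' is not obvious and would itself need a proof tied to the fixed partition; Zykov symmetrization as usually stated changes the graph globally and says nothing about a pre-chosen max-cut. For the induction on $n$, deleting a minimum-degree vertex and reinserting it is a reasonable idea, but you would need to verify that the increment $t_r(n)-t_r(n-1)$ really dominates the new within-class edges created, and this bookkeeping is exactly where the $K_{r+1}$-freeness must be invoked --- you have not done so. In short, the reduction to ``cross non-edges $\ge 2B$'' is a plausible target, but as written the proposal has a genuine gap at its central step, and F\"uredi's induction on $r$ via a maximum-degree vertex is both shorter and avoids the charging difficulty entirely.
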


We have the following corollary of Lemmas \ref{grl} and \ref{Furlem} (replacing $\epsilon$ by $\epsilon/2$ in the application of the graph removal lemma, Lemma \ref{grl}). It gives a stability version of Tur\'an's theorem, showing that any graph for which the number of edges is at least the extremal number for a clique $K_{r+1}$ either contains many $K_{r+1}$ or is close to being $r$-partite. 

\begin{corollary}\label{corclose} For each $\epsilon>0$ and positive integer $r$, there is $\delta>0$ such that if a graph on $n$ vertices with at least $t_r(n)$ edges has at most $\delta n^{r+1}$ copies of $K_{r+1}$, then it can be made $r$-partite by removing at most $
\epsilon n^2$ edges. 
\end{corollary}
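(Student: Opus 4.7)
The plan is to combine the two lemmas in the obvious way, essentially following the parenthetical hint in the statement. Let $\epsilon > 0$ and $r$ be given. First I would invoke the graph removal lemma (Lemma \ref{grl}) with the graph $H = K_{r+1}$ (on $r+1$ vertices) and parameter $\epsilon/2$ in place of $\epsilon$, which produces some $\delta > 0$ such that any $n$-vertex graph with at most $\delta n^{r+1}$ copies of $K_{r+1}$ can be made $K_{r+1}$-free by deleting at most $(\epsilon/2) n^2$ edges. This $\delta$ will be the one we take for the corollary.

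Now let $G$ be any $n$-vertex graph with $e(G) \geq t_r(n)$ and at most $\delta n^{r+1}$ copies of $K_{r+1}$. The first step is to delete at most $(\epsilon/2) n^2$ edges from $G$ to obtain a $K_{r+1}$-free graph $G'$; this is possible by the choice of $\delta$. In particular,
\[
e(G') \geq e(G) - \frac{\epsilon}{2} n^2 \geq t_r(n) - \frac{\epsilon}{2} n^2.
\]
Since $G'$ is $K_{r+1}$-free, Tur\'an's theorem gives $e(G') \leq t_r(n)$, so if we set $m := t_r(n) - e(G')$ then $0 \leq m \leq (\epsilon/2) n^2$.

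The second step is to apply F\"uredi's stability lemma (Lemma \ref{Furlem}) to $G'$: since $G'$ is $K_{r+1}$-free with $t_r(n) - m$ edges, it can be made $r$-partite by removing at most $m \leq (\epsilon/2) n^2$ further edges. Combining the two stages, $G$ itself can be made $r$-partite by removing at most $(\epsilon/2) n^2 + (\epsilon/2) n^2 = \epsilon n^2$ edges, which is what we needed.

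There is no serious obstacle here; the only point to check is that Tur\'an's bound forces $e(G') \leq t_r(n)$, so the quantity $m$ fed into F\"uredi's lemma is nonnegative and bounded by $(\epsilon/2) n^2$. The two $\epsilon/2$ budgets then add cleanly to $\epsilon n^2$.
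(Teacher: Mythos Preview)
Your proof is correct and is exactly the argument the paper has in mind: apply the removal lemma with $\epsilon/2$ to pass to a $K_{r+1}$-free subgraph, then use F\"uredi's stability lemma to make that subgraph $r$-partite, with the two $\epsilon/2$ budgets combining to $\epsilon n^2$. The only detail you add beyond the paper's one-line sketch is the explicit check (via Tur\'an's theorem) that the deficit $m$ fed into Lemma~\ref{Furlem} is nonnegative and at most $(\epsilon/2)n^2$, which is fine.
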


The proof of Corollary \ref{corclose} described above gives a poor bound for $\delta$ in terms of $\epsilon$ due to the application of the  graph removal lemma. Alternative proofs with a much better, polynomial dependence are known, such as by using the Moon--Moser inequalities (see, e.g., Section 4.3 of Shapira's lecture notes \cite{Shapira}) or through a sampling argument. 

We will also make repeated use of the following lemma, which says that if you decrease the sum of some positive numbers by a small amount and do not increase any of the numbers, then, to minimize the product of the numbers, only the smallest of the numbers should be decreased. 

\begin{lemma}\label{productdecrease}
Suppose $0\leq x_1 \leq x_2 \leq \ldots \leq x_s$ and $0 \leq y_i \leq x_i$ for $1 \leq i \leq s$. 
If $\alpha:=\sum_{i=1}^s (x_i-y_i)$, then $\prod_{i=1}^s y_i \geq (x_1-\alpha)\prod_{i=2}^s x_i$. 
\end{lemma}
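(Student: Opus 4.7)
The plan is to dispose of a few degenerate cases and then reduce the inequality to a simple linearization. First, I would observe that whenever the right-hand side is non-positive the claim is automatic: if some $x_j=0$ for $j\geq 2$, or if $\alpha\geq x_1$ (which also covers $x_1=0$), then $(x_1-\alpha)\prod_{i=2}^s x_i\leq 0$, while $\prod_{i=1}^s y_i\geq 0$. So the only interesting case is when $x_i>0$ for every $i$ and $\alpha<x_1$, and from here on I would restrict attention to that regime.

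In this regime, set $\epsilon_i=(x_i-y_i)/x_i\in[0,1]$, so that $y_i=x_i(1-\epsilon_i)$ and $\sum_{i=1}^s x_i\epsilon_i=\alpha$. The argument then rests on two elementary inequalities. Since $x_i\geq x_1$ for every $i$,
\[
\sum_{i=1}^s\epsilon_i \;\leq\; \frac{1}{x_1}\sum_{i=1}^s x_i\epsilon_i \;=\; \frac{\alpha}{x_1}.
\]
The Weierstrass product inequality $\prod_{i=1}^s(1-\epsilon_i)\geq 1-\sum_{i=1}^s\epsilon_i$, valid for $\epsilon_i\in[0,1]$ by a one-line induction starting from $(1-a)(1-b)\geq 1-a-b$, then gives
\[
\prod_{i=1}^s y_i \;=\; \Big(\prod_{i=1}^s x_i\Big)\prod_{i=1}^s(1-\epsilon_i) \;\geq\; \Big(\prod_{i=1}^s x_i\Big)\Big(1-\frac{\alpha}{x_1}\Big) \;=\; (x_1-\alpha)\prod_{i=2}^s x_i.
\]

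I do not expect any real obstacle; the only bookkeeping is making sure the degenerate boundary cases are swept away before passing to the normalization, which is why I would isolate them at the start. A perfectly viable alternative is induction on $s$: split $\alpha=\alpha_1+\alpha'$ with $\alpha_1=x_1-y_1$ and $\alpha'=\sum_{i\geq 2}(x_i-y_i)$, invoke the inductive bound $\prod_{i\geq 2}y_i\geq(x_2-\alpha')\prod_{i\geq 3}x_i$, and verify by direct expansion that $(x_1-\alpha_1)(x_2-\alpha')\geq(x_1-\alpha)x_2$, where the slack $\alpha'\bigl[(x_2-x_1)+\alpha_1\bigr]\geq 0$ comes from the ordering $x_2\geq x_1$. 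Either route is short, but the normalization feels the cleanest.
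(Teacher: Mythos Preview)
Your proof is correct. The paper's argument is a brief optimization sketch: since the partial derivative of $\prod_i x_i$ with respect to $x_i$ equals $\prod_{j\neq i}x_j$, this derivative is largest for $i=1$, so moving all of the deficit $\alpha$ onto the smallest coordinate minimizes the product. Your route is genuinely different and more self-contained: after isolating the degenerate boundary cases, you normalize via $\epsilon_i=(x_i-y_i)/x_i$, use the ordering $x_i\geq x_1$ to convert the weighted constraint into an unweighted bound $\sum_i\epsilon_i\leq\alpha/x_1$, and then apply the Weierstrass product inequality. The paper's approach is conceptually transparent about \emph{why} the extremum sits where it does, but as written it is a heuristic (one would need to argue, say via a local exchange, that the minimizer over the box has the claimed form). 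Your argument trades that optimization picture for a direct algebraic inequality, and in doing so is fully rigorous without any appeal to extremizers; the inductive alternative you sketch also works and is closest in spirit to a rigorous version of the paper's exchange idea.
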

\begin{proof}
As the derivative of $\prod_{i=1}^s x_i$ with respect to $x_i$ is the product of the other variables, the derivative is maximized when $i=1$. To minimize the product, we should thus delete as much from $x_1$ as possible, establishing the lemma. 
\end{proof}

The next lemma shows that, for a graph $G$ on $n$ vertices with at least $t_r(n)$ edges, if in addition to assuming $G$ has few copies of $K_{r+1}$, we also assume no edge is in too many $K_{r+1}$, then there is an almost spanning $r$-partite  induced subgraph which is nearly balanced and nearly complete. 

\begin{lemma}\label{firstkey} For each $\epsilon>0$ and positive integer $r$, there is $\delta>0$ such that if a graph $G$ on $n$ vertices with at least $t_r(n)$ edges has at most $\delta n^{r+1}$ copies of $K_{r+1}$ and each edge is in at most  $\left(1-c\right)\left(\frac{n}{r}\right)^{r-1}$ copies of $K_{r+1}$ where $c=2(r^2+r)\epsilon^{1/2}$, then there is a set $S$ of at most $\epsilon^{1/2}n$ vertices whose deletion makes the remaining induced subgraph $r$-partite with each part of size at most $\frac{n}{r}+2\epsilon^{1/2}n$ and at most $\epsilon n^2$ missing edges between the parts.
\end{lemma}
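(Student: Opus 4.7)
The plan is to start from the stability statement of Corollary~\ref{corclose} and upgrade it, using the joint hypothesis, from ``$r$-partite after deleting few \emph{edges}'' to ``$r$-partite after deleting few \emph{vertices}''. First I would apply Corollary~\ref{corclose} with a parameter $\epsilon_1$ much smaller than $\epsilon$ (concretely $\epsilon_1 \leq \epsilon/(4r)$, with further shrinkage for the error term below) and take $\delta$ to be the resulting constant. This produces an $r$-partition $V_1,\dots,V_r$ of $V(G)$ with $m \leq \epsilon_1 n^2$ edges inside parts, so the number $E_b$ of between-part edges satisfies $E_b \geq t_r(n) - \epsilon_1 n^2$.

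Next, using the elementary inequality $\sum_i |V_i|^2 \geq n^2/r + \beta^2 r/(r-1)$ that holds whenever some $|V_i|$ deviates from $n/r$ by $\beta$, combined with $E_b \leq \sum_{i<j}|V_i||V_j| = (n^2 - \sum|V_i|^2)/2$, one gets $\beta^2 \leq 2(r-1)\epsilon_1 n^2/r + O(n)$ and hence $|V_i| \in [n/r - 2\epsilon^{1/2} n,\, n/r + 2\epsilon^{1/2} n]$ for every $i$. The same comparison bounds the total number $M := \sum_{i<j}|V_i||V_j| - E_b$ of missing between-part edges by $\epsilon_1 n^2 + O(n) \leq \epsilon n^2$. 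Both the part-size bound and the missing-edge bound in the conclusion follow from these two facts, since passing to $V_i \setminus S$ only decreases each quantity.

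For the set $S$ itself I would set $\tau = \epsilon^{1/2} n / (2r)$ and take
\[
 S = \bigl\{\, v \in V_i : |V_j \setminus N(v)| > \tau \text{ for some } j \neq i \,\bigr\}.
\]
A double-counting argument on the pairs $(v,j)$ with $|V_j \setminus N(v)| > \tau$ gives $|S| \leq 2M/\tau \leq 4r\epsilon_1 n/\epsilon^{1/2} \leq \epsilon^{1/2} n$. The only remaining task, and the only place the joint hypothesis is used, is to verify that no edge inside a part survives the deletion of $S$. Suppose for contradiction $u,v \in V_i \setminus S$ with $uv \in E(G)$. Since $u,v \notin S$, for every $j \neq i$ we have $|N(u) \cap N(v) \cap V_j| \geq |V_j| - 2\tau \geq (n/r)\bigl(1 - (2r+1)\epsilon^{1/2}\bigr)$. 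Counting copies of $K_{r+1}$ containing $uv$ by choosing one vertex from each $N(u) \cap N(v) \cap V_j$ and discarding tuples with a non-edge between parts yields at least
\[
 \prod_{j\neq i}|N(u) \cap N(v) \cap V_j| - M(n/r)^{r-3} \geq (n/r)^{r-1}\bigl[1 - (r-1)(2r+1)\epsilon^{1/2} - \epsilon_1 r^2\bigr].
\]
Since $2r(r+1) - (r-1)(2r+1) = 3r+1 > 0$, choosing $\epsilon_1$ sufficiently small makes the bracketed quantity exceed $1 - 2(r^2+r)\epsilon^{1/2} = 1-c$, contradicting the joint hypothesis.

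The main obstacle will be the constant matching in this final step. The naive lower bound $|V_j| \geq n/r - 2(r-1)\epsilon^{1/2} n$ that one gets by pigeonholing the upper bound $|V_j| \leq n/r + 2\epsilon^{1/2} n$ is too weak for $r \geq 3$; one really needs the sharper $|V_j| \geq n/r - 2\epsilon^{1/2} n$ coming from the Cauchy--Schwarz argument above. With that in hand, the algebraic identity $2r(r+1) - (r-1)(2r+1) = 3r+1$ provides a margin that absorbs both the $\epsilon_1 r^2$ overcount from non-adjacent between-part tuples and the $O(n)$ lower-order errors, yielding the contradiction uniformly for every $r \geq 2$.
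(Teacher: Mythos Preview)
Your proposal is correct and follows essentially the same route as the paper: apply the stability corollary to get an $r$-partition with few bad edges, use a convexity computation on $\sum |V_i|^2$ to force each part close to $n/r$, define $S$ as the vertices with too many missing neighbours across parts, and derive a contradiction from the joint hypothesis for any surviving intra-part edge. The only cosmetic differences are that the paper applies Corollary~\ref{corclose} with $\epsilon$ itself rather than a smaller $\epsilon_1$, uses a \emph{total} (rather than per-part) threshold $2\sqrt{\epsilon}\,n$ to define abnormal vertices, and invokes Lemma~\ref{productdecrease} for the product lower bound where you argue factor by factor.
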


\begin{proof}
Let $\delta$ be as in Corollary \ref{corclose} or $\epsilon^{r+1}$, whichever is smaller. If $\epsilon < 1/n$, then from the assumption of the lemma, the graph $G$ has at most $\delta n^{r+1}<1$ copies of $K_{r+1}$ and hence is $K_{r+1}$-free and, therefore, must be the Tur\'an graph. We could then take $S$ to be the empty set. So we may assume $\epsilon \geq 1/n$.  

By Corollary \ref{corclose}, $G$ has a subgraph $G'$ formed by removing  at most $\epsilon n^2$ edges which is $r$-partite. 
Let $U_1,\ldots,U_{r}$ be the $r$ parts. Since the number of possible edges across the $r$ parts is at  most $t_r(n)$, there are at most $\epsilon n^2$ nonadjacent pairs of vertices across the parts. 

Let $c'=2\sqrt{\epsilon}$. Call a vertex $v \in U_i$ of $G$ {\it normal} if it is adjacent to all but at most $c'n$ vertices in $V(G) \setminus U_i$ and {\it abnormal} otherwise. So the total number of abnormal vertices is at most $2\cdot \epsilon n^2/(c'n)=(2\epsilon/c')n=\epsilon^{1/2}n$. Let $S$ be the set of abnormal vertices, so that $U_i':=U_i \setminus S$ is the set of normal vertices in $U_i$. It suffices now to prove that each $U_i'$ is an independent set (the fact that it has the correct size will fall out in the next paragraph as a byproduct of our argument). 

Suppose for the sake of contradiction that there is an adjacent pair $u,v \in U_i'$ for some $i$. Then $u,v$ are adjacent to all but at most $2c'n$ vertices not in $U_i$. We have $\left||U_j|-\frac{n}{r}\right| \leq 2\sqrt{\epsilon}n$ for each $j$, as otherwise the total number of edges of $G$ is at most 
\begin{eqnarray*} \epsilon n^2+{n \choose 2}-\sum_{j=1}^{r}{|U_j| \choose 2} & = & \left(\frac{1}{2}-\frac{1}{2r}+\epsilon \right)n^2-\frac{1}{2}\sum_{j=1}^{r}\left(|U_j|-\frac{n}{r}\right)^2<\left(\frac{1}{2}-\frac{1}{2r}-\epsilon \right)n^2 \\ & \leq & \frac{r-1}{2r} n^2-n < t_r(n),
\end{eqnarray*} 
contradicting that $G$ has at least $t_r(n)$ edges. The last inequality giving a lower bound on the number of edges of the  balanced complete $r$-partite graph on $n$ vertices follows from a straightforward computation. 

We next provide a lower bound on the number of $K_{r+1}$ that $(u,v)$ is in by giving a lower bound on the number of such cliques with one vertex in each $U_j$ with $j \not = i$. If these $r-1$ vertex subsets were complete to each other, we would get $\prod_{j \not = i} |U_j \cap N(u) \cap N(v)|$ such cliques. Using that $\left||U_j|-\frac{n}{r}\right| \leq 2\sqrt{\epsilon}n$  for each $j$ and 
$$\sum_{j \not = i} |U_j \cap N(u) \cap N(v)| \geq \sum_{j \not =i} |U_j| -2c'n = \sum_{j \not =i} |U_j| -4\sqrt{\epsilon}n,$$ 
this product is at least 

\begin{equation}\label{productbound} \left(\frac{n}{r}- 6\sqrt{\epsilon}n\right) \cdot \left(\frac{n}{r}-2\sqrt{\epsilon}n\right)^{r-2},\end{equation}
which follows from Lemma \ref{productdecrease} by taking $s=r-1$, $x_j=|U_j|$ and $y_j= |U_j \cap N(u) \cap N(v)|$ and relabeling so that the labels are $1,\ldots,r-1$ and the $x_j$ are in increasing order.

Relabel again so that $y_1,\ldots,y_{r-1}$ are in increasing order. For each pair of vertices, one in some $U_j \cap N(u) \cap N(v)$ and the other in $U_{j'} \cap N(u) \cap N(v)$, the number of $(r-1)$-tuples they are in with one vertex in each  $U_{\ell} \cap N(u) \cap N(v)$  with $\ell \not = i,j,j'$ is 
$\prod_{\ell \not = i,j,j'} |U_{\ell} \cap N(u) \cap N(v)|=\prod_{\ell \not = i,j,j'} y_{\ell}$. Let $m$ denote the number of nonadjacent pairs of vertices with one vertex in 
some $U_{j} \cap N(u) \cap N(v)$ and the other in some $U_{j'} \cap N(u) \cap N(v)$ with $j,j',i$ distinct. So
 $m \leq \epsilon n^2$. 

Summing over all $m$ nonadjacent pairs with vertices in different parts, the number of $(r-1)$-tuples with one vertex in each $U_{\ell} \cap N(u) \cap N(v)$ with $\ell \not = i$ which is not a clique is at most $m\prod_{j=3}^{r-1}y_{j}$. Thus, the number of cliques $K_{r+1}$ that $(u,v)$ is in with one vertex in each $U_j$ with $j \not = i$ is at least \begin{eqnarray*} \left(y_1y_2-m\right)\prod_{\ell=3}^{r-1} y_{\ell} & = &  \left(1-\frac{m}{y_1y_2}\right) \prod_{\ell=1}^{r-1} y_{\ell} \geq \left(1-\frac{m}{y_1y_2}\right)\left(\frac{n}{r}- 6\sqrt{\epsilon}n\right) \left(\frac{n}{r}-2\sqrt{\epsilon}n\right)^{r-2} \\ & \geq & \left(\left(\frac{n}{r}- 6\sqrt{\epsilon}n\right) \left(\frac{n}{r}- 2\sqrt{\epsilon}n\right)-m \right) \cdot \left(\frac{n}{r}- 2\sqrt{\epsilon}n\right)^{r-3},
\end{eqnarray*} where the first inequality is by (\ref{productbound}) and the second inequaity uses $y_1y_2 \geq  \left(\frac{n}{r}- 6\sqrt{\epsilon}n\right) \left(\frac{n}{r}- 2\sqrt{\epsilon}n\right)$. This lower bound on the number of $K_{r+1}$ containing the pair $u,v$
is more than
$$\left(1-2(r^2+r)\sqrt{\epsilon}\right)\left(\frac{n}{r}\right)^{r-1}=(1-c)\left(\frac{n}{r}\right)^{r-1},$$
contradicting the assumption of the lemma and completing the proof.
\end{proof}

We will also use a few simple inequalities involving real numbers in $[0,1]$. 

\begin{lemma}
Suppose $0 \leq a_1 \leq a_2 \leq \ldots \leq a_r \leq 1$ and $\sum_i a_i \geq r-1$. Then $\prod_{i=2}^r a_i \geq \left(\frac{r-1}{r}\right)^{r-1}$ with equality if and only if all $a_i$ are equal to $(r-1)/r$.
\end{lemma}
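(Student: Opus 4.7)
The plan is to reduce the inequality to minimizing a strictly concave function over a finite-dimensional polytope and then locate the minimizer among the polytope's extreme points.

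First I would observe that $\prod_{i=2}^{r} a_i$ does not involve $a_1$, while the constraints on $a_1$ read $\max(0,\,r-1-\sum_{i\geq 2} a_i) \le a_1 \le a_2$. A valid $a_1$ therefore exists precisely when $2 a_2 + a_3 + \cdots + a_r \ge r-1$. The lemma thus reduces to showing $\prod_{i=2}^r a_i \ge ((r-1)/r)^{r-1}$ on the compact convex polytope
\[
\mathcal{P} = \bigl\{(a_2, \dots, a_r) : 0 \le a_2 \le a_3 \le \cdots \le a_r \le 1,\ 2 a_2 + a_3 + \cdots + a_r \ge r - 1\bigr\}.
\]
Since $\log P = \sum_{i=2}^r \log a_i$ is strictly concave on the positive orthant, it attains its minimum on $\mathcal{P}$ uniquely at an extreme point.

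Next I would enumerate the extreme points of $\mathcal{P}$. At each one, $r-1$ linearly independent constraints from among the $r-2$ adjacency inequalities, the boundary inequalities $a_2 \ge 0$ and $a_r \le 1$, and the sum bound must be active. A short case analysis shows that the only feasible extreme points are the ``two-block'' tuples
\[
a_2 = \cdots = a_k = \tfrac{k-1}{k}, \qquad a_{k+1} = \cdots = a_r = 1 \qquad (k \in \{2, \dots, r-1\}),
\]
together with the fully balanced tuple $a_2 = \cdots = a_r = (r-1)/r$ (corresponding to $k = r$) and the all-ones tuple; any candidate with some $a_i = 0$ is ruled out because the sum would then be at most $r - 2 < r - 1$.

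Finally, at the $k$-th extreme point the product evaluates to $((k-1)/k)^{k-1}$ (or $1$ for the all-ones tuple), and this is strictly decreasing in $k$: its logarithm $(k-1) \log(1-1/k)$ has derivative (in $k$) equal to $\log(1-1/k) + 1/k$, which is strictly negative for $k \ge 2$ because $\log(1-x) < -x$ on $(0,1)$. The minimum is thus attained uniquely at $k = r$, yielding $((r-1)/r)^{r-1}$. Unwinding the reduction, equality forces $a_2 = \cdots = a_r = (r-1)/r$, after which $a_1 \le a_2$ combined with the sum condition pins $a_1 = (r-1)/r$. The only step needing care is the extreme-point case analysis; everything else is a one-variable monotonicity check.
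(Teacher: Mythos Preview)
Your proof is correct and reaches the same family of candidate configurations as the paper, but by a genuinely different route. The paper first scales so that $\sum_i a_i = r-1$, then \emph{fixes} $a_1$ and, using only the concavity of $x(C-x)$, pushes the remaining variables to a block pattern $a_1=\cdots=a_j$, $a_{j+2}=\cdots=a_r=1$ with $a_{j+1}$ determined by the sum; it then minimizes the resulting one-variable expression $a_1^{j-1}j(1-a_1)$ in $a_1$, and finally over the block index $j$. You instead remove $a_1$ entirely, recast the remaining problem as minimizing a strictly concave function over an explicit polytope, and enumerate the vertices. Your approach is conceptually cleaner---one invocation of the concavity/extreme-point principle replaces the paper's two-stage optimization---but it trades the paper's short ad hoc moves for a vertex enumeration that must be carried out carefully; the paper's argument is more elementary in that it never names the polytope or its vertices.

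One small wording point: strict concavity by itself only guarantees that the minimum is attained at \emph{some} extreme point (and nowhere in the relative interior of a face); it does not force the minimizing extreme point to be unique. Your subsequent computation---showing that the values $((k-1)/k)^{k-1}$ are strictly decreasing in $k$ and all below the value $1$ at the all-ones vertex---is what actually delivers uniqueness, so the phrase ``uniquely at an extreme point'' is justified only after that step.
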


\begin{proof}
By scaling each $a_i$ by a factor $(r-1)/\sum_{i=1}^r a_i$, we may assume $\sum_{i=1}^r a_i=r-1$. By averaging, we must have $a_1\leq (r-1)/r<1$. If $a_1=0$, then $a_2=a_3=\ldots=a_r$, in which case $\prod_{i=2}^r a_i = 1$. Hence, we may assume $0<a_1<1$. 

Thus, there is a positive integer $j \leq r-1$ such that $\frac{j-1}{j} < a_1 \leq \frac{j}{j+1}$. Fixing $a_1$, by concavity of the function $x(C-x)$ for any fixed $C$, the product $\prod_{i=2}^r a_i $ is minimized when $a_1=a_2=\dots=a_j$ and $a_i=1$ for $i>j+1$, in which case $a_{j+1}=j(1-a_1)$ and $\prod_{i=2}^r a_i = a_1^{j-1}j(1-a_1)$. The derivative of this function with respect to $a_1$ is negative in this range and hence the function is minimized when $a_1=\frac{j}{j+1}$, in which case $\prod_{i=2}^r a_i = \left(\frac{j}{j+1}\right)^{j}$. This is a decreasing function of $j$, so it is minimized when $j=r-1$ and we get $\prod_{i=2}^r a_i \geq \left(\frac{r-1}{r}\right)^{r-1}$. 
\end{proof}

We have the following corollary. 

\begin{corollary}\label{corbelow}
Suppose $0 \leq a_1 \leq a_2 \leq \ldots \leq a_r \leq 1$ and $\sum_i a_i \geq r-1$. Suppose $0 \leq b_i \leq a_i$ for $1 \leq i \leq r$ and $\alpha:=\sum_{i=1}^r (a_i-b_i)\leq 1/4$.  Then $\prod_{i=2}^r b_i \geq \left(1-2\alpha\right)\left(\frac{r-1}{r}\right)^{r-1}$.  
\end{corollary}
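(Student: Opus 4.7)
The plan is to combine the preceding lemma, which gives $\prod_{i=2}^r a_i \geq \left(\frac{r-1}{r}\right)^{r-1}$, with Lemma \ref{productdecrease}, after one key observation about the structure forced by the hypothesis.

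First I would show that $a_2 \geq 1/2$. Since each $a_i \leq 1$, we have $\sum_{i=3}^r a_i \leq r-2$, so the assumption $\sum_i a_i \geq r-1$ forces $a_1 + a_2 \geq 1$; combined with $a_1 \leq a_2$ this gives $a_2 \geq 1/2$. This is the only place the full strength of the sum condition gets used in a non-trivial way.

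Next I would apply Lemma \ref{productdecrease} to the $r-1$ variables $a_2 \leq a_3 \leq \ldots \leq a_r$, taking the corresponding $b_i$'s as the $y_i$'s. Setting $\alpha' := \sum_{i=2}^r (a_i - b_i)$, we have $\alpha' \leq \alpha$ since the missing term $a_1 - b_1$ is nonnegative, so the lemma yields
\[\prod_{i=2}^r b_i \geq (a_2 - \alpha')\prod_{i=3}^r a_i.\]

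To finish, since $a_2 \geq 1/2$ and $\alpha' \leq \alpha \leq 1/4$, I would use
\[a_2 - \alpha' = a_2\left(1 - \tfrac{\alpha'}{a_2}\right) \geq a_2(1 - 2\alpha') \geq a_2(1-2\alpha),\]
and then multiply through by $\prod_{i=3}^r a_i$ and invoke the preceding lemma to conclude
\[\prod_{i=2}^r b_i \geq (1-2\alpha)\prod_{i=2}^r a_i \geq (1-2\alpha)\left(\tfrac{r-1}{r}\right)^{r-1}.\]
There is no real obstacle; the one subtle point is recognizing that the hypothesis $\sum_i a_i \geq r-1$ is used precisely to secure $a_2 \geq 1/2$, which prevents the concentrated loss $\alpha'$ near the smallest factor of the product from costing a factor $1 - \alpha'/a_2$ that could otherwise be arbitrarily close to zero.
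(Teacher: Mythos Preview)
Your proof is correct and follows essentially the same route as the paper: establish $a_2 \geq 1/2$ from the sum condition, apply Lemma~\ref{productdecrease} to $a_2,\ldots,a_r$, convert $a_2-\alpha'$ into $(1-2\alpha)a_2$, and finish with the preceding lemma. The only cosmetic difference is that you track $\alpha'=\sum_{i\geq 2}(a_i-b_i)\leq \alpha$ explicitly, whereas the paper simply uses $\alpha$ throughout; this changes nothing.
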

\begin{proof}
As $\sum_i a_i \geq r-1$, $a_1 \leq a_2$ and each $a_i \leq 1$, it follows that $a_2 \geq 1/2$. As $\alpha \leq 1/4$, we have $a_2-\alpha \geq (1-2\alpha)a_2$. We have $\prod_{i=2}^r b_i \geq (a_2-\alpha)\prod_{i \geq 3} a_i \geq (1-2\alpha)\prod_{i \geq 2} a_i \geq (1-2\alpha)\left(\frac{r-1}{r}\right)^{r-1}$, where the first inequality is by Lemma \ref{productdecrease} and the 
last inequality is by the previous lemma. 
\end{proof}

{\bf Proof of Theorem \ref{mainthmjoint}:} We may suppose $r \geq 3$ since the case $r=2$ is handled in \cite{CFS} and \cite{Mubayi}.
Let $\epsilon = \left(\frac{\eta}{20r^2}\right)^2$ (so  $\eta=20r^2\epsilon^{1/2}$) and let $\delta=\delta(r, \epsilon)$ be a constant satisfying the assertion of Lemma \ref{firstkey}. 
If $G$ has at least $\delta n^{r+1}$ copies of $K_{r+1}$ or an edge contained in at least $\left(1-2(r^2+r)\epsilon^{1/2}\right)\left(\frac{n}{r}\right)^{r-1}$ copies of $K_{r+1}$, then we are done.
Otherwise, by Lemma \ref{firstkey}, we may assume that $G$ has a vertex partition $V(G)=S \cup V_1 \cup \ldots \cup V_{r}$ where $|S| \leq \epsilon^{1/2} n$, each $V_i$ is an independent set with $|V_i| \leq \frac{n}{r}+2\epsilon^{1/2}n$ and there are at most $\epsilon n^2$ missing edges between (but not inside) the parts $V_1,\ldots,V_r$. 

If there is $i \in [r]$ and a vertex in $S$ not adjacent to any vertex in $V_i$, then move this vertex from $S$ to $V_i$. Letting $V_1$ be the part of minimum order, if there is a vertex $v$ in $S$ of degree less than $\sum_{i=2}^r |V_i|$, then we move $v$ to $V_1$ and replace its neighborhood by $\bigcup_{i=2}^r V_i$. This increases the number of edges while guaranteeing that $v$ participates in zero copies of $K_{r+1}$, but also maintains the conditions of the lemma and partition (aside from possibly increasing the order of $V_1$). If there is a vertex $u$ in some $V_i$ nonadjacent to more than $|S|$ vertices in $\bigcup_{j \not = i} V_j$, then we can delete its edges to $S$ and make it complete to  $\bigcup_{j \not = i} V_j$. This increases the number of edges while guaranteeing that $u$ is not in any copy of $K_{r+1}$ and again maintains the conditions of the lemma and partition. 

We repeat the above operations until no such vertex remains. We arrive at a graph (which we again call $G$) on $n$ vertices with at least $t_r(n)$ edges and a vertex partition $V(G)=S \cup V_1 \cup \ldots \cup V_{r}$ (which we relabel if necessary) satisfying 
\begin{itemize}
\item $|V_1| \leq |V_2| \leq \dots \leq |V_r|$, each $V_i$ is an independent set and has size at most $\frac{n}{r}+3\epsilon^{1/2}n$ .
\item $|S| \leq \epsilon^{1/2} n$, each vertex in $S$ has a neighbor in every $V_i$ and has degree at least $\sum_{i=2}^r |V_i|$.
\item Each vertex in $V_i$ for $i \in [r]$ is adjacent to all but at most $|S|$ vertices in $\bigcup_{j \not = i} V_j$ and there are at most $\epsilon n^2$ missing edges between (but not inside) the parts $V_1,\ldots,V_r$.
\end{itemize}

If $S$ is empty, then $G$ is $r$-partite and, since it has at least $t_r(n)$ edges, $G$ must be the balanced complete $r$-partite graph. So we can assume $S$ is nonempty.

Since $S$ is nonempty, there is a vertex $v \in S$ which has a neighbor in each $V_i$. Let $i_0$ be such that $|N(v) \cap V_{i_0}|$ is minimum and let $w \in N(v) \cap V_{i_0}$. We claim that the edge $(v,w)$ is in at least  $(1-\beta)\left(\left(\frac{r-1}{r^2}\right)n\right)^{r-1}$ copies of $K_{r+1}$. Indeed, as discussed above, $w$ is adjacent to all but at most $|S|$ vertices in $\bigcup_{i \not = i_0} V_i$. The number of vertices in $\bigcup_{i} V_i$ the vertex $v$ is adjacent to is at least 
$$\sum_{i \geq 2}|V_i|-|S| \geq \left(1-\frac{1}{r}\right)(n-|S|)-|S| \geq \left(1-\frac{1}{r}\right)n-2|S|.$$ 

Let $a_i= |N(v)\cap V_i|/|V_r|$, so $0  < a_i \leq 1$ for each $i$ and $a_{i_0}$ is the minimum $a_i$. Observe that $$|V_r|\sum_{i} a_i \geq \left(1-\frac{1}{r}-2\epsilon^{1/2}\right)n \geq  \left(1-\frac{1}{r}-2\epsilon^{1/2}\right)|V_r|/\left(\frac{1}{r}+3\epsilon^{1/2}\right)\geq \left(r-1-4r^2\epsilon^{1/2}\right)|V_r|.$$

Let $b_i=|N(v) \cap N(w) \cap V_i|/|V_r|$ for $i \neq i_0$ and $b_{i_0} = a_{i_0}$, so  $$\sum_{i} b_i \geq \sum_i a_i-|S|/|V_r| \geq \sum_i a_i -r|S|/(n-|S|) \geq \sum_i a_i -2r\epsilon^{1/2}.$$

By possibly increasing some of the $a_i$, keeping their order and keeping each at most $1$, we can guarantee that their sum is increased by at most $4r^2\epsilon^{1/2}$ and is at least $r-1$. Then the difference between the sum of the new $a_i$ and the sum of the $b_i$ is at most $2r\epsilon^{1/2}+ 4r^2\epsilon^{1/2} \leq 5r^2\epsilon^{1/2}=:\alpha$. Hence, it follows from Corollary \ref{corbelow} that $\prod_{i \not = i_0} b_i \geq \left(1-2\alpha\right)\left(\frac{r-1}{r}\right)^{r-1}$. For $i \not = i_0$, let $A_i=N(v) \cap N(w) \cap V_i$, so the $r-1$ parts $A_i$ with $i \not = i_0$ form a complete $(r-1)$-partite subgraph apart from at most $\epsilon n^2$ missing edges. We have $$\prod_{i \not = i_0} |A_i|=\prod_{i \not = i_0} b_i|V_r|=|V_r|^{r-1}\prod_{i \not = i_0} b_i \geq 
|V_r|^{r-1}\left(1-2\alpha\right)\left(\frac{r-1}{r}\right)^{r-1}.$$

Each copy of $K_{r-1}$ with one vertex in each $A_i$ with $i \not = i_0$ together with $v,w$ forms a copy of $K_{r+1}$ containing the edge $(v,w)$. If the $(r-1)$-partite graph between the $A_i$ with $i \not = i_0$ was complete, then we would have that $v,w$ are in at least $\left(1-2\alpha\right)\left(\frac{r-1}{r}\right)^{r-1}|V_r|^{r-1}$ copies of $K_{r+1}$. 

If $r=3$, each missing edge between different $A_i$ with $i \not  = i_0$ decreases the total count of $K_{r+1}$ containing $(v,w)$ by one. Thus, if $r=3$, we have that $v,w$ extend to at least 
$$\left(1-2\alpha\right)\left(\frac{2}{3}\right)^2|V_r|^2-\epsilon n^2 \geq \left(1-2\alpha\right)\left(\frac{2}{3}\right)^2\left(\frac{n-|S|}{3}\right)^2-\epsilon n^2  \geq \left(1-100\epsilon^{1/2}\right)\frac{4}{81}n^2$$ 
copies of $K_4$. 

If $r>3$, as $|A_i| \leq |V_i| \leq |V_r|$, each missing edge between different $A_i$ with $i \not = i_0$ decreases the total count of $K_{r+1}$ containing the edge $(v,w)$ by at most $|V_r|^{r-3}$. Since there are at most $\epsilon n^2$ such missing edges, the total count of $K_{r+1}$ containing the edge $(v,w)$ is at least 
\begin{eqnarray*}\left(1-2\alpha\right)\left(\frac{r-1}{r}\right)^{r-1}|V_r|^{r-1}-\epsilon n^2 \cdot |V_r|^{r-3} & = & \left(\left(1-2\alpha\right)\left(\frac{r-1}{r}\right)^{r-1}-\frac{\epsilon n^2}{|V_r|^2}\right)|V_r|^{r-1} \\ & \geq & \left(\left(1-2\alpha\right)\left(\frac{r-1}{r}\right)^{r-1}-\frac{\epsilon r^2n^2}{(n-|S|)^2}\right)|V_r|^{r-1} \\ & \geq & 
(1-3\alpha)\left(\frac{r-1}{r}\right)^{r-1}|V_r|^{r-1} \\ & \geq & 
(1-3\alpha)\left(\frac{r-1}{r}\right)^{r-1}r^{1-r}(n-|S|)^{r-1} \\ & \geq & 
(1-20r^2\epsilon^{1/2})(r-1)^{r-1}r^{2-2r}n^{r-1},
\end{eqnarray*}
completing the proof. \qed

\section{Infinite $K_{s,t}$-free graphs} \label{sec:inf}

A \emph{Sidon set} is an infinite set of natural numbers such that there are no non-trivial solutions to the equation $a + b = c + d$. It is well known that there are Sidon sets $S$ for which the intersection of $S$ with $\{1, 2, \dots, n\}$ is of order $(1 + o(1)) \sqrt{n}$. However, a result of Erd\H{o}s (see~\cite[Chapter II, \S 3]{HR83}) shows that, for any fixed Sidon set $S$,
\[\liminf_{n \rightarrow \infty} \frac{|S \cap \{1, 2, \dots, n\}|}{\sqrt{n/\log n}} < \infty,\]
where we will take $\log$ to be the natural logarithm  for the remainder of this section. On the other hand, a remarkable construction due to Ruzsa~\cite{Ru98} shows that there are Sidon sets $S$ such that $|S \cap \{1, 2, \dots, n\}| \geq n^{\sqrt{2} - 1 + o(1)}$ for all $n$.

In graphs, the natural analogue of a Sidon set is a $C_4$-free graph. However, it is easy to construct $C_4$-free graphs on vertex set $\mathbb{N}$ such that the induced subgraph on $\{1, 2, \dots, n\}$ has $\Omega(n^{3/2})$ edges for all $n$. For example, one may take the ordered disjoint union of a sequence of $C_4$-free graphs, with the $k^{th}$ graph having $n_k = 2^k$ vertices and $\Omega(n_k^{3/2})$ edges. But if one changes the question to demand that the minimum degree of the induced subgraph on $\{1, 2, \dots, n\}$ is always large, the problem again becomes a difficult one.

By modifying Ruzsa's construction, Cilleruelo~\cite{C16} was able to show that there are graphs $G$ on $\mathbb{N}$ for which the minimum degree of the induced subgraph $G_n$ on $\{1, 2, \dots, n\}$ is at least $n^{\sqrt{2} - 1 + o(1)}$. However, it was left open to determine whether $\liminf_{n \rightarrow \infty} \delta(G_n)/\sqrt{n} = 0$ for all infinite countable $C_4$-free graphs $G$. We answer this question by proving a graph-theoretic analogue of Erd\H{o}s' result, saying that if $G$ is a $C_4$-free graph on $\mathbb{N}$, then
\[\liminf_{n \rightarrow \infty} \frac{\delta(G_n)}{\sqrt{n/\log n}} < \infty.\]

In fact, we will prove a more general result, which applies to $K_{s,t}$-free graphs for any $2 \leq s \leq t$. Here it is known~\cite{KRS96} that if $t$ is sufficiently large compared to $s$, then there are $K_{s,t}$-free graphs on $n$ vertices with $\Omega(n^{2-1/s})$ edges and this can again be lifted to show that there are graphs on $\mathbb{N}$ such that the induced subgraph on $\{1, 2, \dots, n\}$ has $\Omega(n^{2 - 1/s})$ edges for all $n$. Our result says that one cannot guarantee that the minimum degree of these induced subgraphs is $\Omega(n^{1-1/s})$ for all $n$.

\begin{theorem}
Suppose that $2 \leq s \leq t$ and $G$ is a $K_{s,t}$-free graph on $\mathbb{N}$. Then
\[\liminf_{n \rightarrow \infty} \frac{\delta(G_n)}{n^{1-1/s}/(\log n)^{1/s}} < \infty,\]
where $G_n$ denotes the induced subgraph of $G$ on $\{1, 2, \dots, n\}$.
\end{theorem}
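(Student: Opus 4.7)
The plan is to argue by contradiction. Suppose that the liminf is infinite, i.e.\ $\delta(G_n)/(n^{1-1/s}/(\log n)^{1/s}) \to \infty$; I will try to derive a $K_{s,t}$ in $G$, contradicting the hypothesis. The overall strategy is a multi-scale version of the K\H{o}v\'ari--S\'os--Tur\'an (KST) double counting, in which the extra $(\log n)^{1/s}$ factor (over the classical KST bound $\delta(G_n) = O(n^{1-1/s})$) is extracted by summing the KST inequality across the $\log_2 n$ dyadic scales.

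Fix a large integer $K$, set $N = 2^K$, and partition $[N]$ into dyadic intervals $I_k = (2^{k-1}, 2^k]$ for $1 \le k \le K$. At each dyadic scale $n_k = 2^k$, the subgraph $G_{n_k}$ is $K_{s,t}$-free, so KST gives
\[
\sum_{v \in [n_k]} \binom{d_{G_{n_k}}(v)}{s} \le (t-1)\binom{n_k}{s}.
\]
Each $v \in I_k \subseteq [n_k]$ satisfies $d_{G_{n_k}}(v) \ge \delta(G_{n_k})$, which under the contradiction hypothesis is much larger than $n_k^{1-1/s}/k^{1/s}$.

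The key step is to double count a scale-sensitive quantity
\[
S := \sum_{k=k_0}^{K} \sum_{v \in I_k} \binom{d_{G_{n_k}}(v)}{s}.
\]
Switching the order of summation, each $s$-subset $W \subseteq [N]$ contributes to $S$ once per common neighbor $v \in N_G(W)$ whose dyadic scale $k(v)$ satisfies $k(v) \ge \lceil \log_2 \max(W)\rceil$. Since $G$ itself (not merely $G_N$) is $K_{s,t}$-free, the total number of common neighbors of any fixed $W$ is at most $t-1$, so $S \le (t-1)\binom{N}{s}$. On the other hand, the lower bound $\binom{d_{G_{n_k}}(v)}{s} = \omega(n_k^{s-1}/k)/s!$ for $v \in I_k$, summed over the $K$ dyadic levels, yields a matching lower bound on $S$ which, for $K$ sufficiently large, beats $(t-1)\binom{N}{s}$ and gives the desired contradiction.

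The main obstacle is that a naive application of multi-scale KST recovers only the classical bound $\delta(G_n) = O(n^{1-1/s})$, swallowing the $(\log n)^{1/s}$ saving. The gain appears precisely because (i) the inner sum in $S$ is restricted to the ``fresh'' vertices $v \in I_k$, so the minimum-degree hypothesis is used once per dyadic level without overcounting, and (ii) a common neighbor of a fixed $s$-set $W$ is charged to exactly one scale, namely $k(v) \ge \lceil\log_2 \max(W)\rceil$, which is what lets the full $K_{s,t}$-free bound $|N_G(W)| \le t-1$ (applied in all of $G$, not just in $G_N$) kick in on the upper-bound side. Calibrating the weights in $S$ so that these two effects combine correctly is the technical heart of the argument.
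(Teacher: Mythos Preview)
Your dyadic double count does not produce the contradiction you claim. Under the hypothesis $\delta(G_{n_k})\ge C\,n_k^{1-1/s}/k^{1/s}$, the level-$k$ contribution to the lower bound on $S$ is of order $|I_k|\cdot C^s n_k^{s-1}/(s!\,k)\asymp C^s n_k^{s}/k$. Since $n_k=2^k$, these terms grow geometrically in $k$ (the ratio of consecutive terms is at least $2^{s}\cdot\frac{k}{k+1}\ge 2$), so the whole sum over $k\le K$ is within a constant of its last term, i.e.\ $S\gtrsim C^s N^s/(s!\,K)$. Comparing with your upper bound $S\le (t-1)\binom{N}{s}$ yields only $C^s=O(K)=O(\log N)$, which is exactly the single-scale K\H{o}v\'ari--S\'os--Tur\'an bound $\delta(G_N)=O(N^{1-1/s})$ and is perfectly compatible with $\liminf=\infty$. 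No weighting of the levels rescues this: if you damp level $k$ by $w_k$ to balance the lower bound, the upper bound for a fixed $s$-set $W$ becomes $(t-1)\max_{k\ge k(W)}w_k$ (or worse), and a short computation shows the ratio of the two sides never improves. The root cause is that you let the ``$s$-side'' of the KST count range over all of $[n_k]$, so the right-hand side $(t-1)\binom{n_k}{s}$ inflates in lockstep with the left; the multi-scale sum is then just the top-scale inequality in disguise.

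The paper's argument avoids this by anchoring the $s$-side in a \emph{fixed} window $I_1=[n]$ and letting only the neighbour side range, over $n$ equal-length blocks $I_1,\dots,I_n$ covering $[n^2]$. Writing $E_j$ for the number of $I_1$--$I_j$ edges and $d_v$ for the degree of $v$ into $I_1$, KST plus convexity gives $\sum_j E_j^{\,s}=O(n^{2s-1})$; H\"older then bounds $\sum_j E_j/j^{(s-1)/s}$ from above. On the other side, Abel summation turns this sum into $\sum_j F_j/j^{2-1/s}$ with $F_j=\sum_{i\le j}E_i$, and the minimum-degree hypothesis, applied to the vertices of $I_1$ inside $G_{jn}$, gives $F_j\ge n\cdot\delta(G_{jn})$. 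The resulting lower bound contains a genuine harmonic sum $\sum_{j\le n}1/j$, and that is where the $\log n$ factor is extracted. The idea you are missing is precisely this: keep the window that hosts the $s$-tuples fixed and small, and let the scale grow only on the neighbour side.
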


\begin{proof}
Fix a large integer $n$ and, for each natural number $\ell$, let $I_\ell = ((\ell - 1)n, \ell n]$ and $J_\ell = \bigcup_{j = 1}^\ell I_j = [1, \ell n]$. Our aim will be to show that for some $\ell \leq n$, there exists a vertex $v \in I_1$ whose degree in $J_\ell$ is less than $16 t^{1/s} s |J_\ell|^{1-1/s}/(\log n)^{1/s}$. For the sake of contradiction, we will assume that this is not the case.

For each $\ell$, let $E_\ell$ denote the number of edges between $I_1$ and $I_\ell$ (for $\ell = 1$, this counts all edges in the graph induced on $I_1$ twice) and let $F_\ell = \sum_{j = 1}^\ell E_j$. For each vertex $v$, let $d_v$ be the number of neighbors of $v$ in $I_1$. Note that 
\[\sum_{v \in J_n} \binom{d_v}{s} \leq (t-1) \binom{n}{s}.\]
Otherwise, there would be $s$ vertices in $I_1$ whose joint neighborhood contained at least $t$ elements, thus forming a $K_{s,t}$. By convexity, we have
\[\sum_{v \in I_j} \binom{d_v}{s} \geq n \binom{E_j/n}{s} = \frac{E_j (E_j - n) \cdots (E_j - (s-1)n)}{s! n^{s-1}}\]
and, therefore,
\[\sum_{j = 1}^n \frac{[E_j (E_j - n)\cdots(E_j - (s-1)n)]_+}{s!n^{s-1}} \leq \sum_{v \in J_n} \binom{d_v}{s} \leq (t-1) \binom{n}{s},\]
where $x_+ = \max\{0,x\}$.
If $E_j \geq sn$, we have $E_j(E_j - n) \cdots (E_j - (s-1)n) \geq s! E_j^s/s^s$. 
Thus,
\[\sum_{j=1}^n E_j^s \leq \sum_{\{j : E_j < sn\}} (sn)^s + \sum_{\{j : E_j \geq sn\}} E_j^s \leq s^s n^{s+1} + \frac{s^s}{s!} \sum_{j = 1}^n [E_j (E_j - n) \cdots (E_j - (s-1)n)]_+ \leq t s^s n^{2s-1}.\]
By H\"older's inequality,
\[\sum_{j = 1}^n \frac{E_j}{j^{(s-1)/s}} \leq \left( \sum_{j=1}^n E_j^s \right)^{1/s} \left( \sum_{j=1}^n \frac{1}{j} \right)^{(s-1)/s} < 2 t^{1/s} s n^{2 - 1/s} (\log n)^{(s-1)/s}.\]
On the other hand,
\[\sum_{j = 1}^n \frac{E_j}{j^{(s-1)/s}} = \sum_{j = 1}^n \frac{F_j - F_{j-1}}{j^{(s-1)/s}}
 = \sum_{j=1}^{n-1} F_j \left(\frac{1}{j^{(s-1)/s}} - \frac{1}{(j+1)^{(s-1)/s}}\right) + \frac{F_n}{n^{(s-1)/s}}
 \geq \sum_{j=1}^{n-1} \frac{F_j}{2 (j+1)^{2 - 1/s}},\]
 where we used that
 \[(j+1)^{(s-1)/s} - j^{(s-1)/s} = j^{(s-1)/s}\left(\left(1 + \frac{1}{j}\right)^{(s-1)/s} - 1\right) \geq j^{(s-1)/s} \left(1 + \frac{s-1}{sj}- 1\right) \geq \frac{1}{2 j^{1/s}}.\]
Since we are assuming that every vertex in $I_1$ has degree at least $16 t^{1/s} s |J_j|^{1-1/s}/(\log n)^{1/s}$ in $J_j$ and $|I_1| = n$ and $|J_j| = jn$, this implies that 
\begin{align*}
\sum_{j = 1}^n \frac{E_j}{j^{(s-1)/s}}  \geq \sum_{j=1}^{n-1} \frac{F_j}{2 (j+1)^{2 - 1/s}}  & \geq \sum_{j=1}^{n-1} \frac{n \cdot 16 t^{1/s} s (jn)^{1-1/s}/(\log n)^{1/s}}{2 (j+1)^{2-1/s}} \geq \frac{16 t^{1/s} s n^{2 - 1/s}}{(\log n)^{1/s}} \sum_{j=1}^{n-1} \frac{j^{1-1/s}}{2 (j+1)^{2-1/s}}\\ &
\geq \frac{4 t^{1/s} s n^{2-1/s}}{(\log n)^{1/s}} \sum_{j=1}^{n-1} \frac{1}{j+1} \geq 2 t^{1/s} s n^{2-1/s} (\log n)^{(s-1)/s}.
\end{align*}
But this is a contradiction, completing the proof.
\end{proof}

A natural generalization of this problem is to consider what happens for infinite $H$-free graphs when $H$ is a fixed bipartite graph. Given a bipartite graph $H$ whose Tur\'an number ex$(n, H)$ is $\Theta(n^{1 + \alpha})$, it is reasonable to ask whether, for any infinite $H$-free graph $G$, 
\[\liminf_{n \rightarrow \infty} \frac{\delta(G_n)}{n^{\alpha}} = 0.\]
Our result proves that this is the case when $H = K_{s,t}$. It would be interesting to find other examples of bipartite graphs $H$ where it holds. For instance, a particular case where we believe that the limit should be zero is when $H = C_{2k}$ for $k \geq 3$. In the other direction, it would be interesting to find examples of bipartite graphs where the limit above is constant. 

\section{Concentration for Ramsey numbers of random graphs} \label{sec:ramcon}

The \emph{Ramsey number} $r(H)$ of a graph $H$ is the smallest natural number $n$ such that any two-coloring of the edges of $K_n$ contains a monochromatic copy of $H$. Our interest here will be in the behavior of this function for random graphs. That is, what can we say about the Ramsey number of the \emph{binomial random graph} $G_{n,p}$, where each edge in an $n$-vertex graph is chosen independently with probability $p$? While the Ramsey properties of random graphs have been studied in great detail (see~\cite{C14} and its references), the problem of estimating Ramsey numbers of random graphs themselves has received surprisingly scant attention (though see~\cite{C13, FS092} for some results). Our main result here is to show that these Ramsey numbers are well concentrated, reducing the problem, at least in first approximation, to the study of $\mathbb{E}(\log r(G_{n,p}))$.

\begin{theorem} \label{thm:ramcon}
For any $p := p(n)$,
\[\mathbb{P}[|\log r(G_{n,p}) - \mathbb{E}(\log r(G_{n,p}))| = \omega(\sqrt{n} \log n)] = o(1).\]
\end{theorem}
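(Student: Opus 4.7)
The plan is to apply the vertex-exposure Doob martingale to the random variable $X := \log r(G_{n,p})$. Label the vertices $1, \ldots, n$, let $\mathcal{F}_i$ be the $\sigma$-algebra generated by the edges of $G_{n,p}$ inside $\{1, \ldots, i\}$, and set $M_i := \mathbb{E}[X \mid \mathcal{F}_i]$, so that $M_0 = \mathbb{E}[X]$ and $M_n = X$.

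The crucial input is a Lipschitz estimate for $\log r$: whenever two graphs $G$ and $G'$ on $[n]$ agree outside the edges incident to a single vertex $v$,
\[|\log r(G) - \log r(G')| \leq C \log n\]
for an absolute constant $C$. Granting this, a standard coupling across the edges exposed at the $i$th step gives $|M_i - M_{i-1}| \leq C \log n$ almost surely, and Azuma--Hoeffding yields
\[\pr\!\left[|X - \mathbb{E}[X]| \geq t\right] \leq 2 \exp\!\left(-\tfrac{t^2}{2 C^2 n \log^2 n}\right),\]
which is $o(1)$ for any $t = \omega(\sqrt{n}\,\log n)$, giving the conclusion of the theorem.

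The Lipschitz estimate in turn reduces to the combinatorial claim that $r(G) \leq n^{O(1)} \cdot r(G \setminus v)$ for every graph $G$ on $n$ vertices and every vertex $v$: writing $H := G \setminus v = G' \setminus v$, applying the claim to both $G$ and $G'$ sandwiches $r(G)$ and $r(G')$ in $[r(H),\, n^{O(1)} r(H)]$, so their log-ratio is $O(\log n)$. The combinatorial claim itself is proved by an Erd\H{o}s--Szekeres--type book construction. In any $2$-colouring of $K_N$ with $N = n^C r(H)$, iteratively pick a vertex and descend into its majority-coloured neighbourhood; after $O(\log n)$ iterations, the residual set $R$ still has size at least $r(H)$ and so contains a monochromatic copy of $H$, while the vertices chosen so far split into a red clique $S_{\text{red}}$ (each element red-complete to $R$) and a blue clique $S_{\text{blue}}$ (each element blue-complete to $R$). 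Running the iteration long enough---with a second phase inside $R$ when one colour class is initially empty---ensures that $S_c \neq \emptyset$ for the colour $c$ in which the monochromatic copy of $H$ inside $R$ appears; extending that copy by any vertex of $S_c$ produces a monochromatic copy of $G$.

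The main obstacle is the combinatorial Lipschitz lemma itself. Ramsey numbers can be very sensitive to local modifications of the graph, and the book-building must be arranged so that an extension vertex of the correct colour is always available (this is the delicate point, since a naive one-phase iteration may miss one of the two colours entirely). Once this lemma is in hand, the concentration result is immediate from Azuma--Hoeffding.
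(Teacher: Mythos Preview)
Your approach matches the paper's exactly at the top level: vertex-exposure martingale, Azuma--Hoeffding, and the Lipschitz bound $r(G)\le n^{O(1)}\,r(G\setminus v)$. The only difference is in how the Lipschitz lemma is proved. The paper (Lemma~\ref{lem:bounddiff}) gets the sharp inequality $r(H)\le 2\,v(H')\,r(H')$ by a single step rather than an iterated book: pick one vertex $v$, pass to its majority (say red) neighbourhood $X$ of size at least $v(H')\,r(H')$; if some $w$ has $r(H')$ blue neighbours in $X$, that blue neighbourhood already contains a monochromatic $H'$ extendable by $v$ or by $w$, while otherwise every vertex of $X$ has blue degree below $r(H')$, so any $v(H')-1$ vertices of $X$ share a common red neighbour and a red $K_{v(H')}$ (hence a red $H'$) is built greedily and extended by $v$. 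This dichotomy is exactly the device your ``second phase'' is groping for, and it is needed: merely continuing to take majority neighbourhoods inside $R$ does not force the missing colour to appear within $O(\log n)$ rounds, so your sketch as written does not yet close. Replacing your iteration by this one-step argument both fills the gap and sharpens the constant.
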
 
That is, there exists a function $f(n, p)$ such that $r(G_{n,p}) = 2^{f(n,p) \pm \omega(\sqrt{n} \log n)}$ with high probability. A reasonable conjecture is that $f(n,p) = c(p) n$ for some function $c(p)$, perhaps even linear in $p$, but proving this seems well beyond current methods. 

The proof of Theorem~\ref{thm:ramcon} relies on the following simple lemma. Note that we write $v(H)$ for the number of vertices in $H$.

\begin{lemma} \label{lem:bounddiff}
For any graph $H$ and any graph $H'$ obtained from $H$ by removing a single vertex,
\[r(H') \leq r(H) \leq 2 v(H') r(H').\]
\end{lemma}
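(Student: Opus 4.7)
The lower bound $r(H')\le r(H)$ is immediate: since $H'\subseteq H$, any monochromatic copy of $H$ in a 2-coloring of $K_n$ contains a monochromatic copy of $H'$, so any $n$ that forces a monochromatic $H$ also forces a monochromatic $H'$, giving $r(H)\ge r(H')$.

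For the upper bound, write $k=v(H')$ and $N=2k\,r(H')$, and consider any 2-edge-coloring of $K_N$. The plan is to locate a monochromatic copy of $H'$ inside a sufficiently large monochromatic neighborhood of a single vertex and then attach that vertex. Pick any vertex $v$; since $|R_v|+|B_v|=N-1=2kr(H')-1$, pigeonhole forces one of the two neighborhoods to have size at least $kr(H')$, so WLOG $|R_v|\ge kr(H')\ge r(H')$. By the Ramsey condition inside $R_v$ there is a monochromatic copy of $H'$. If that copy is red, then combined with $v$ it gives a red copy of $H$: take $v$ as the image of the vertex $v^*$ removed from $H$ to produce $H'$, and observe that every edge from $v$ into $R_v$ is red, so in particular the edges from $v$ to the image of $N_H(v^*)$ are red.

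The main obstacle is the remaining case, in which every monochromatic $H'$ inside $R_v$ is blue. To handle it, pick such a blue copy $C$ of $H'$ inside $R_v$ and let $S_C\subseteq V(C)$ be the image of $N_H(v^*)$. Either some vertex $w\in V\setminus C$ is blue-adjacent to every element of $S_C$, which immediately gives a blue copy of $H$ by attaching $w$ to $C$ in the role of $v^*$, or every vertex of $V\setminus C$ contributes at least one red edge into $S_C$. In the latter case, pigeonholing these $\ge N-k$ red edges over the $\le k$ vertices of $S_C$ produces a vertex $u\in S_C$ with red-degree at least $(N-k)/k = 2r(H')-1\ge r(H')$ outside $C$. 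Applying the Ramsey condition inside $u$'s red neighborhood then yields either a red copy of $H'$ (which, together with $u$, closes off into a red copy of $H$, since all edges from $u$ to its red neighborhood are red) or a further blue copy of $H'$ disjoint from $C$, at which point the extension-and-pigeonhole step is iterated. I expect the technical heart of the proof to be bookkeeping this iteration so that it terminates with a monochromatic $H$ within the budget $N=2k\,r(H')$: the factor $2$ in $N$ is exactly what leaves room for the $|S_C|\le k$ pigeonhole to preserve the Ramsey threshold $r(H')$ at each step.
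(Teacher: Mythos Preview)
Your opening and the red case are fine, and your pigeonhole step is correctly computed: if no $w\in V\setminus C$ is blue-complete to $S_C$, then some $u\in S_C$ has at least $(N-k)/k=2r(H')-1\ge r(H')$ red neighbours in $V\setminus C$. The problem is that this is exactly the same lower bound you get at every subsequent step, so the ``iteration'' you propose carries no progress measure. Each pass produces a vertex with red degree $\ge 2r(H')-1$, a monochromatic $H'$ in its red neighbourhood, and---if that copy is blue---another vertex with red degree $\ge 2r(H')-1$. Nothing shrinks, nothing is used up, and disjointness of the successive blue copies does not help: you could accumulate up to $N/k=2r(H')$ of them without ever being forced into a red $H'$. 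So the bookkeeping you hope will close the argument does not exist as stated.

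The fix, and what the paper does, is to replace the local step ``pick one blue $H'$ and try to extend it'' by a global dichotomy on $X=R_v$. Ask whether \emph{some} vertex $w$ (anywhere in the graph) has at least $r(H')$ blue neighbours in $X$. If yes, apply the definition of $r(H')$ inside $X\cap N_B(w)$: a monochromatic $H'$ there extends to $H$ by adjoining $v$ (if it is red) or $w$ (if it is blue). If no, then every vertex of $X$ has blue degree at most $r(H')-1$ inside $X$; since $|X|\ge k\,r(H')>(k-1)r(H')$, any $k-1$ vertices of $X$ have a common red neighbour in $X$, so one greedily builds a red $K_k$ in $X$, and adjoining $v$ gives a red $K_{k+1}\supseteq H$. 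This two-case split terminates in one step and is what the factor $k=v(H')$ in $N$ is actually paying for.
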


\begin{proof}
Suppose that we have a complete graph with $2 v(H') r(H')$ vertices whose edges have been colored red and blue. Fix a vertex $v$. By the pigeonhole principle, $v$ has at least $v(H') r(H')$ neighbors in one color, say red. Call this set of neighbors $X$. If there is a vertex $w$  with at least $r(H')$ blue neighbors in $X$, then this neighborhood contains a monochromatic copy of $H'$, which can be extended to a monochromatic copy of $H$ by adding either $v$ or $w$. Otherwise, the induced graph on vertex set $X$ has blue degree at most $r(H')-1$, so any $v(H') - 1$ vertices in $X$ have a red common neighbor in $X$. Hence, $X$ contains a red copy of $H'$, which together with $v$ gives a red copy of $H$.
\end{proof}

We also need the following variant of the Azuma--Hoeffding inequality~\cite[Theorem 1.2]{Warnke}.

\begin{lemma}
\label{azuma}
Let $X_1, \ldots, X_n$ be independent random variables with $X_i$ taking values in the set $\Lambda_i$. Let $\Gamma \subseteq \prod_i \Lambda_i$ be an
event and assume that $f : \prod_i \Lambda_i \rightarrow \mathbb{R}$ satisfies the following Lipschitz condition: there are numbers $c_i \leq d_i$ for $1 \leq i \leq n$ such that, whenever 
$x, x' \in \prod_i \Lambda_i$ differ only in the $k$th coordinate, then $|f(x)-f(x')| \leq c_k$ if $x \in \Gamma$ and $|f(x)-f(x')| \leq d_k$ otherwise.
Then, for any numbers $0 <  \gamma_k \leq 1$ and $t \geq 0$, 
$$\mathbb{P}[|f(x) - \mathbb{E}f(x)| > t] \leq 2\exp\left(-\frac{t^2}{2\sum_i (c_i+\gamma_i(d_i-c_i))^2}\right)+ 2 \, \mathbb{P}[x \not \in \Gamma] \sum_i \gamma_i^{-1}.$$
\end{lemma}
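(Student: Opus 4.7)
The proof is the standard ``typical bounded differences'' argument of Warnke, whose plan is as follows. Consider the Doob martingale $Z_k = \mathbb{E}[f(X) \mid \mathcal{F}_k]$ with $\mathcal{F}_k = \sigma(X_1, \dots, X_k)$ and differences $D_k = Z_k - Z_{k-1}$, so that $f(X) - \mathbb{E} f(X) = \sum_{k=1}^n D_k$. The hypothesis yields only the worst-case bound $|D_k| \leq d_k$ almost surely, which is all that ordinary Azuma--Hoeffding would exploit. The idea is instead to work with a truncated martingale $\tilde{D}_k$ that is almost-surely bounded by $c_k + \gamma_k(d_k - c_k)$ and disagrees with $D_k$ only on a prefix-detectable bad set whose total probability is $O\bigl(\mathbb{P}[X \notin \Gamma]\sum_i \gamma_i^{-1}\bigr)$.

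The central device is, for each $k$, the ``detectable bad'' event $B_k = \{\mathbb{P}[X \notin \Gamma \mid \mathcal{F}_k] \geq \gamma_k\} \in \mathcal{F}_k$. Markov's inequality gives $\mathbb{P}[B_k] \leq \gamma_k^{-1}\mathbb{P}[X \notin \Gamma]$, and a union bound over $k$ supplies the additive $\sum_i \gamma_i^{-1}$ factor. On $B_k^c$ the conditional mass of $\Gamma$ at step $k$ is at least $1 - \gamma_k$, and this is what allows the asymmetric Lipschitz hypothesis to be exploited: when writing $D_k$ as an average against an independent copy $X_k'$ of $X_k$, the comparisons $f(X) - f(X^{(k)})$ (where $X^{(k)}$ differs from $X$ only in coordinate $k$) fall in the ``sharp'' regime $|f(X) - f(X^{(k)})| \leq c_k$ on a conditional subset of probability at least $1 - \gamma_k$, while in the remainder the fallback $d_k$ applies. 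Defining $\tilde{D}_k$ to be $D_k \mathbf{1}_{B_{k-1}^c}$ minus its conditional mean given $\mathcal{F}_{k-1}$ (to preserve the martingale-difference property) then yields $|\tilde{D}_k| \leq c_k + \gamma_k(d_k - c_k)$ almost surely.

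Applying the classical Azuma--Hoeffding inequality to the martingale $\sum_k \tilde{D}_k$ gives the exponential bound $2\exp(-t^2/(2\sum_i(c_i + \gamma_i(d_i-c_i))^2))$, and one concludes by splitting the deviation event: either $\sum_k \tilde{D}_k$ deviates by $t$, or one of the bad events $B_j$ occurs. The residual $\sum_k R_k := \sum_k (D_k - \tilde{D}_k)$ is supported on $\bigcup_j B_j$, whose probability is at most $\sum_i \gamma_i^{-1}\mathbb{P}[X \notin \Gamma]$, and the two-sided deviation together with the recentring correction account for the factor of $2$ in front of this probability.

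The main obstacle is the careful coupling required to transfer the asymmetric hypothesis (the sharp constant $c_k$ is guaranteed only when the base point $x$ lies in $\Gamma$) into the almost-sure bound on $\tilde{D}_k$: one needs that, conditionally on $\mathcal{F}_{k-1}$ and on $B_{k-1}^c$, both compared configurations (or at least the relevant endpoint of each pair) lie in $\Gamma$ with conditional probability $1 - O(\gamma_k)$, and the recentring term must itself inherit the typical bound $c_k + \gamma_k(d_k - c_k)$. Once this coupling-and-centring step is set up cleanly, the remainder of the argument is the routine exponential-moment computation behind Azuma--Hoeffding.
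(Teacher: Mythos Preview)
The paper does not prove this lemma at all; it is quoted verbatim from Warnke~\cite{Warnke} (Theorem~1.2) and used as a black box. Your sketch is precisely Warnke's argument and is correct in outline.

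Two small quibbles worth tidying if you write it out in full. First, the bad event should be indexed so that it is $\mathcal{F}_{k-1}$-measurable with threshold $\gamma_k$, i.e.\ $B_k=\{\mathbb{P}[X\notin\Gamma\mid\mathcal{F}_{k-1}]\geq\gamma_k\}\in\mathcal{F}_{k-1}$; then the key inequality $|D_k|\leq c_k+(d_k-c_k)\,\mathbb{P}[X\notin\Gamma\mid\mathcal{F}_{k-1}]$ (obtained by writing $D_k$ as an average over an independent copy $X_k'$ and using that $|f(X)-f(X^{(k)})|\leq c_k$ whenever $X^{(k)}\in\Gamma$) immediately gives $|D_k\mathbf{1}_{B_k^c}|\leq c_k+\gamma_k(d_k-c_k)$. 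With your indexing ($B_{k-1}$ in place of $B_k$) the bound comes out as $c_k+\gamma_{k-1}(d_k-c_k)$, which is off by one. Second, since $\mathbf{1}_{B_k^c}$ is $\mathcal{F}_{k-1}$-measurable and $\mathbb{E}[D_k\mid\mathcal{F}_{k-1}]=0$, the truncated increment $D_k\mathbf{1}_{B_k^c}$ is already a martingale difference, so the recentring you subtract is identically zero. These are cosmetic; the substance of your sketch is right.
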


\begin{proofof}{Theorem~\ref{thm:ramcon}}
We encode the edges of the random graph $G_{n, p}$ in a sequence of independent random variables $X_1, \ldots, X_n$, where $X_i \in \Lambda_i=\{0,1\}^{i-1}$ records the edges between the  $i$th vertex and the preceding vertices $1, 2, \dots, i-1$. Let 
$f(G)=\log r(G_{n,p})$. Then, by Lemma~\ref{lem:bounddiff},  for any two graphs $G$, $G'$ which differ only in the edges incident with the $i$th vertex we have  $|f(G) - f(G')| \leq 2 \log (2n) \leq 4 \log n=c_i$ (the factor of $2$ comes from the fact that in order to change the edges incident with the $i$th vertex, we first remove and then add edges). By taking $c_i=d_i$ and $\Gamma=\prod_i \Lambda_i$ in Lemma~\ref{azuma}, we conclude that
\[\mathbb{P}[|\log r(G_{n,p}) - \mathbb{E}(\log r(G_{n,p}))| > t] \leq 2 \exp\left(-\frac{t^2}{32 n \log^2 n}\right).\]
Taking $t = \omega(\sqrt{n} \log n)$ then implies the result.
\end{proofof}

Theorem~\ref{thm:ramcon} is not entirely satisfying, since it seems very likely that the $\log$ factor is unnecessary. Such a statement would immediately follow if we could establish the following feasible conjecture.

\begin{conjecture} \label{conj:addone}
There exists an absolute constant $c$ such that for any graph $H$ and any graph $H'$ obtained from $H$ by removing a single vertex,
\[r(H) \leq c \cdot r(H').\]
\end{conjecture}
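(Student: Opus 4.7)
The plan is to prove the conjecture by a dependent-random-choice argument. Let $N = r(H')$ and fix a 2-edge-coloring of $K_{cN}$ with $c$ a sufficiently large absolute constant. Write $v$ for the vertex removed from $H$ to obtain $H'$, let $S = N_H(v) \subseteq V(H')$, and set $s = |S|$. The goal is to produce, in some color $\gamma$, a set $W$ of size at least $N$ such that every $s$-subset of $W$ admits a $\gamma$-common neighbor outside $W$. Given such a $W$, the definition of $r(H')$ yields a monochromatic copy of $H'$ inside $W$; if its color equals $\gamma$, then its embedded $S$-vertices form an $s$-subset of $W$ and extend through a $\gamma$-neighbor to a copy of $H$ in color $\gamma$.

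To produce $W$, I would sample a random $t$-subset $T$ of $V(K_{cN})$ for an appropriate $t$, and let $W_\gamma := \bigcap_{u \in T} N_\gamma(u)$. By convexity, together with the identity $\deg_{\mathrm{red}}(u) + \deg_{\mathrm{blue}}(u) = cN - 1$, at least one color $\gamma$ satisfies $\mathbb{E}[|W_\gamma|]$ of order $cN / 2^t$. A second-moment or alteration step is then used to guarantee that the number of $s$-subsets of $W_\gamma$ with few common $\gamma$-neighbors is small, so that deleting a few vertices of $W_\gamma$ yields the desired $W$. To handle the possibility that the monochromatic $H'$ in $W$ lies in the color opposite to $\gamma$, I would perform the dependent random choice simultaneously for both colors and produce a single $W$ that is good for whichever color hosts $H'$, or alternatively iterate the construction after restricting to the opposite color.

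The main obstacle is controlling the bad $s$-subsets uniformly. The expected number of $s$-subsets of $W_\gamma$ with common $\gamma$-neighborhood below some threshold $m$ scales roughly as $\binom{cN}{s} (m/cN)^t$, and forcing this to be small relative to $\binom{|W_\gamma|}{s}$ requires $t$ to grow with $s$, which in turn shrinks $|W_\gamma|$. Since $s$ can be as large as $v(H') - 1$ and $v(H')$ is unbounded in terms of $r(H')$, a naive application fails to produce an absolute constant $c$. Bridging this gap --- perhaps by exploiting the freedom to choose the vertex $v$ of $H$ (e.g., one of minimum degree), by iterating the dense-set construction across polylogarithmically many rounds, or by combining dependent random choice with a Ramsey-type supersaturation estimate for $H'$ itself --- is the crux of the problem, and I expect it to be the principal difficulty.
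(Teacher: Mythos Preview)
This statement is labeled a \emph{conjecture} in the paper, and the paper does not prove it. The authors explicitly present it as an open problem whose resolution would remove the logarithmic factor in Theorem~\ref{thm:ramcon}. What the paper does prove is the partial result Lemma~\ref{lem:ESzapp}: for graphs $H$ of density at least $d$, one has $r(H) \leq c\,\frac{\log(1/d)}{d}\,r(H')$. That argument is not dependent random choice but rather a density-increment step combined with the Erd\H{o}s--Szemer\'edi lemma (Lemma~\ref{lem:ESz}), which says that a two-coloring of $K_N$ with no monochromatic $K_n$ and blue density at most $q$ forces $N \leq 2^{aq\log(1/q)n}$. The point is that when $H$ is dense, $r(H')$ is already exponential in $v(H')$, so a short color-focusing argument leaves a set of size $r(H')$ with very low density in one color, and Erd\H{o}s--Szemer\'edi then produces a monochromatic $K_{v(H)}$.

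Your proposal correctly identifies its own fatal gap: the dependent-random-choice parameters force $t$ to grow with $s = \deg_H(v)$, so $|W_\gamma| \approx cN/2^t$ collapses unless $s = O(\log N) = O(\log r(H'))$. But $s$ can be as large as $v(H')-1$, and there exist graphs (e.g.\ sparse graphs with one vertex of high degree) where $v(H')$ is not bounded by any function of $\log r(H')$. None of the fixes you list are known to work: choosing $v$ of minimum degree does not help when $H$ is nearly regular but sparse; iterating over polylogarithmically many rounds reintroduces a factor depending on $v(H')$; and no supersaturation estimate of the required strength is available. The conjecture remains open, and your sketch does not close it.
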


For dense $H$, we can prove this conjecture using the following lemma of Erd\H{o}s and Szemer\'edi~\cite{ESz72}. Recall that the \emph{density} of a graph $G$ with $e(G)$ edges is $e(G)/\binom{v(G)}{2}$, the proportion of possible edges which are actually in the graph.

\begin{lemma} [Erd\H{o}s--Szemer\'edi~\cite{ESz72}] \label{lem:ESz}
There exists an absolute constant $a$ such that the following holds: any complete graph whose edges can be two-colored in red and blue so that there is no monochromatic copy of $K_n$ and the density of blue edges is at most $q \leq 1/2$ has at most $2^{a q \log(1/q) n}$ vertices.
\end{lemma}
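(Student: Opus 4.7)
The plan is to adapt the classical Erd\H{o}s--Szekeres Ramsey recursion $r(K_s, K_t) \le \binom{s+t-2}{s-1}$ to the density setting. The key heuristic is that the target bound $2^{a q \log(1/q) n}$ matches, up to constants in the exponent, $\binom{(1+q)n}{qn}$, which is precisely the Erd\H{o}s--Szekeres bound for $r(K_{qn+1}, K_{n+1})$. Thus a coloring of blue density $q$ should, from the Ramsey standpoint, behave as though the blue graph were $K_{qn}$-free, and the lemma should follow from running the standard neighborhood-chasing argument with the blue Ramsey parameter effectively reduced from $n$ to $O(qn)$.

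To implement this, I would introduce an auxiliary quantity $f(s, t, q)$ defined as the largest $N$ for which $K_N$ admits a red--blue coloring with no red $K_s$, no blue $K_t$, and blue-edge density at most $q$. Given such a coloring on $N$ vertices, pick a vertex $v$ of minimum blue degree, which is at most the average $q(N-1)$. Its blue neighborhood $B$ has $|B| \le qN$ and inherits a coloring with blue target reduced to $K_{t-1}$; its red neighborhood $R$ has $|R| \ge (1-q)N-1$ and inherits one with red target reduced to $K_{s-1}$. This yields the two-sided recursion
\[
f(s, t, q) \le 1 + f(s, t-1, q_B) + f(s-1, t, q_R),
\]
where $q_B$ and $q_R$ bound the new blue densities in $B$ and $R$. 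Unrolling this recursion along a lattice path from $(n, n)$ to a boundary case, in the standard Erd\H{o}s--Szekeres manner, gives a binomial-coefficient-like sum, and one aims to show that lattice paths with only $\sim qn$ blue descents dominate, producing the target $\binom{(1+q)n}{qn}$-type bound.

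The main obstacle is controlling the evolution of the density parameter. The naive estimate $q_R \le q/(1-q)^2 \le 4q$ doubles $q$ at each red descent and wipes out the density advantage after only $O(\log(1/q))$ steps, yielding the useless bound $2^{O(n)}$. The heart of the proof must therefore be a more refined choice of $v$: for example, weighting each vertex by $d_B(v)$ plus the number of blue edges it peels off from $R$ (equivalently, the blue edges incident to $B$) should make $q_R$ grow by only a factor $1 + O(1/n)$ per step and hence stay essentially $q$ throughout the $\Theta(n)$-step recursion. With the density stabilized, each blue descent costs a factor of $\sim 1/q$ in set size but can happen at most $\sim qn$ times along the extremal path before the blue Ramsey budget is exhausted, contributing $\sim qn\log(1/q)$ to the exponent, while each red descent costs only $\log(1/(1-q)) = O(q)$, for a total red contribution of $O(qn)$. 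Combining these gives the claimed $N \le 2^{O(qn\log(1/q))}$.
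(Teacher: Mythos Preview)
The paper does not prove this lemma; it is quoted as a result of Erd\H{o}s and Szemer\'edi~\cite{ESz72} and used as a black box. So there is no ``paper's own proof'' to compare against, and the question is simply whether your sketch can be completed.

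Your framework is the right one and your identification of the obstacle is accurate, but the proposal does not actually overcome it. The crucial sentence is ``weighting each vertex by $d_B(v)$ plus the number of blue edges it peels off from $R$ \dots\ should make $q_R$ grow by only a factor $1+O(1/n)$ per step.'' This is asserted, not proved, and in fact it is not clear that any choice of $v$ achieves it. If one picks $v$ to minimise $q_R = m_R/\binom{|R|}{2}$ directly, an averaging computation gives only $\min_v q_R \lesssim q/(1-q)^2 \approx q(1+2q)$, not $q(1+O(1/n))$; the two objectives ``$d_B(v)$ small'' (so $|R|$ is large) and ``many blue edges removed'' (so $m_R$ is small) pull in opposite directions, and your weighted combination does not obviously reconcile them. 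With growth $1+\Theta(q)$ per red step the density stays under control for only $O(1/q)$ steps, which yields a red clique of size $O(1/q)$ rather than $n$, and the argument stalls exactly where you said it would.

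A workable version of the Erd\H{o}s--Szemer\'edi argument does not try to keep the density fixed throughout a single chain of red descents. Instead one interleaves: either some vertex has large blue degree, in which case a blue step shrinks the vertex set by a factor depending on the threshold, or every vertex has small blue degree, in which case one gains a red vertex cheaply. The point is to choose the blue-degree threshold (as a function of the current $q$) so that the total cost of the at most $n-1$ blue steps and the at most $n-1$ red steps is $2^{O(q\log(1/q)n)}$; the global density bound enters by limiting how often the ``large blue degree'' alternative can occur, not by staying constant along the recursion. Your outline is missing this balancing, and without it the recursion you set up does not close.
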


\begin{lemma} \label{lem:ESzapp}
There exists an absolute constant $c$ such that for any graph $H$ of density at least $d$ and any graph $H'$ obtained from $H$ by removing a single vertex,
\[r(H) \leq c \frac{\log(1/d)}{d} r(H').\]
\end{lemma}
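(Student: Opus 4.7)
The plan is to mimic the pigeonhole setup of Lemma~\ref{lem:bounddiff} and then, in its ``otherwise'' branch, apply the Erd\H{o}s--Szemer\'edi lemma (Lemma~\ref{lem:ESz}) to force a monochromatic $K_{v(H)}$, which automatically contains a copy of $H$. Concretely, set $N = C\, r(H')$ with $C = c\log(1/d)/d$, for a large absolute constant $c$ to be chosen in terms of the constant $a$ from Lemma~\ref{lem:ESz}, and assume for contradiction that $K_N$ admits a two-coloring with no monochromatic $H$.

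First I would carry out the opening step of Lemma~\ref{lem:bounddiff}: fix a vertex $v_1$ and, without loss of generality, let $X$ be its red neighborhood, so $|X|\geq (N-1)/2$. If some vertex $w$ has at least $r(H')$ blue neighbors in $X$, then Ramsey applied inside that blue neighborhood produces a monochromatic $H'$, which extends via $v_1$ (if red) or $w$ (if blue) to a monochromatic $H$, a contradiction. Hence every vertex has fewer than $r(H')$ blue neighbors in $X$, so the blue density inside $X$ is at most $q := 4/C$. Since $H \subseteq K_{v(H)}$, no monochromatic $K_{v(H)}$ lives in $X$, and Lemma~\ref{lem:ESz} applied to $X$ yields $|X| \leq 2^{a\, q\log(1/q)\,v(H)}$.

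To close the contradiction I would combine this with the standard random-coloring lower bound, which gives $\log r(H') \geq e(H')/v(H') - O(1)$. Provided $v(H) \geq 4/d$, an easy edge count shows that the density of $H'$ is at least $d/2$, and hence $\log r(H')$ is at least a constant times $d\,v(H)$. Comparing $|X| \geq C\, r(H')/3$ against the Erd\H{o}s--Szemer\'edi upper bound then reduces the inequality to the schematic form $d\, v(H) \leq O\!\bigl((a/C)\log C\bigr)\,v(H)$, i.e., $C \leq O\!\bigl((a/d)\log C\bigr)$, which fails for $C = c\log(1/d)/d$ once $c$ is large enough in terms of $a$. In the complementary regime $v(H) < 4/d$, Lemma~\ref{lem:bounddiff} already gives $r(H) \leq 2 v(H') r(H') \leq (8/d)\, r(H')$, which is at most $c(\log(1/d)/d)\,r(H')$ whenever $d$ is bounded away from $1$ and $c$ is sufficiently large.

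The main obstacle will be the algebra: one has to choose $c$ large enough to absorb the logarithmic slack in $\log(C/2)$ while keeping the density inheritance from $H$ to $H'$ honest, which is precisely why the small-$v(H)$ regime has to be handled separately. A convenient feature of the approach is that $H$ itself is never embedded directly; the density $d$ enters only through two effects, namely forcing the blue density inside $X$ to drop like $1/C$ and simultaneously supplying an exponential lower bound on $r(H')$ that makes the Erd\H{o}s--Szemer\'edi estimate effective.
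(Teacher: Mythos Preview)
Your proposal is correct and follows essentially the same route as the paper: pigeonhole into a red neighborhood $X$, use the $r(H')$ threshold to either extend a monochromatic $H'$ or force the blue density in $X$ below $q \approx 1/C$, then invoke the Erd\H{o}s--Szemer\'edi lemma together with the random-coloring lower bound $r(H') \geq 2^{c' d\, v(H)}$ to obtain a monochromatic $K_{v(H)} \supseteq H$ in $X$. The only cosmetic difference is that you handle the regime $v(H) < 4/d$ separately via Lemma~\ref{lem:bounddiff}, whereas the paper absorbs this into its probabilistic lower bound on $r(H')$; your caveat about $d$ close to $1$ is harmless since one may assume $d \leq 1/2$ without loss.
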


\begin{proof}
Let $n$ be the number of vertices in $H$ and let $c$ be a sufficiently large constant which we choose later. We first note that a simple application of the probabilistic method implies that $r(H') \geq 2^{c' dn}$ for some fixed positive constant $c'$. Indeed, the number of edges in $H'$ is at least $m'=d{n \choose 2}-(n-1)$. Thus, the probability that a random red/blue edge coloring of the complete graph on $N=2^{c' dn}$ vertices contains a monochromatic copy of $H'$ is at most $2 N^{n-1} 2^{-m'} <1$ for an appropriate choice of $c'$ (e.g., $c'=1/3$ will do). Suppose now that we have a graph with $c \frac{\log(1/d)}{d} r(H')$ vertices containing no monochromatic copy of $H$. Fix a vertex $v$. By the pigeonhole principle, $v$ has at least $\frac{c}{2} \frac{\log(1/d)}{d} r(H')$ neighbors in one color, say red. Call this set of neighbors $X$. If there is a vertex $w$ with at least $r(H')$ blue neighbors in $X$, then this neighborhood contains a monochromatic copy of $H'$, which can be extended to a monochromatic copy of $H$ by adding either $v$ or $w$. Otherwise, the set $X$ has blue density at most $q=\frac{2}{c} \frac{d}{\log(1/d)}$. By choosing $c$ sufficiently large, we can guarantee that $a q \log(1/q) < c' d$. Since the size of $X$ is at least $r(H')\geq 2^{c' dn}$, an application of Lemma~\ref{lem:ESz} implies that $X$  contains a monochromatic copy of $K_n$ and, hence, of $H$.  
\end{proof}

In particular, we have that $r(K_{n+1}) \leq c \cdot r(K_n)$ for some absolute constant $c$, which seems not to have been remarked before, though there is work of Burr, Erd\H{o}s, Faudree and Schelp~\cite{BEFS89} in the opposite direction, giving lower bounds on the differences between consecutive Ramsey numbers. More on point, we may use Lemma~\ref{lem:ESzapp} to improve Theorem~\ref{thm:ramcon} when $p$ is a fixed positive constant.

\begin{theorem} \label{thm:ramcondense}
For $p$ a fixed positive constant,
\[\mathbb{P}[|\log r(G_{n,p}) - \mathbb{E}(\log r(G_{n,p}))| = \omega(\sqrt{n})] = o(1).\]
\end{theorem}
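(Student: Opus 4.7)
My plan is to re-run the proof of Theorem~\ref{thm:ramcon} with Warnke's inequality (Lemma~\ref{azuma}), but exploit Lemma~\ref{lem:ESzapp} to replace the worst-case per-vertex Lipschitz constant of $4\log n$ by a constant $c_p = c_p(p)$ on a large "good" event $\Gamma$. The reason we expect to gain a $\sqrt{\log n}$ factor is that, in the Azuma bound from Theorem~\ref{thm:ramcon}, the variance proxy $\sum_i c_i^2$ is $\Theta(n\log^2 n)$; replacing each $c_i$ by $O_p(1)$ on $\Gamma$ shrinks this to $\Theta_p(n)$, so concentration at scale $\omega(\sqrt{n})$ becomes available.

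Concretely, I would encode $G_{n,p}$ by independent random variables $X_1,\dots,X_n$ as in the proof of Theorem~\ref{thm:ramcon}, and let $f = \log r(G_{n,p})$. Define $\Gamma$ to be the event that $G_{n,p}$ has at least $\tfrac{p}{2}\binom{n}{2}$ edges; by Chernoff, $\mathbb{P}[G_{n,p}\notin\Gamma]\leq e^{-\Omega(p^2 n^2)}$. Now suppose $G\in\Gamma$ and $G'$ differs from $G$ only in the edges incident to some vertex $v_k$. Flipping these at most $n-1$ edges changes the total edge count by at most $n-1$, so $G'$ still has density at least $p/3$ for $n$ large. Writing $G^-:=G-v_k=G'-v_k$, Lemma~\ref{lem:ESzapp} applied to both $G$ and $G'$ with $d=p/3$ yields $r(G),r(G')\leq c_p\cdot r(G^-)$ for a constant $c_p$ depending only on $p$, while trivially $r(G^-)\leq \min(r(G),r(G'))$. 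Hence $|f(G)-f(G')|\leq \log c_p$, which is the improved Lipschitz constant $c_k=\log c_p$ on $\Gamma$. Off $\Gamma$, Lemma~\ref{lem:bounddiff} still gives $d_k=4\log n$.

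Now I would apply Lemma~\ref{azuma} with $\gamma_k=1/n^2$ for each $k$. Then $c_k+\gamma_k(d_k-c_k)=\log c_p+O(\log n/n^2)=O_p(1)$, so $\sum_k(c_k+\gamma_k(d_k-c_k))^2=O_p(n)$, while $\sum_k \gamma_k^{-1}=n^3$. The lemma therefore gives, for any $t\geq 0$,
\[
\mathbb{P}\bigl[|f(G_{n,p})-\mathbb{E}f(G_{n,p})|>t\bigr]\leq 2\exp\!\left(-\frac{t^2}{O_p(n)}\right)+2n^3\,e^{-\Omega(p^2 n^2)}.
\]
Choosing $t=\omega(\sqrt{n})$ makes the first term $o(1)$; the second term is $o(1)$ because $n^3 e^{-\Omega(n^2)}\to 0$. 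This is the desired conclusion.

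The only potential obstacle is making sure the improved Lipschitz bound is valid for $G'$ as well as for $G$, since Lemma~\ref{lem:ESzapp} is a one-sided inequality requiring the larger graph to be sufficiently dense. The observation above -- that modifying the edges at a single vertex perturbs the density by only $O(1/n)$ -- handles this with room to spare, so $\Gamma$ being defined robustly (density $\geq p/2$ rather than just $\geq p-o(1)$) is enough. No new ideas beyond Lemma~\ref{lem:ESzapp} and Warnke's inequality appear to be required.
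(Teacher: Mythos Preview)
Your proposal is correct and follows essentially the same route as the paper: encode $G_{n,p}$ via $X_1,\dots,X_n$, take $\Gamma=\{\text{density}\geq p/2\}$, use Lemma~\ref{lem:ESzapp} to get an $O_p(1)$ Lipschitz constant on $\Gamma$ and Lemma~\ref{lem:bounddiff} to get $4\log n$ off $\Gamma$, then apply Warnke's inequality. The paper chooses $\gamma_i=1/n$ rather than your $1/n^2$, but this is immaterial since the bad-event term is $e^{-\Omega(n^2)}$; if anything, your write-up is slightly more careful in explicitly checking that $G'$ inherits density at least $p/3$ so that Lemma~\ref{lem:ESzapp} applies to it as well.
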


\begin{proof}
We follow the proof of Theorem~\ref{thm:ramcon}. Let $X_1, \dots, X_n$ be the same random variables defined there, encoding the edges of the random graph $G_{n,p}$. Let $\Gamma$ be the event that the density of $G_{n,p}$ is at least $p/2$. By the standard Chernoff estimates for the binomial random variable, the probability that $\Gamma$ does not hold is at most 
$e^{-\Omega(pn^2)}$. As before,  we have that $|f(G) - f(G')| \leq 4 \log n=d_i$ for any two graphs $G$, $G'$
which differ only in the edges incident with the $i$th vertex. On the other hand, since $p$ is a constant, we can use Lemma~\ref{lem:ESzapp} to conclude that if $G \in \Gamma$ then $|f(G) - f(G')| \leq O(1)=c_i$ for any two $G$, $G'$ as above. Define $\gamma_i=1/n$ and note that $\sum_i(c_i+\gamma_i(d_i-c_i))^2=O(n)$. 
Therefore, taking $t = \omega(\sqrt{n})$ and using Lemma \ref{azuma}, we have that
$$\mathbb{P}[|\log r(G_{n,p}) - \mathbb{E}(\log r(G_{n,p}))| > t] \leq 2e^{-\Omega(t^2/n)}+2n^2e^{-\Omega(pn^2)}=
o(1),$$ completing the proof.	
\end{proof}

One might also ask whether results like Theorems~\ref{thm:ramcon} and~\ref{thm:ramcondense} hold for variants of the Ramsey number. For many natural variants, including the induced Ramsey number and the size Ramsey number, we were unable to obtain such statements, chiefly because we could not establish an analogue of Lemma~\ref{lem:bounddiff}. Any progress on such questions would be interesting. 

In the other direction, one might ask whether the random variable $\log r(G_{n,p})$ is not too concentrated. We make the following conjecture along these lines.

\begin{conjecture}\label{conjanti}
For any fixed $0<p<1$, $\log r(G_{n,p})$ is not concentrated with high probability on any interval of length $O(1)$. 
\end{conjecture}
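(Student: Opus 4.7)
The plan is to prove the anti-concentration of $X_n := \log r(G_{n,p})$ via a coupling argument. The starting observation is that if $X, X'$ are identically distributed random variables (not necessarily independent) and $\mathbb{P}[|X - X'| \geq c] \geq \delta$, then $X$ cannot be concentrated on any interval of length $c$ with probability exceeding $1 - \delta/2$. So, to establish the conjecture, it suffices to construct a coupling $(G, G')$ of two copies of $G_{n,p}$ such that $|\log r(G) - \log r(G')| \geq c$ with positive probability, for some absolute constant $c > 0$.

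A natural coupling is single-vertex resampling: let $G = G_{n,p}$, pick a uniformly random vertex $v \in [n]$, and obtain $G'$ by resampling the edges incident to $v$. By Lemma~\ref{lem:ESzapp}, we automatically have $|\log r(G) - \log r(G')| = O(1)$, so the contribution is bounded above; what is needed is a matching lower bound with positive probability. Concretely, the task reduces to the following one-step sensitivity statement: for a typical host graph $H$ on $n-1$ vertices distributed as $G_{n-1,p}$, there exist two subsets $N_1, N_2 \subseteq V(H)$, each of size close to the typical $\mathrm{Bin}(n-1, p)$ value and each arising with probability bounded below when sampling the neighborhood of a new vertex, such that $|\log r(H \cup v_{N_1}) - \log r(H \cup v_{N_2})| \geq c$, where $v_N$ denotes a new vertex attached to the set $N$.

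If such a one-vertex sensitivity can be established with a uniform lower bound on the corresponding conditional variance, one in fact obtains a much stronger conclusion. The vertex-exposure Doob martingale for $X_n$ has increments bounded by $O(1)$ (again by Lemma~\ref{lem:ESzapp}), and a positive fraction of its conditional variances bounded below by $c^2$. A martingale central limit theorem (e.g., McLeish's) would then yield $(X_n - \mathbb{E} X_n)/\sqrt{n} \Rightarrow N(0,\sigma^2)$ for some $\sigma > 0$. Since the limit has a bounded density, this gives $\mathbb{P}[X_n \in I_n] = o(1)$ for every sequence of intervals $I_n$ of length $O(1)$, which is considerably stronger than the conjecture.

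The main obstacle, and the reason the conjecture remains open, is the sensitivity statement itself. All existing tools yield only \emph{upper} bounds on $|\log r(G) - \log r(G')|$ in terms of the symmetric difference of $G$ and $G'$; corresponding \emph{lower} bounds appear to require a genuinely new idea. One natural route is to produce an explicit two-coloring of $K_N$, with $N$ slightly below $r(H \cup v_{N_1})$, which narrowly avoids a monochromatic copy of $H \cup v_{N_1}$ but forces many monochromatic copies of $H \cup v_{N_2}$; but even formulating the right such construction seems delicate, and this is where I would expect the majority of the work to lie.
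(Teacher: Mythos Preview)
The statement you are attempting to prove is a \emph{conjecture} in the paper, not a theorem; the paper does not prove it. What the paper does establish is the much weaker Proposition~\ref{easy}, which says that the conjecture holds ``on average'' over $p$: if $\ell_{n,p}$ denotes the length of an interval on which $\log r(G_{n,p})$ is concentrated with high probability, then for $p$ chosen uniformly from $[\epsilon,1-\epsilon]$ the expected value of $\ell_{n,p}$ is $\omega(1)$. The argument there is entirely different from yours: it is the Alon--Krivelevich overlapping-intervals trick, using that the concentration intervals for $G_{n,p}$ and $G_{n,p+c/n}$ must intersect, while the midpoints of $I_{n,\epsilon}$ and $I_{n,1-\epsilon}$ differ by $\Omega(n)$, forcing the average interval length to be $\Omega(c)$.

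Your proposal is not a proof, and you say so yourself. The coupling/martingale-CLT framework you outline is reasonable and, if the one-vertex sensitivity statement held, would indeed give something stronger than the conjecture. But the sensitivity statement is precisely the missing ingredient, and you correctly identify it as the obstruction: there is currently no technique giving a uniform \emph{lower} bound on $|\log r(H\cup v_{N_1}) - \log r(H\cup v_{N_2})|$ for typical $H$ and well-chosen $N_1,N_2$. Everything in the paper (Lemmas~\ref{lem:bounddiff} and~\ref{lem:ESzapp}) goes the other way. So your write-up is an honest sketch of a plausible attack together with a clear statement of where it stalls, not a proof; and since the paper offers no proof either, there is nothing to compare on that front beyond noting that the paper's partial result uses a completely different (and non-constructive, averaging) mechanism.
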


An argument of Alon and Krivelevich~\cite[Section 4]{AK97} shows that this conjecture is true on average in some appropriate sense. First note that for any $c, \epsilon > 0$, there is $\delta > 0$ such that if $\epsilon \leq p \leq 1 - \epsilon$, then any family of graphs on $n$ vertices whose total probability in $G(n,p)$ is at least $1 - \delta$ has probability at least $2\delta$ in $G(n, p + c/n)$. Defining $I_{n, p}$ to be an interval such that $\log r(G_{n,p})$ lies in this interval with probability at least $1 - \delta$, we conclude that $I_{n, p}$ overlaps with $I_{n, p + c/n}$ whenever $\epsilon \leq p \leq 1-\epsilon$. It follows from the main results in \cite{C13} and \cite{S11} that there is a constant $\epsilon>0$ such that, for $p \leq \epsilon$, $\log r(G_{n,p})<n/8$ with high probability. On the other hand, if $p \geq 1-\epsilon$ for $\epsilon>0$ fixed and sufficiently small, a simple application of the probabilistic method (see the proof of Lemma \ref{lem:ESzapp}) implies that $\log r(G_{n,p})>n/4$ with high probability. Therefore, since the midpoints of the intervals $I_{n,\epsilon}$ and $I_{n,1- \epsilon}$ differ by $\Omega(n)$, we must have that the average width of the at most $n/c$ intervals $I_{n, \epsilon}, I_{n, \epsilon + c/n}, I_{n, \epsilon + 2c/n}, \dots$ is $\Omega(c)$. A more careful rendering of this argument gives the following result.

\begin{proposition}\label{easy}
For each $0<p<1$ and positive integer $n$, let $\ell_{n,p}$ be such that $\log r(G_{n,p})$ is with high probability concentrated in an interval of length $\ell_{n,p}$. There is a constant $0<\epsilon<1/2$ such that if $\epsilon \leq p \leq 1-\epsilon$ is taken uniformly at random, then the expected value of $\ell_{n,p}$ is $\omega(1)$. 
\end{proposition}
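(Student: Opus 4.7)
The plan is to formalize the averaging argument sketched in the paragraph preceding the statement. Fix an arbitrary constant $c>0$ and choose $\epsilon \in (0,1/2)$ small enough that both $\log r(G_{n,\epsilon})<n/8$ and $\log r(G_{n,1-\epsilon})>n/4$ hold with high probability, as supplied by the main results of \cite{C13, S11} and by the probabilistic method used in Lemma~\ref{lem:ESzapp}. The Alon--Krivelevich coupling result~\cite{AK97} then provides a constant $\delta=\delta(c,\epsilon)>0$ such that, for every $p \in [\epsilon,1-\epsilon]$, any graph family with probability at least $1-\delta$ in $G_{n,p}$ has probability at least $2\delta$ in $G_{n,p+c/n}$.

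For each $p$ and $n$ sufficiently large, fix an interval $I_{n,p}$ of length $\ell_{n,p}$ containing $\log r(G_{n,p})$ with probability at least $1-\delta$, and write $m_p$ for its midpoint. Applying the coupling to the family $\{H : \log r(H) \in I_{n,p}\}$ gives $\pr[\log r(G_{n,p+c/n}) \in I_{n,p}] \geq 2\delta$, which combined with $\pr[\log r(G_{n,p+c/n}) \in I_{n,p+c/n}] \geq 1-\delta$ forces the two events, and hence the two intervals, to intersect. Thus
\[
|m_{p+c/n}-m_p| \;\leq\; \tfrac{1}{2}\bigl(\ell_{n,p}+\ell_{n,p+c/n}\bigr).
\]

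Telescoping this inequality along the arithmetic progression $p_0, p_0+c/n, \dots, p_0+Kc/n$ starting at any $p_0 \in [\epsilon,\epsilon+c/n]$, with $K$ chosen so that $p_0+Kc/n$ lies in $[1-\epsilon-c/n,1-\epsilon]$ (giving $K=(1-2\epsilon)n/c-O(1)$), delivers $\sum_{k=0}^{K} \ell_{n,p_0+kc/n} \geq |m_{p_0+Kc/n}-m_{p_0}| \geq \Omega(n)$, where the last inequality follows by comparing $m_{p_0}$ and $m_{p_0+Kc/n}$ to $m_\epsilon$ and $m_{1-\epsilon}$ through two further overlap steps and using $m_{1-\epsilon}-m_\epsilon \geq n/8 - o(n)$ from the endpoint estimates (negligible boundary terms such as $\ell_{n,\epsilon}, \ell_{n,1-\epsilon}$ are absorbed into the error, noting that if any of these individual widths is itself $\Omega(n)$ then the conclusion follows from a separate short argument). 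Averaging the resulting bound over $p_0 \in [\epsilon,\epsilon+c/n]$ converts the grid sum into an integral, since as $p_0$ varies the translated grids cover each point of $[\epsilon,1-\epsilon]$ exactly once, producing
\[
\frac{n}{c}\int_{\epsilon}^{1-\epsilon} \ell_{n,p}\,dp \;\geq\; \Omega(n),
\]
so that $\mathbb{E}[\ell_{n,p}] \geq \Omega(c)$ for $p$ uniform on $[\epsilon,1-\epsilon]$. Since $c$ was an arbitrary positive constant, $\mathbb{E}[\ell_{n,p}] = \omega(1)$.

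The main technical step is the passage from the discrete grid-average bound delivered by telescoping to the continuous integral average appearing in the statement; averaging over the starting point $p_0$ is the key trick that bridges this gap. The remaining ingredients --- the coupling lemma, the endpoint Ramsey estimates, and the minor boundary corrections --- are either directly quoted or are routine applications of the overlap inequality established in the second paragraph.
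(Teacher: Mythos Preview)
Your argument is essentially the ``more careful rendering'' the paper alludes to: it follows the same skeleton (Alon--Krivelevich coupling to force overlap of consecutive intervals $I_{n,p}$ and $I_{n,p+c/n}$, endpoint estimates at $p\approx\epsilon$ and $p\approx 1-\epsilon$ from \cite{C13,S11} and the probabilistic lower bound, then telescoping), and correctly supplies the one step the paper leaves implicit, namely the averaging over the initial grid point $p_0\in[\epsilon,\epsilon+c/n]$ to convert the discrete grid bound $\sum_k \ell_{n,p_0+kc/n}\ge\Omega(n)$ into the continuous bound $\int_\epsilon^{1-\epsilon}\ell_{n,p}\,dp\ge\Omega(c)$.

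One small point: your treatment of the boundary terms $\ell_{n,\epsilon},\ell_{n,1-\epsilon}$ via a ``separate short argument'' when they are $\Omega(n)$ is not quite right as stated, since a single value of $p$ has measure zero in the average. The clean fix is either to invoke Theorem~\ref{thm:ramcon}, which gives $\ell_{n,p}=O(\sqrt{n}\log n)=o(n)$ for every fixed $p$, or (more self-contained) to choose $\epsilon$ at half the threshold needed for the endpoint estimates, so that for every $p_0\in[\epsilon,\epsilon+c/n]$ one has $\log r(G_{n,p_0})<n/8$ with high probability directly, giving $m_{p_0}<n/8+\ell_{n,p_0}/2$ without any comparison to $m_\epsilon$. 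Either way the resulting inequality $\tfrac{3}{2}\sum_k\ell_{n,p_0+kc/n}\ge n/8-o(n)$ suffices.
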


We conclude with one further question, concerning $r(G_{n, d/n})$ with $d$ fixed, where the Ramsey number is known~\cite{FS092} to be linear in $n$ with high probability. It is not clear to us which way the truth should lie.

\begin{question}
For fixed $d$, is it the case that
\[|r(G_{n, d/n}) - \mathbb{E}(r(G_{n,d/n}))| = o(n)\]
with high probability?
\end{question}

\section{Ramsey multiplicity and the number of colors} \label{sec:mult}

The {\it Ramsey multiplicity} $M_q(H;n)$ of a graph $H$ is defined to be the minimum number of monochromatic copies of $H$ taken over all $q$-colorings of the edges of the complete graph $K_n$. If $H$ has $a$ automorphisms and $h$ vertices, the {\it Ramsey multiplicity constant} $C_q(H)$ is given by 
\[C_q(H) = \lim_{n \rightarrow \infty} \frac{M_q(H;n)}{\frac{h!}{a} \binom{n}{h}},\]
that is, the limit as $n$ tends to infinity of the minimum proportion of copies of $H$ which are monochromatic in any $q$-coloring of the edges of $K_n$.

Writing $m$ for the number of edges in $H$, a simple upper bound, $C_q(H) \leq q^{1 - m}$, follows from considering a random coloring. When $q = 2$ and $H = K_3$, a classical result of Goodman~\cite{G59} shows that this upper bound is tight. This led Erd\H{o}s~\cite{Er62a}, for cliques, and Burr and Rosta~\cite{BR80}, for general $H$, to conjecture that the upper bound is tight in the $2$-color case. However, in the late eighties, Thomason~\cite{T89} showed that these conjectures fail already for $K_4$. 

A strong quantitative counterexample to the Burr--Rosta conjecture was found by the second author~\cite{F08}. Define the graph $T(k, \ell)$ to be a clique of order $k$ one of whose vertices is joined to $\ell$ otherwise isolated vertices. Color the complete graph on $n$ vertices by splitting its vertex set into $k-1$ pieces of equal size and coloring an edge red if it lies entirely within one of these pieces and blue otherwise. This coloring contains no blue copies of $T(k,\ell)$ and any red copy is contained within one of the $k-1$ clumps. Taking $\ell = (k^2+k)/2$, the number of vertices is $h = (k^2 + 3k)/2$ and the number of edges is $m = k^2$, so taking the limit as $n$ tends to infinity implies that 
\[C_2(T(k,\ell)) \leq (k-1)^{1 - h} \leq m^{-(1+o(1))m/4}.\]  

For three or more colors, this counterexample becomes even more effective. Indeed, take the same graph $T(k,\ell)$ with $\ell = (k^2+k)/2$ and suppose that we have a $(q-1)$-coloring of the complete graph on $r = 2^{(q-2)k/4}$ vertices with no monochromatic copy of $K_k$ (see~\cite[Section 2.1]{CFS15} for the proof that such a coloring exists). Consider the blow-up of this coloring, where each vertex is replaced by a clique of color $q$ with size $n/r$. By construction, the only monochromatic cliques of size $k$ have color $q$ and, therefore, the only monochromatic copies of $T(k, \ell)$ are also in color $q$. Letting $n$ tend to infinity, we see that
\[C_q(T(k,\ell)) \leq r^{1 - h} \leq 2^{-c'_q m^{3/2}}.\]

These upper bounds for $C_2(T(k,\ell))$ and $C_q(T(k, \ell))$ with $q \geq 3$ are both tight up to a constant factor in the exponent. This can be proved using an appropriate generalization of Lemma \ref{triv1} below, though we omit the details for brevity. In particular, we see a substantive difference in behavior between the $2$- and $3$-color cases. An obvious objection might be to say that the graph for which this difference happens is an unusual one. In response to this objection, we prove the following theorem, improving an earlier result, $C_2(H) \geq 2^{-c m^{3/2} \log m}$, of the second author \cite{F08}.  

\begin{theorem} \label{TwoColour}
There exists a constant $c$ such that, for any graph $H$ with $m$ edges,
\[C_2(H) \geq 2^{-c m^{4/3} \log^2 m}.\]
\end{theorem}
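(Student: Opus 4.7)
The plan is to fix a $2$-coloring of $E(K_n)$ with $n$ large, pass to the color class $G$ of edge density at least $1/2$, and exhibit at least $2^{-cm^{4/3}\log^2 m}\, n^h$ labeled copies of $H$ in $G$, where $h=v(H)$; this yields the stated bound on $C_2(H)$ after accounting for the automorphism factor $h!/a$. I would obtain the count by a greedy/random embedding procedure that visits the vertices of $H$ in a carefully chosen order and, at each step, picks an image among the common $G$-neighbors of the images of the already-embedded back-neighbors.

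First I would apply a dependent random choice argument inside $G$ to extract a \emph{rich} subset $U\subseteq V(G)$ of size $n^{1-o(1)}$ with the property that, for each $k$ up to some threshold $r$, almost every $k$-subset $T\subseteq U$ (in a weighted sense) satisfies $|N_G(T)\cap U|\geq |U|^{1-\alpha(k)}$, where $\alpha(k)\sim (\log m)/k$. The parameter $r$ will be set near $m^{1/3}$; increasing $r$ reduces $\alpha$ but shrinks $U$, so the trade-off here governs the final exponent. The usual ``pick a random test set, keep vertices whose neighborhoods contain it'' argument, with weights tuned to the structure of $H$, should yield such a $U$.

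Next I would order the vertices $v_1,\dots,v_h$ of $H$ so as to control the back-degree sequence $d_i=|\{j<i:v_jv_i\in E(H)\}|$ and partition them into \emph{heavy} (those with $d_i\geq r$) and \emph{light} vertices. Since $\sum d_i=m$, there are at most $m/r\sim m^{2/3}$ heavy vertices, while the light vertices contribute total back-degree at most $m$. Embedding greedily into $U$ and bounding the number of valid choices at step $i$ below by $|U|\cdot |U|^{-\alpha(d_i)}$, via the rich-set property, the total count of labeled embeddings is at least $|U|^h\cdot 2^{-\Sigma}$ with $\Sigma=\sum_i\alpha(d_i)\log|U|$. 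Summing separately over the two ranges and optimizing at $r\approx m^{1/3}$ gives $\Sigma=O(m^{4/3}\log^2 m)$, matching the target exponent.

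The main technical hurdle will be handling the \emph{moderate}-back-degree vertices whose $d_i$ sits near the threshold $r$, where neither the trivial bound $|U|$ nor the dependent random choice bound is sharp; I expect that a dyadic decomposition of the back-degree range, with the dependent random choice parameters tailored to each scale, will resolve this cleanly and supply the second $\log m$ factor in $\log^2 m$. A secondary difficulty is guaranteeing that after iteratively restricting to common neighborhoods inside $U$ the residual set remains large enough for the next embedding step; this forces the dependent random choice to be carried out with enough margin that conditioning on the already-placed images still leaves exponentially many valid choices. Subject to these calibrations, the final estimate $C_2(H)\geq 2^{-cm^{4/3}\log^2 m}$ should drop out of the telescoped product over the embedding steps.
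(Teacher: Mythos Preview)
Your high-level split into roughly $m^{2/3}$ heavy vertices and the remaining light vertices of back-degree at most $\approx m^{1/3}$ matches the paper's decomposition. The genuine gap is in how you embed the light vertices. Dependent random choice only guarantees that \emph{most} $k$-subsets of the rich set $U$ have a large common neighborhood, not \emph{every} $k$-subset; a greedy embedding, however, must succeed for the specific $k$-tuple of already-placed back-neighbors, and your sketch has no mechanism preventing the process from walking into a bad configuration. Your proposed remedy (``carry out DRC with enough margin'') is exactly the hard part: forcing all small subsets to be good shrinks $U$ too aggressively. Your formula $\alpha(k)\sim(\log m)/k$ is also inverted---it makes $|N(T)|$ larger as $|T|$ grows, the opposite of what DRC actually delivers. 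Pure DRC plus greedy embedding is essentially the route of Fox's earlier paper and yields exponent $m^{3/2}$, not $m^{4/3}$.

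The paper gets to $m^{4/3}$ by replacing the greedy step for light vertices with a \emph{dichotomy}. Inside any set $U$ of size $\ge 2^{-O(m^{1/3}\log^2 m)}n$, either one color is bi-$(\sigma,\delta)$-dense with $\delta=m^{-3}$, in which case a Graham--R\"odl--Ruci\'nski embedding lemma places each light vertex with per-vertex loss only $\delta^{2m^{1/3}}=2^{-O(m^{1/3}\log m)}$ (total $2^{-O(m^{4/3}\log^2 m)}$ over $\le 2m$ light vertices); or bi-density fails for both colors, and then the Fox--Sudakov ``sparse bipartite implies sparse'' lemma produces a subset of size $2^{-O(m^{1/3}\log^2 m)}n$ with edge density $\le m^{-3}$ in one color, so a random $h$-tuple there is a monochromatic clique in the other color with probability $\ge 1/2$. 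DRC is used only once, to embed the heavy part as a monochromatic $K_{m^{2/3}}$ in which every $2m^{1/3}$-subset has $\ge 2^{-O(m^{1/3}\log m)}n$ same-colored common neighbors; this anchors the bi-dense embedding of the light part. The $m^{4/3}$ exponent comes from this interplay, which your proposal is missing.
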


So the difference in behavior is a very real one: there is no graph on $m$ edges which has a $2$-color multiplicity constant as small as $2^{-c m^{3/2}}$. Along with~\cite{CFR17}, this is one of the first results where there is a provable quantitative difference between the $2$-color case and the $q$-color case for some $q \geq 3$.

To prove Theorem~\ref{TwoColour}, we use a mixture of two methods, both of which have become mainstays of graph Ramsey theory. The first, known as dependent random choice (see the survey~\cite{FS11}), shows that every dense graph contains a large set of vertices $A$ with the property that almost all small subsets of $A$ have many common neighbors. The following lemma, a routine variant of \cite[Lemma 2.1]{FS09} (see also~\cite[Lemma 1]{C09}), 
will suffice for our purposes. Note that we write $N(S)$ to denote the joint neighborhood of a set $S$, the collection of vertices joined to every vertex in $S$.

\begin{lemma} \label{DRC}
Let $G = (U, V; E)$ be a bipartite graph with at least $\epsilon |U||V|$ edges. Then, for all positive integers $a, t, x$, there is a subset $A \subset U$ with $|A| \geq 2^{-1/a} \epsilon^t |U|$ such that all $a$-sets $S$ in $A$ but at most $2 \epsilon^{-ta} \left(\frac{x}{|V|}\right)^t |A|^a/a!$  have $|N(S)| \geq x$.
\end{lemma}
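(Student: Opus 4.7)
The plan is to use dependent random choice. I would sample $T = (v_1, \dots, v_t) \in V^t$ uniformly with replacement, and let $A = N(T) \cap U$ be the common neighborhood of $T$ in $U$. A vertex $u \in U$ lies in $A$ exactly when every $v_i$ is a neighbor of $u$, which happens with probability $(d(u)/|V|)^t$. Summing over $u$ and using convexity of $z \mapsto z^t$ together with the edge bound $e(G) \geq \epsilon|U||V|$ gives
\[\mathbb{E}[|A|] = \sum_{u \in U}\left(\frac{d(u)}{|V|}\right)^t \geq |U|\left(\frac{e(G)}{|U|\,|V|}\right)^t \geq \epsilon^t|U|.\]
Since $|A| \geq 0$ and $z \mapsto z^a$ is convex, Jensen's inequality upgrades this to $\mathbb{E}[|A|^a] \geq \epsilon^{ta}|U|^a$.

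Next I would control the number of \emph{bad} $a$-sets, i.e., $a$-element $S \subset U$ with $|N(S)| < x$. For any such $S$, one has $\mathbb{P}[S \subset A] = (|N(S)|/|V|)^t < (x/|V|)^t$, so letting $Y$ denote the number of bad $a$-subsets of $A$, linearity of expectation yields
\[\mathbb{E}[Y] \leq \binom{|U|}{a}\left(\frac{x}{|V|}\right)^t \leq \frac{|U|^a}{a!}\left(\frac{x}{|V|}\right)^t.\]

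The final step --- and the one move that is not automatic --- is to balance these two estimates through a single linear functional, so that the bound on bad sets emerges in terms of $|A|^a$ rather than $|U|^a$. Setting
\[C = \frac{a!\,\epsilon^{ta}}{2\,(x/|V|)^t},\]
we get $\mathbb{E}[|A|^a - CY] \geq \epsilon^{ta}|U|^a - \tfrac{1}{2}\epsilon^{ta}|U|^a = \tfrac{1}{2}\epsilon^{ta}|U|^a$, so some outcome satisfies $|A|^a - CY \geq \tfrac{1}{2}\epsilon^{ta}|U|^a$. Nonnegativity of $CY$ forces $|A|^a \geq \tfrac{1}{2}\epsilon^{ta}|U|^a$, hence $|A| \geq 2^{-1/a}\epsilon^t|U|$; and nonnegativity of $|A|^a - CY$ forces $Y \leq |A|^a/C = 2\epsilon^{-ta}(x/|V|)^t|A|^a/a!$, which is exactly the stated bound. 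The routine first-moment calculations above are easy; the only point requiring care is choosing $C$ so that working with $|A|^a$ (not $|A|$) lets both conclusions drop out of a single favorable sample.
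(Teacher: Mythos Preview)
Your proof is correct and is precisely the standard dependent random choice argument: the paper does not give its own proof of this lemma but cites it as a routine variant of \cite[Lemma~2.1]{FS09} (see also~\cite[Lemma~1]{C09}), and what you have written is exactly that variant. The only nonstandard wrinkle in the statement is that the exceptional count is phrased in terms of $|A|^a$ rather than $|U|^a$, and your trick of applying Jensen to pass to $\mathbb{E}[|A|^a]$ and then optimizing the linear combination $|A|^a - CY$ handles this cleanly.
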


It will also be useful to record the following result.

\begin{lemma}\label{triv1}
Let $k$ and $n$ be positive integers with $n$ sufficiently large. For every red/blue edge coloring of $K_n$, there is a vertex subset $U$ with $|U| \geq n/3$ vertices and a color such that 
\begin{itemize} 
\item there are at least $12^{-k}{n \choose k}$ monochromatic $K_k$ in $U$ in that color and every such monochromatic $K_k$ extends to at least $n/2$ monochromatic $K_{k+1}$, or 
\item every vertex in $U$ has both red degree and blue degree at least $D:=(n-3)/12k$. 
\end{itemize}
\end{lemma}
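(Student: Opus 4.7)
The plan is to partition $V(K_n)$ by degree type and argue by cases. Write $d_R(v)$ and $d_B(v)$ for the red and blue degrees of $v$ in $K_n$, and set $V_R = \{v : d_R(v) < D\}$ and $V_B = \{v : d_B(v) < D\}$; these sets are disjoint since $d_R(v) + d_B(v) = n-1 > 2D$. If the set $U_0 := V(K_n) \setminus (V_R \cup V_B)$ has at least $n/3$ elements, then taking $U = U_0$ immediately gives the second alternative, as by construction every vertex of $U$ has both red and blue degree at least $D$.

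In the complementary case $|V_R| + |V_B| > 2n/3$, by symmetry I may assume $|V_R| \geq n/3$ and will take $U := V_R$, aiming to verify the first alternative with the color blue. The key observation is that every $v \in U$ has fewer than $D$ red neighbors in all of $K_n$; hence the blue graph induced on $U$ has minimum degree at least $|U| - D$, and moreover each vertex of $U$ has more than $n-1-D$ blue neighbors in $K_n$.

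To count blue $K_k$'s in $U$, I would proceed greedily: having built a blue clique $\{v_1,\dots,v_j\} \subseteq U$, the number of valid choices for $v_{j+1}$ inside $U$ is at least $|U| - jD$, since at most $jD$ vertices of $U$ are either some $v_i$ or a red-neighbor of some $v_i$. Using $(k-1)D \leq kD = (n-3)/12$ and $|U| \geq n/3$, this lower bound is at least $n/3 - n/12 = n/4$ at every stage. Multiplying over $j=0,1,\dots,k-1$ and dividing by $k!$ then yields at least $(n/4)^k/k! \geq 12^{-k}\binom{n}{k}$ blue $K_k$'s in $U$, since $\binom{n}{k}\leq n^k/k!$.

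Finally, for the extension step, any such blue $K_k = \{u_1,\dots,u_k\} \subseteq U$ has common blue neighborhood in all of $K_n$ of size at least $n - kD \geq 11n/12 \geq n/2$, so it extends to at least $n/2$ blue $K_{k+1}$'s and the first alternative holds. The argument faces no real obstacle beyond tracking constants; it rests on the single observation that the vertices of small red degree cluster into a set on which blue cliques both proliferate in number and extend widely in $K_n$.
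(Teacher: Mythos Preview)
Your proof is correct and follows essentially the same approach as the paper's: the identical three-way partition of vertices by degree type, and then, on the set of vertices with small degree in one color, the observation that the other color is so dense that many $K_k$'s exist and each extends widely. The only difference is that the paper counts these cliques by a random-sampling argument (pick $2k$ vertices, expect few minority-color edges), whereas you build them greedily one vertex at a time; note as a harmless bookkeeping nit that your excluded set at step $j$ should really be bounded by $j(1+D)$ rather than $jD$, and the common blue neighborhood by $n-k-kD$ rather than $n-kD$, but both slips are absorbed by the assumption that $n$ is sufficiently large.
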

\begin{proof}
Each vertex has degree at least $(n-1)/2 \geq D$ in some color. So at least one of the following holds: there are at least $n/3$ vertices with both red degree and blue degree at least $D$ (in which case we are done); there are at least $n/3$ vertices of red degree at least $D$ and blue degree less than $D$; or there are at least $n/3$ vertices of blue degree at least $D$ and red degree less than $D$. 

Without loss of generality, we may assume that there are at least $n/3$ vertices of red degree at least $D$ and blue degree less than $D$. We let $U$ be the set of such vertices. The density of blue edges in $U$ is at most $\frac{D|U|/2}{{|U| \choose 2}} \leq \frac{1}{4k}$. Consider a random set $T$ of $2k$ vertices in $U$. The expected number of blue edges in $T$ is thus at most $\frac{1}{4k}{2k \choose 2} \leq \frac{k}{2}$. Thus, with probability at least $1/2$, the number of blue edges in $T$ is at most $k$ and we can delete one vertex from each blue edge in $T$ so that $T$ contains a red $K_k$. It follows that the number of red $K_k$ in $U$ is at least 
$\frac{1}{2}{|U| \choose 2k}/{|U|-k \choose k}=\left(2{2k \choose k}\right)^{-1} {|U| \choose k}$. The lower bound on $|U|$ and the fact that $n$ is sufficiently large implies the desired lower bound on the number of monochromatic red $K_k$ in $U$. Each such red $K_k$ extends to at least $n-k-kD \geq n/2$ red $K_{k+1}$. 
\end{proof}

For our application of dependent random choice, we require some further notation. Recall that a \emph{hypergraph} $\mathcal{F} = (V, E)$ is a vertex set $V$ together with a collection of subsets $E$ of $V$, known as the \emph{edges}. A hypergraph is said to be \emph{$k$-uniform} if all of the edges have the same size $k$. A hypergraph $\mathcal{F} = (V, E)$ is said to be \emph{down-closed} if $e_1 \subset e_2$ and $e_2 \in E$ implies $e_1 \in E$.

\begin{lemma} \label{DRCApplied}
Suppose that the edges of $K_n$ are two-colored in red and blue. Then, provided $n$ is sufficiently large, there are at least $2^{-50 m^{4/3} \log m} \binom{n}{m^{2/3}}$ monochromatic copies of $K_{m^{2/3}}$ such that, for each such copy $K$, every vertex subset $L$ of $K$ of size $2 m^{1/3}$ has at least $2^{-50 m^{1/3} \log m} n$ vertices connected to each element of $L$ in the same color as the clique.
\end{lemma}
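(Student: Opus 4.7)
The plan is to combine Lemma~\ref{triv1} with dependent random choice (Lemma~\ref{DRC}). First I would apply Lemma~\ref{triv1} with $k = m^{2/3}$ to the given $2$-coloring of $K_n$. In its first alternative we directly obtain a vertex set $U$ together with a color (say red) and at least $12^{-k}\binom{n}{k}$ red copies of $K_k$ inside $U$, each extending to at least $n/2$ red copies of $K_{k+1}$. Every such $K_k$ therefore has common red neighborhood of size at least $n/2$, so every $2m^{1/3}$-subset of it does as well; since $n/2 > 2^{-50 m^{1/3}\log m}\, n$ and $12^{-k}\geq 2^{-50 m^{4/3}\log m}$ for $m$ large, the required count follows immediately in this case.

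In the second alternative, we have $U$ of size at least $n/3$ in which every vertex has both red and blue degree at least $D = (n-3)/(12k)$ in $[n]$. I would then apply Lemma~\ref{DRC} to the bipartite graph of red edges between $U$ and $[n]$, which has density at least $D/n \geq 1/(13 m^{2/3})$, with $a = 2m^{1/3}$, $x = 2^{-50 m^{1/3}\log m}\, n$, and $t$ chosen to balance the size of $A$ against the number of bad $a$-subsets. This yields $A_R \subseteq U$ with $|A_R| \geq 2^{-O(m^{2/3}\log m)}\, n$ such that only a negligible fraction of the $2m^{1/3}$-subsets of $A_R$ have fewer than $x$ common red neighbors in $[n]$. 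The symmetric application with blue yields an analogous set $A_B$.

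The final step is to extract at least $2^{-50 m^{4/3}\log m}\binom{n}{k}$ red copies of $K_k$ inside $A_R$ (or blue copies of $K_k$ inside $A_B$); the $a$-subset condition from DRC then automatically transfers to these cliques in the matching color. Exploiting the min-degree inherited from $U$, a further application of Lemma~\ref{triv1} to the induced $2$-coloring on $A_R$ (and symmetrically on $A_B$) should yield enough monochromatic $K_k$'s, and we subtract those containing a bad $a$-subset using the fact that each bad $a$-subset lies in at most $\binom{|A_\bullet|}{k-a}$ copies of $K_k$, which with the tuned parameters is negligible against the main term. The main obstacle I anticipate is color compatibility at this last step: the monochromatic $K_k$'s extracted from $A_R$ may be blue rather than red, in which case the DRC property on $A_R$ (controlling red common neighborhoods) is the wrong condition. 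Resolving this requires combining the two DRC applications carefully, either by iterating Lemma~\ref{triv1} until a matching color appears or by arguing that at least one of $A_R, A_B$ must yield cliques in a compatible color.
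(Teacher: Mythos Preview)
Your handling of the first alternative of Lemma~\ref{triv1} and the overall shape of the argument are right, but the color-compatibility obstacle you flag at the end is genuine, and neither of your proposed fixes addresses it. There is no reason the second application of Lemma~\ref{triv1} to $A_R$ should produce red cliques: $A_R$ was selected only for having many red edges \emph{into $[n]$}, and its internal colouring is uncontrolled, so it may well be almost entirely blue inside; symmetrically $A_B$ may be almost entirely red inside. ``Iterating'' Lemma~\ref{triv1} does not help, since that lemma never lets you choose the colour, and you may keep landing in its second alternative without ever producing cliques.

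The missing idea, which the paper uses, is to \emph{nest} the two applications of Lemma~\ref{DRC} rather than run them in parallel. First apply red DRC to $U$ to obtain $A$ in which almost every $a$-subset has a large common red neighbourhood; then, using that every vertex of $A\subset U$ still has blue degree at least $D$, apply blue DRC \emph{inside $A$} to obtain $A'\subset A$. Now almost every $a$-subset of $A'$ is simultaneously good for red and for blue (the red-bad $a$-tuples inherited from $A$ remain a negligible fraction because $|A'|\ge 2^{-10m^{2/3}\log m}|A|$). At this point one does not re-apply Lemma~\ref{triv1}; the paper instead greedily embeds $2m^{2/3}$ vertices one at a time in $A'$, at each step choosing a vertex whose colour to the remaining pool is fixed (by majority) and which keeps every $\le a$-subset of the embedded vertices ``good''. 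Among these $2m^{2/3}$ vertices one finds a monochromatic $K_{m^{2/3}}$, and because $A'$ is good for both colours the extension property holds regardless of which colour the clique turns out to be. The counting then drops out of the number of valid choices at each embedding step.
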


\begin{proof}
Let $k=m^{2/3}$ and apply Lemma \ref{triv1} to obtain the promised set $U$ of at least $n/3$ vertices. In the first case, the monochromatic $K_k$ in $U$ give the desired set of monochromatic $K_k$. So we may assume that every vertex in $U$ has both red degree and blue degree at least $D=(n-3)/12k$.

Let $\epsilon =1/13k$, $a = 2 m^{1/3}$, $t = m^{2/3}$ and $x = 2^{-50 m^{1/3} \log m} n$. 
We now apply Lemma \ref{DRC} to the bipartite graph with parts $U$ and $V(K_n)$, where $(u,v)$ is an edge if it is red in the given edge coloring, with the parameters above to find a set $A \subset U$ of size at least $2^{-10 m^{2/3} \log m} n$ such that all but $2^{-30 m \log m}|A|^a/a!$ of the $a$-tuples have at least $2^{-50 m^{1/3} \log m} n$ common red neighbors.

Now, since $A \subset U$, every vertex in $A$ has blue degree at least $D$. Therefore, applying Lemma \ref{DRC} with the same parameters as before we find a set $A' \subset A$ of size at least $2^{-10 m^{2/3} \log m} |A| \geq 2^{-20 m^{2/3} \log m} n$ such that all but $2^{-30 m \log m}|A'|^a/a!$ of the $a$-tuples have at least $2^{-50 m^{1/3} \log m} n$ common blue neighbors. In summary, since $|A| \leq 2^{10 m^{2/3} \log m} |A'|$ and $a=2 m^{1/3}$, we have a set $A'$ of size at least $2^{-20 m^{2/3} \log m} n$ such that all $a$-tuples in $A'$ but at most 
\[2^{-30 m \log m}|A|^a/a! + 2^{-30 m \log m}|A'|^a/a! \leq 2^{-8 m \log m}|A'|^a/a!\] 
have $2^{-50 m^{1/3} \log m} n$ common red neighbors and $2^{-50 m^{1/3} \log m} n$ common blue neighbors.

Let $\mathcal{F}$ be the down-closed hypergraph with vertex set $A'$ whose maximal edges consist of the at least $\left(1 - 2^{-8 m \log m}\right) |A'|^a/a!$ sets of size $a$ which have appropriately large common neighborhoods in both red and blue. We fix $d = 2^{2 m^{2/3}} \binom{2 m^{2/3}}{2m^{1/3}}$. Call a set $S \subset A'$ of size $|S| \leq a$ good if $S$ is contained in more than $(1 - (4d)^{|S| - a})|A'|^{a-|S|}/(a-|S|)!$ edges of $\mathcal{F}$ of cardinality $a$. For any good set $S$ with $|S| < a$ and any vertex $j \in A' \setminus S$, call $j$ bad with respect to $S$ if $S \cup \{j\}$ is not good. Let $B_S$ denote the set of vertices $j \in A'\setminus S$ that are bad with respect to $S$. We claim that if $S$ is good with $|S| < a$, then $B_S \leq |A'|/4d$. Indeed, suppose that $|B_S| > |A'|/4d$. Then the number of $a$-sets containing $S$ that are not edges of $\mathcal{F}$ is at least
\[\frac{|B_S|}{a - |S|} (4d)^{|S| + 1 - a}\frac{|A'|^{a - |S| - 1}}{(a - |S| - 1)!} > (4d)^{|S|-a}\frac{|A'|^{a - |S|}}{(a - |S|)!},\]
which would be a contradiction.

We will now prove that the hypergraph $\mathcal{F}$ contains many copies of the complete $2m^{1/3}$-uniform hypergraph with $m^{2/3}$ vertices such that the graph induced by the vertices of each copy is monochromatic. We will embed $2m^{2/3}$ vertices, vertex by vertex, maintaining three conditions. More specifically, we will inductively find an embedding $f$ of $v_1, v_2, \cdots, v_i$ and a disjoint vertex subset $V_i$ satisfying the following conditions. The first condition is that if we have embedded $i$ vertices $v_1, v_2, \cdots, v_i$, then every subset of these vertices of size less than or equal to $2m^{1/3}$ is good. The second condition is that for every $j, k$ with $1 \leq j < k \leq i$, the color of $f(v_j) f(v_k)$ is determined by the value of $j$. Moreover, the color of the edge from $f(v_j)$ to any element of $V_i$ is the same color. Thirdly, we will maintain that $|V_i| \geq 2^{-i} |A'|$.

To verify the induction hypothesis at $i = 0$, we only have to verify that all empty sets are good, since the second and third conditions follow trivially by taking $V_0 = A'$. However, the empty set is good, since
\[\left(1 - 2^{-8 m \log m}\right)|A'|^a/a! \geq \left(1 - (4d)^{-a}\right) |A'|^a/a!.\]
We may therefore assume that $v_1, v_2, \cdots, v_i$ have been embedded as required and we are trying to embed $v_{i+1}$.

We may suppose, without loss of generality, that there are $|V_i|/2$ vertices in $V_i$ with red degree at least $|V_i|/2$. Moreover, there are at most $\binom{2 m^{2/3}}{2 m^{1/3}}$ subsets of $v_1, v_2, \cdots, v_i$ of size at most $2 m^{1/3}-1$. By induction, all of these sets have a good embedding. For any such subset $S$, there are, therefore, at most $|A'|/4d$ bad vertices with respect to it. But now, since $d = 2^{2 m^{2/3}} \binom{2 m^{2/3}}{2 m^{1/3}}$,
\[\frac{|A'|}{4d} \leq 2^{2 m^{2/3}} \frac{|V_i|}{4d} \leq \frac{|V_i|}{4 \binom{2 m^{2/3}}{2 m^{1/3}}}.\]
Thus, adding the numbers of bad vertices with respect to all the sets to which we would like to append $v_{i+1}$, we obtain that there are at most $|V_i|/4$ vertices which are bad for one of these sets. We may, therefore, let $f(v_{i+1})$ be any of the remaining $\frac{|V_i|}{4}$ vertices which have $|V_i|/2$ red neighbors in $V_i$ and are good for each of the sets $S$. Letting $V_{i+1}$ be the red neighborhood in $V_i$ of $f(v_{i+1})$ completes the induction.

Since we continue this process until we have $2 m^{2/3}$ vertices, some subset $w_1, w_2, \cdots, w_{m^{2/3}}$ must form a monochromatic clique. How many monochromatic cliques do we form through this process? At the very worst,
\begin{align*}
\frac{1}{(2m^{2/3})! n^{m^{2/3}}} \frac{|V_0|}{4} \times \frac{|V_1|}{4} \times \cdots \times \frac{|V_{2m^{2/3}-2}|}{4} \times \frac{|V_{2m^{2/3}-1}|}{4} & \geq 2^{-6m^{2/3} \log m} n^{-m^{2/3}} \left(\frac{|A'|}{2^{2m^{2/3}}}\right)^{2m^{2/3}}\\ 
& \geq 2^{-50 m^{4/3} \log m} \binom{n}{m^{2/3}},
\end{align*}
as required. This completes the proof.
\end{proof}

The second technique we use goes back to work of Graham, R\"{o}dl and Ruci\'nski~\cite{GRR00}, though similar ideas are implicit in earlier work~\cite{KPR98}. Let $G$ be a graph on vertex set $V$ and let $X, Y$ be two subsets of $V$. Define $e(X,Y)$ to be the number of edges between $X$ and $Y$. The \emph{density} of the pair $(X, Y)$ is $d(X,Y) = e(X,Y)/|X||Y|$.
The graph $G$ is said to be {\it bi-$(\sigma, \delta)$-dense} if, for all $X, Y \subset V$ with $X \cap Y = \emptyset$ and $|X|, |Y| \geq \sigma |V|$, we have $d(X,Y) \geq \delta$. It was shown by Graham, R\"{o}dl and Ruci\'nski \cite[Lemma 2]{GRR00} that if $\sigma$ is sufficiently small in terms of $\delta$ and the maximum degree $\Delta$ of a fixed graph $H$, then a sufficiently large bi-$(\sigma, \delta)$-dense graph $G$ will contain a copy of $H$. The following lemma is a counting version of their result with restrictions on where each vertex can be embedded.

\begin{lemma} \label{Embedding}
Let $\alpha, \delta > 0$ be real numbers. If $G$ is a bi-$(\alpha \delta^{\Delta}/2t^2, \delta)$-dense graph with $n$ vertices, then $G$ contains at least $\left(\alpha \delta^\Delta n/2t^2\right)^{t}$ copies of any graph $H$ with $t$ vertices and maximum degree $\Delta$ such that each vertex $w$ of $H$ is mapped to a prespecified vertex subset $V_w$ of size at least $\alpha n$.
\end{lemma}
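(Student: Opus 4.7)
The plan is to use a greedy embedding that processes the vertices of $H$ in a fixed order $w_1, \ldots, w_t$ and tracks, for each not-yet-embedded vertex $w_k$, a ``candidate set'' of admissible targets. Setting $\sigma := \alpha\delta^{\Delta}/(2t^2)$, for every partial embedding $f \colon \{w_1, \ldots, w_i\} \to V(G)$ with $f(w_j) \in V_{w_j}$ and $f(w_j)f(w_{j'}) \in E(G)$ whenever $w_jw_{j'} \in E(H)$, define
$$C_k^{(i)} := V_{w_k} \cap \bigcap_{\substack{j \leq i \\ w_jw_k \in E(H)}} N(f(w_j)) \qquad (k > i),$$
and write $D_k^{(i)}$ for the number of already-embedded neighbors of $w_k$, so that $D_k^{(t-1)} \leq \Delta$. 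The quantitative invariant I will maintain is $|C_k^{(i)}| \geq \tfrac{1}{2}\alpha\delta^{D_k^{(i)}}n$ for every $k > i$, which holds at $i=0$ since $|V_{w_k}| \geq \alpha n$.

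The heart of the argument is the inductive step: from any partial embedding of length $i-1$ satisfying the invariant, there are at least $\sigma n$ choices of $f(w_i) \in C_i^{(i-1)}$ whose extension still satisfies it. Granting this, an immediate induction yields at least $(\sigma n)^t = (\alpha\delta^{\Delta}n/(2t^2))^t$ valid embeddings of $H$ in total, as required.

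To establish the inductive step, for each neighbor $w_k$ of $w_i$ with $k > i$ I would declare $v \in C_i^{(i-1)}$ \emph{bad for $w_k$} if $|C_k^{(i-1)} \cap N(v)| < \delta\,|C_k^{(i-1)}|$, and let $B_k$ be the set of such $v$. The invariant gives $|C_i^{(i-1)}|, |C_k^{(i-1)}| \geq \tfrac{1}{2}\alpha\delta^{\Delta}n = t^2\sigma n$. If $|B_k| \geq \sigma n$, then the bi-$(\sigma, \delta)$-density hypothesis applied to the (disjoint) sets $B_k \subset V_{w_i}$ and $C_k^{(i-1)} \subset V_{w_k}$ forces $e(B_k, C_k^{(i-1)}) \geq \delta\,|B_k|\,|C_k^{(i-1)}|$, in direct contradiction to the defining inequality of $B_k$; hence $|B_k| < \sigma n$. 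Since $w_i$ has at most $\Delta$ such neighbors, a union bound leaves at least $|C_i^{(i-1)}| - \Delta\sigma n \geq (t^2 - \Delta)\sigma n \geq \sigma n$ good choices, each of which automatically satisfies $|C_k^{(i)}| \geq \delta|C_k^{(i-1)}| \geq \tfrac{1}{2}\alpha\delta^{D_k^{(i)}}n$ for every relevant $k$, preserving the invariant.

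The main point of care will be the disjointness required to invoke bi-density on $B_k$ and $C_k^{(i-1)}$. When the prescribed $V_w$'s are pairwise disjoint this is automatic; in the general case, I would preprocess by replacing each $V_w$ with $V_w \cap P_w$ for a uniformly random partition $V(G) = P_1 \cup \cdots \cup P_t$, so that the resulting sets are pairwise disjoint and of size $\Omega(\alpha n/t)$ with positive probability by a Chernoff bound, absorbing the resulting polynomial factor in $t$ into the constants.
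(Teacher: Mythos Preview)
Your approach is essentially the same as the paper's: a greedy vertex-by-vertex embedding, maintaining for each not-yet-embedded $w_k$ a candidate set whose size is at least $\delta^{D_k^{(i)}}$ times the size of its initial target set, and using the bi-$(\sigma,\delta)$-density hypothesis to bound the number of ``bad'' choices at each step. The paper likewise preprocesses the $V_w$ into pairwise disjoint subsets of order $|V_w|/t$ before running exactly this argument.

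One small point worth tightening: your main calculation (yielding $(t^2-\Delta)\sigma n$ good choices) is carried out \emph{before} the disjointness preprocessing, using $|C_i^{(i-1)}|\ge \tfrac12\alpha\delta^{\Delta}n$. After replacing each $V_w$ by a subset of size $\Omega(\alpha n/t)$, the invariant drops by a factor of $t$, so the surplus becomes $(ct-\Delta)\sigma n$ rather than $(t^2-\Delta)\sigma n$. This still gives at least $\sigma n$ good choices provided the constant from Chernoff satisfies $c\ge 1/2$ (since $\Delta\le t-1$), which is exactly how the paper's arithmetic runs; your phrase ``absorbing the resulting polynomial factor in $t$ into the constants'' is correct but should be made explicit so that the stated bound $(\alpha\delta^{\Delta}n/2t^2)^t$ is recovered on the nose rather than up to an unspecified polynomial.
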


\begin{proof}
Our initial aim will be to find a single embedding $f$ of $H$ in $G$ so that $f(w) \subset V_w$ for all $w \in V(H)$. By replacing each $V_w$ with a set of order $|V_w|/t$, we may assume that the $V_w$ are disjoint. Let the vertices of $H$ be $\{w_1, w_2, \cdots, w_t\}$. For each $1 \leq h \leq t$, let $L_h = \{w_1, w_2, \cdots, w_h\}$. If the vertices of $L_h$ have been embedded, then, for each $y \in V(H) \setminus L_h$, we let $T_y^h$ be the set of vertices in $V_y$ which are adjacent to all already embedded neighbors of $y$. That is, letting $N_h (y) = N(y) \cap L_h$, $T_y^h$ is the set of vertices in $V_y$ adjacent to each element of $f(N_h (y))$. We will find, by induction, an embedding of $L_h$ such that, for each $y \in V(H) \setminus L_h$, $|T_y^h| \geq \delta^{|N_h(y)|} |V_y|$. 

For $h = 0$, there is nothing to prove. We may therefore assume that $L_h$ has been embedded consistent with the induction hypothesis and attempt to embed $w = w_{h+1}$ into an appropriate $v \in T_w^h$. Let $Y$ be the set of neighbors of $w$ which are not yet embedded. We wish to find an element $v \in T_w^h$ such that, for all $y \in Y$, $|N(v) \cap T_y^h| \geq \delta |T_y^h|$. If such a vertex $v$ exists, taking $f(w) = v$ will then complete the proof.

Let $B_y$ be the set of vertices in $T_w^h$ which are bad for $y \in Y$, that is, such that $|N(v) \cap T_y^h| < \delta |T_y^h|$. Note that, by the induction hypothesis, $|T_y^h| \geq \delta^{\Delta} |V_y| \geq \alpha \delta^\Delta n/t$. Therefore, $|B_y| < \alpha \delta^{\Delta} n/2t^2$, for otherwise the density between the sets $B_y$ and $T_y^h$ would be less than $\delta$, contradicting the bi-density condition. Hence,
\[\left| T_w^h \setminus \cup_{y \in Y} B_y \right| > \frac{\alpha \delta^\Delta}{t} n - t \frac{\alpha \delta^\Delta}{2t^2} n \geq \frac{\alpha \delta^\Delta}{2t} n\]
and an appropriate choice for $f(w)$ exists. The result therefore follows by induction. The counting result also follows straightforwardly since there are at least $\frac{\alpha \delta^\Delta}{2t} n$ choices for each of the $t$ vertices and each copy of $H$ can be counted at most $t! \leq t^t$ times.
\end{proof}

Thus, if we are looking for monochromatic copies of $H$ in a red/blue edge coloring of $K_n$ and we cannot find a large number of red copies, there must be a large bipartite graph where the blue density is very high. If we iterate this observation, we might hope to find a sequence of such bipartite graphs, enough to build up a complete graph with very high blue density. Precisely this idea was pursued by Graham, R\"{o}dl and Ruci\'nski, though we use a streamlined approach due to Fox and Sudakov \cite{FS08}. We start with some notation and a definition.

For a graph $G = (V, E)$ and disjoint subsets $W_1, \cdots, W_t \subset V$, the \emph{density} $d_G(W_1, \cdots, W_t)$ between the $t \geq 2$ vertex subsets $W_1, \cdots, W_t$ is defined by 
\[d_G(W_1, \cdots, W_t) = \frac{\sum_{i < j} e(W_i, W_j)}{\sum_{i<j} |W_i||W_j|}.\]

\begin{definition}
For $\alpha, \sigma, \delta \in [0,1]$ and positive integer $t$, a sequence $(G_1, \cdots, G_r)$ of graphs on the same vertex set $V$ is $(\alpha, \sigma, \delta, t)$-sparse if for all subsets $U \subset V$ with $|U| \geq \alpha |V|$, there are positive integers $t_1, \cdots, t_r$ such that $\prod_{i=1}^r t_i \geq t$ and for each $i \in [r] = \{1, 2, \cdots, r\}$, there are disjoint subsets $W_{i,1}, \cdots, W_{i,t_i} \subset U$ with $|W_{i,1}| = \cdots = |W_{i,t_i}| = \lceil \sigma |U| \rceil$ and $d_{G_i}(W_{i,1}, \cdots, W_{i,t_i}) \leq \delta$.
\end{definition}

The fundamental lemma that we will need to apply is the following~\cite[Corollary 3.4]{FS08}.

\begin{lemma} \label{SpBipImpliesSp}
If $(G_1, \cdots, G_r)$ is $\left((\frac{\sigma}{2})^{h-1}, \sigma, \frac{\delta}{8}, 2\right)$-sparse where $h = r \log \frac{2}{\delta}$, then there is $i \in [r]$ and an induced subgraph $G'$ of $G_i$ on $2 \delta^{-1} 2^{1-h} \sigma^h |V|$ vertices that has edge density at most $\delta$.
\end{lemma}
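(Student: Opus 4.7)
The plan is to argue by contradiction. Suppose that for every $i \in [r]$, every induced subgraph of $G_i$ on at least $N := 2\delta^{-1}2^{1-h}\sigma^h|V|$ vertices has edge density strictly greater than $\delta$. Starting from $U_0 = V$, we build a nested sequence $U_0 \supseteq U_1 \supseteq \cdots \supseteq U_h$ by iterated application of the sparseness condition. At step $k$, since $|U_{k-1}| \geq \sigma^{k-1}|V| \geq (\sigma/2)^{h-1}|V|$ for $k \leq h$, sparseness yields integers $t_1^{(k)}, \ldots, t_r^{(k)}$ with $\prod_i t_i^{(k)} \geq 2$ and, for each $i$ with $t_i^{(k)} \geq 2$, disjoint subsets $W_{i,1}^{(k)}, \ldots, W_{i,t_i^{(k)}}^{(k)} \subseteq U_{k-1}$ of size $\lceil \sigma|U_{k-1}| \rceil$ whose average pairwise $G_i$-density is at most $\delta/8$. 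We pick some index $i_k$ with $t_{i_k}^{(k)} \geq 2$ and set $U_k$ to be the subset of $\{W_{i_k,1}^{(k)}, W_{i_k,2}^{(k)}\}$ containing fewer $G_{i_k}$-edges.

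The key halving identity is
\[e_{G_{i_k}}(W_{i_k,1}^{(k)}) + e_{G_{i_k}}(W_{i_k,2}^{(k)}) = e_{G_{i_k}}(W_{i_k,1}^{(k)} \cup W_{i_k,2}^{(k)}) - e_{G_{i_k}}(W_{i_k,1}^{(k)}, W_{i_k,2}^{(k)}) \leq e_{G_{i_k}}(U_{k-1}),\]
so the chosen sibling contains at most half the $G_{i_k}$-edges of $U_{k-1}$. By pigeonhole across the $h = r\log(2/\delta)$ iterations, some fixed index $I \in [r]$ satisfies $i_k = I$ for at least $\log(2/\delta)$ values of $k$. At each such step $e_{G_I}(U_k) \leq e_{G_I}(U_{k-1})/2$, while at the remaining steps the inclusion $U_k \subseteq U_{k-1}$ forces $e_{G_I}(U_k) \leq e_{G_I}(U_{k-1})$. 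Hence $e_{G_I}(U_h) \leq 2^{-\log(2/\delta)} e_{G_I}(V) = (\delta/2)\,e_{G_I}(V)$. Note that the greedy choice at each step need only be made with respect to the currently active index $i_k$; pigeonhole then automatically guarantees the halving accumulates in $G_I$.

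The final step is to convert this edge-count reduction into an induced subgraph of $G_I$ of size exactly $N$ with density at most $\delta$. Since $|U_h|$ is comparable to $N$ (both scale like $\sigma^h|V|$ up to a $\mathrm{poly}(\delta, 2^h)$ factor), one can extract such an $S \subseteq U_h$ by iteratively removing the vertex of highest residual $G_I$-degree until the size drops to $N$; combining the resulting degree bound with the accumulated total edge bound $e_{G_I}(U_h) \leq (\delta/2)\,e_{G_I}(V)$ shows that $S$ has $G_I$-density at most $\delta$, contradicting the standing assumption.

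The principal obstacle is the precise calibration between the exponential edge shrinkage (factor $\delta/2$ from halving) and the polynomial vertex shrinkage (factor roughly $\sigma^h$ in $U_h$): the ratio $|U_h|/N$ is only $\Theta(\delta^{1-r}2^r)$, so there is very little slack, and the extraction of $S$ must be done carefully. The choice $h = r\log(2/\delta)$ and the prefactor $4/\delta$ in the target size $N$ are tuned exactly for this balance. Once the balance is established, the iterative halving combined with pigeonhole over the $r$ candidate indices does the rest.
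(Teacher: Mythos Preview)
There is a genuine gap. Your halving identity establishes only $e_{G_I}(U_h) \leq (\delta/2)\,e_{G_I}(V) \leq (\delta/4)|V|^2$, but this is useless for controlling the \emph{density} of $G_I$ on $U_h$: since $|U_h|\approx\sigma^h|V|$, the induced density bound is of order $(\delta/2)\sigma^{-2h}$, not $\delta$. In the paper's actual application $\sigma=2^{-60m^{1/3}\log m}$ and $h\approx 6\log m$, so $\sigma^{-2h}$ is enormous. No vertex-deletion trick can close this gap: passing from $U_h$ down to a subset of size $N$ can improve the density bound by at most a factor comparable to $(|U_h|/N)^2$, which is polynomial in $1/\delta$, whereas you are missing a factor of $\sigma^{-2h}$.

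The underlying problem is that your halving step uses only the disjointness of $W_{i_k,1}$ and $W_{i_k,2}$ and never invokes the hypothesis $d_{G_i}(W_{i,1},\ldots,W_{i,t_i})\leq\delta/8$; that hypothesis is exactly what drives the result, and any correct argument must exploit it. In the Fox--Sudakov proof (this lemma is quoted from~\cite{FS08} and not proved in the present paper) one tracks densities rather than raw edge counts through the iteration, and the low cross-density between the blocks $W_{i,j}$ is what produces a genuine density decrement at each stage. Comparing absolute edge counts across sets whose orders differ by a factor of $\sigma^{-h}$, as you do, cannot recover this.
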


We only use Lemma \ref{SpBipImpliesSp} in the case $r=2$ with $G_1$ and $G_2$ the subgraphs of each color in a  two-coloring of the edges of $K_n$. We are now ready to complete the proof of Theorem \ref{TwoColour}. 

\begin{proofof}{Theorem~\ref{TwoColour}}
Let $\delta = 1/m^3$, $\Delta = 2 m^{1/3}$ and $\sigma =  2^{-60 m^{1/3} \log m}$. Then $h = 2 \log (2m^3)$. Note that
\[\left(\frac{\sigma}{2}\right)^{h-1} \geq 2^{-600 m^{1/3} \log^2 m}.\]
Let us refer to this latter quantity as $\beta$. Let $U$ be a subset of $V$ of size at least $\beta n$. We will first show that, for any graph $H$ with $m$ edges and no isolated vertices, $U$ contains either $2^{-3000 m^{4/3} \log^2 m} \binom{n}{|V(H)|}$ monochromatic copies of $H$ or two disjoint sets $W_1$ and $W_2$ of size $\sigma |U|$ such that the density of edges between them in one color is less than $\delta/8$.

If $H$ has at most $m^{2/3}$ vertices, then $H$ is a subgraph of $K_{m^{2/3}}$, so $C_2(H) \geq C_2(K_{m^{2/3}}) \geq 2.19^{-m^{4/3}}$ by a result of the first author \cite{C209}. We may therefore assume that $H$ has more than $m^{2/3}$ vertices. We may also assume that $H$ has no isolated vertices, since these do not contribute to the multiplicity constant, and hence that $|V(H)| \leq 2m$.

We partition the vertex set of $H$ into two subsets, those of large degree and those of small degree. More specifically, let $A_1$ be the $m^{2/3}$ vertices with largest degree and let $A_2$ be the complement. By choice, every vertex in $A_2$ has maximum degree at most $2 m^{1/3}$. Otherwise, every vertex in $A_1$ would have degree greater than $2 m^{1/3}$, implying the existence of more than $m$ edges.

Applying Lemma \ref{DRCApplied}, we find at least $2^{-50 m^{4/3} \log m} \binom{|U|}{m^{2/3}} \geq 2^{-700 m^{4/3} \log m} \binom{n}{m^{2/3}}$ monochromatic copies of $K_{m^{2/3}}$ such that, for each such copy $K$, every vertex subset $L$ of $K$ of size $2m^{1/3}$ has at least $2^{-50 m^{1/3} \log m} |U|$ vertices connected to each element of $L$ in the same color as the clique. Suppose, without loss of generality, that at least half of these cliques are blue.

Every vertex in $A_2$ has degree at most $2 m^{1/3}$ in $A_1$, so, given any blue $K_{m^{2/3}}$, considered as an embedding of $A_1$, there is, for each $w \in A_2$, a subset of size at most $2 m^{1/3}$ that the image of $w$ should be joined to in blue.
We know that each such vertex set has at least $s = 2^{-50 m^{1/3} \log m} |U|$ common blue neighbors. For any vertex $w \in A_2$, let this set of common neighbors be $V_w$. We would now like to embed the induced subgraph of $H$ on $A_2$ in such a way that $w$ is embedded in $V_w$ for each $w \in A_2$. Applying Lemma~\ref{Embedding} with $\alpha = 2^{-50 m^{1/3} \log m}$ and $\delta_0 = \delta/8$, we see that if the blue graph induced on $U$ is bi-$(\alpha \delta_0^{\Delta}/8m^2, \delta_0)$-dense, then, since $|V_w| \geq \alpha |U|$  for each $w \in A_2$ and $\Delta \leq 2m^{1/3}$, there are at least 
\[\left(\frac{\alpha \delta_0^{\Delta}}{8m^2} |U|\right)^{|A_2|} \geq (2^{-60 m^{1/3} \log m} |U|)^{|A_2|} \geq \left(2^{-700 m^{1/3} \log^2 m} n\right)^{|A_2|}\]
blue copies of $H[A_2]$ which form a blue copy of $H$ with our given copy of $H[A_1]$. Since $|A_2| \leq |V(H)| \leq 2m$, we therefore see that $K_n$ contains at least
\[\frac{1}{|V(H)|!}2^{-701 m^{4/3} \log m} \binom{n}{m^{2/3}} \times \left(2^{-700 m^{1/3} \log^2 m} n\right)^{|A_2|} \geq 2^{-3000 m^{4/3} \log^2 m} \binom{n}{|V(H)|}\]
blue copies of $H$, as required.

If, on the other hand, every subset of order $\beta n$ is not bi-$(\alpha \delta_0^{\Delta}/8m^2, \delta_0)$-dense either for red or blue, then, since $\sigma \leq \alpha \delta_0^{\Delta}/8m^2$ and $\left(\frac{\sigma}{2}\right)^{h-1}\geq \beta$, $(G_B, G_R)$ is $(\left(\frac{\sigma}{2}\right)^{h-1}, \sigma, \frac{\delta}{8}, 2)$-sparse, where $G_B$ and $G_R$ represent the blue and red graphs, respectively. Therefore, by Lemma \ref{SpBipImpliesSp}, for one of the colors, blue say, there is a set $|W|$ of size $2 \delta^{-1} 2^{1-h} \sigma^h n \geq 2^{-600 m^{1/3} \log^2 m} n$ such that the induced graph $G_B[W]$ has density less than $\delta=\frac{1}{m^3}$. Therefore, a random subset of $|V(H)| \leq 2m$ vertices from $W$ has in expectation at most ${|V(H)| \choose 2}/m^3 \leq 1/2$ blue edges and so is monochromatic red with probability at least $1/2$. Hence, the number of red copies of $K_{|V(H)|}$ in $W$ is at least $$\frac{1}{2}{|W| \choose |V(H)|} \geq \frac{1}{3}\frac{|W|^{|V(H)|}}{|V(H)|!} \geq 2^{-1500 m^{4/3} \log^2 m} \binom{n}{|V(H)|}.$$
This completes the proof. 
\end{proofof}

We do not expect Theorem~\ref{TwoColour} to be the end of the story. Indeed, as conjectured in~\cite{F08}, it is likely that $C_2(H) = 2^{-m^{1+o(1)}}$. However, this conjecture seems well beyond the reach of current methods. Similarly, it is tempting to conjecture that for $q \geq 3$, we have $C_q(H) \geq 2^{-m^{3/2 + o(1)}}$.  Here, the state of the art, easily derivable from a sampling argument and the bound $r_q(H) \leq 2^{c_q m^{2/3}}$ for the $q$-color Ramsey number of a graph $H$ with $m$ edges and no isolated vertices~\cite{JP16}, is $C_q(H) \geq 2^{-m^{5/3+o(1)}}$.

\section{A Ramsey problem for connected matchings} \label{sec:match}

In this section, we discuss a common generalization of two classical problems, the Ramsey number of a triangle versus a large clique and Hadwiger's conjecture for graphs with independence number two. The \emph{Ramsey number} $r(3,t)$ is the minimum integer $n$ such that every triangle-free graph on $n$ vertices has independence number at least $t$. The problem of estimating $r(3,t)$ has been studied extensively over the last 60 years. After several successive improvements, the asymptotic behavior of $r(3,t)$ was determined by Ajtai, Komlos and Szemer\'edi \cite{AjKoSz} and by Kim \cite{Kim},
 who proved upper and lower bounds showing that $r(3,t)=\Theta(t^2/\log t)$.

The celebrated Hadwiger conjecture states that any graph $G$ with chromatic number $k$ has a clique minor of order $k$. Since the chromatic number of a graph $G$ is always at least $|V(G)|/\alpha(G)$, this conjecture implies that any graph $G$ on $n$ vertices has a clique minor of order at least $n/\alpha(G)$. The problem of proving that this implication holds when $\alpha(G) = 2$ has attracted considerable attention in recent years. The aim in this case is to show that an $n$-vertex graph with independence number two has a clique minor of order $n/2$. A result of Duchet and Meyniel~\cite{DuMe} demonstrates that such a $G$ has a clique minor of order at least $n/3$. Seymour and Mader have both asked whether the factor $1/3$ can even be improved to $1/3+\epsilon$ for some constant $\epsilon>0$. 

This problem of improving the bound for the maximum clique minor in a graph with independence number two can be reformulated in terms of an interesting Ramsey-type problem. A set of pairwise disjoint edges
$e_1, \ldots, e_t$ of $G$ is called a {\it connected matching of size $t$} if, for every pair of distinct edges $e_i, e_j$, there is at least one edge of $G$ connecting an endpoint of $e_i$ to an endpoint of $e_j$. 
It was observed by Thomass\'e that the following conjecture  is equivalent to the above problem of improving the constant factor $1/3$.

\begin{conjecture} \label{connected-matching}
There exists a constant $C$ such that every $n$-vertex graph $G$ with $\alpha(G)=2$ contains a connected matching of size at least $n/C$. 
\end{conjecture}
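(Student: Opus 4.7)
My plan begins from the key structural consequence of $\alpha(G)=2$: for every vertex $v$, the non-neighbor set $\bar N(v)$ induces a clique in $G$, since two non-adjacent non-neighbors of $v$ would together with $v$ form an independent triple. This settles the low-degree regime immediately. If some vertex $v$ has $\deg(v)\leq n-n/C$, then $\bar N(v)$ is a clique of order at least $n/C$, and any perfect matching inside it is automatically a connected matching of order $\Omega(n/C)$, since every pair of matching edges inside a clique is joined by many edges of $G$.

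In the complementary case, every vertex has degree at least $(1-1/C)n$, so $\bar G$ has maximum degree at most $n/C$. I would then take a near-perfect matching $M$ of $G$ and pass to the auxiliary matching graph $H$ on vertex set $M$, where $e_i\sim e_j$ iff there is an edge of $G$ between their endpoints; a connected matching in $G$ is exactly a clique in $H$. The crucial observation is that $\alpha(H)\leq 2$, because three pairwise non-adjacent matching edges would yield an independent triple of $G$ by selecting one endpoint from each. Moreover, two matching edges are non-adjacent in $H$ only if all four cross-pairs of endpoints lie in $\bar G$, so the high-degree assumption gives $\bar H$ maximum degree at most $n/(2C)$.

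The main obstacle is now to extract a linear-sized clique from $H$, which is essentially a scaled-down instance of the original conjecture. Tur\'an's theorem applied to $\bar H$ produces only a clique of constant size $\Omega(C)$, and the Ajtai--Koml\'os--Szemer\'edi bound for triangle-free graphs improves this only to $\Omega(\sqrt{C\log n})$, still far from linear. To break this circularity one would need to exploit finer structure inherited by $H$ from $G$: each edge of $\bar H$ corresponds to a $K_{2,2}$ in $\bar G$ whose parts coincide with matching edges of $M$, so controlling the count and alignment of such $K_{2,2}$'s, perhaps via pseudorandomness properties analogous to those underpinning Kim's triangle-free construction, appears to be the decisive ingredient. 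A reasonable intermediate target for this framework is a connected matching of order $\Omega(n/\mathrm{polylog}\,n)$, which would already improve the Duchet--Meyniel $1/3$ bound asymptotically; pushing to linear order seems to require a genuinely new idea, since any purely degree- or density-based bound on $H$ runs into the same barrier as the original question.
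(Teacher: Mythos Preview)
The statement you are addressing is Conjecture~\ref{connected-matching}, which the paper explicitly leaves open; the paper neither proves it nor claims to, recording instead that the best known bound is a connected matching of size $\Omega(n^{4/5}\log^{1/5}n)$ due to Fox. There is therefore no proof in the paper against which to compare your attempt.

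Your proposal is not a proof either, and to your credit you say so. The reduction you set up is sound as far as it goes: the low-degree case is immediate, and in the high-degree case the auxiliary matching graph $H$ does satisfy $\alpha(H)\le 2$, with $\bar H$ of maximum degree at most $n/(2C)$. But the step ``extract a linear-sized clique from $H$'' is, as you yourself note, circular: finding a large clique in a graph with independence number two is precisely the complementary form of the Ramsey problem $r(3,t)$. Since $\bar H$ is triangle-free on roughly $n/2$ vertices, the sharp bound $r(3,t)=\Theta(t^2/\log t)$ yields a clique in $H$ of order $\Theta(\sqrt{n\log n})$ and no better in general (Kim's construction is the obstruction). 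Your quoted figure of $\Omega(\sqrt{C\log n})$ undersells what Ajtai--Koml\'os--Szemer\'edi actually gives here---with $N\approx n/2$ and $d\le n/(2C)$ one gets $\Omega(C\log n)$, and ignoring the degree bound altogether gives $\Omega(\sqrt{n\log n})$---but either way the outcome is well below the $n^{4/5}$ benchmark already in the literature, so the framework as stated does not even recover the state of the art. The extra structure you point to, that non-edges of $H$ correspond to matching-aligned copies of $K_{2,2}$ in $\bar G$, is real, but you offer no mechanism for exploiting it; since Kim-type triangle-free graphs are themselves highly pseudorandom, a generic pseudorandomness hypothesis on $\bar H$ is unlikely to separate it from a worst-case instance.
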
  

It was proved in \cite{KaPlTo} that if a graph $G$ with independence number two has a connected matching of size $t$, then it has a clique minor of order at least $n/3+t/9$. In the other direction, the same authors showed that if $G$ has a clique minor of order at least $n/3+t$, 
then $G$ contains a connected matching of size at least $\frac{3t}{4} - 1$. Improving on earlier work, Fox~\cite{Fox} showed that an $n$-vertex graph $G$ with $\alpha(G)=2$ has a connected matching of size $\Omega(n^{4/5} \log^{1/5}n)$, which remains the best known bound for Conjecture~\ref{connected-matching}.

Motivated by the above conjecture, F\"uredi, Gy\'arf\'as and Simonyi \cite{FuGySi} proposed a more general question. A set of pairwise disjoint edges
$e_1, \ldots, e_t$ of $G$ is called an {\it $s$-connected matching of size $t$} if, for every pair of distinct edges $e_i, e_j$, there are at least $s$ edges of $G$ connecting an endpoint of $e_i$ to an endpoint of $e_j$. For $1 \leq s \leq 4$, let $f_s(t)$ be the minimum $n$ such that every graph on $n$ vertices with independence number two contains an $s$-connected matching of size $t$. Note that a $4$-connected matching is just a clique of order $2t$, so the function $f_4(t)$ is just the Ramsey number $r(3,2t)$ mentioned above and has order of magnitude $\Theta(t^2/\log t)$. Using this notation, Conjecture \ref{connected-matching} states that $f_1(t)$ is linear in $t$. F\"uredi, Gy\'arf\'as and Simonyi \cite{FuGySi} asked to determine the order of magnitude of $f_s(t)$. As a first step towards this goal, they asked whether the functions can be separated, i.e., whether
$f_1(t) \ll f_2(t) \ll f_3(t) \ll f_4(t)$. They also proved that $f_2(t) \leq O(t^{3/2})$. We improve this bound to $f_2(t) \leq O(t^{4/3}\log^{-1/3} t)$. We also prove that $f_3(t) \leq O(t^{3/2})$, which implies that $f_3(t) \ll f_4(t)$, partially answering their question.

\begin{theorem}
There are positive constants $c, c'$ such that if $G$ is an $n$-vertex graph with $\alpha(G)=2$, then $G$ contains a $3$-connected matching of size 
$c n^{2/3}$ and a $2$-connected matching of size $c' n^{3/4} \log^{1/4} n$.
\end{theorem}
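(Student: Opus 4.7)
My plan exploits the fact that $\bar{G}$ is triangle-free, which follows immediately from $\alpha(G)=2$. The key structural observation is that for every vertex $v$, the non-neighborhood $\bar{N}(v)$ is an independent set in $\bar{G}$ (otherwise there would be a triangle in $\bar{G}$ through $v$) and hence a clique in $G$. Consequently, any vertex of $\bar{G}$-degree $k$ certifies a clique in $G$ of size $k$, inside which any matching is $4$-connected (and in particular $s$-connected for every $s\leq 4$).

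The natural case analysis is then: either $\Delta(\bar{G})$ is large and we extract a clique of the target size directly (handling the $3$-connected problem when $\Delta(\bar{G})\geq 2cn^{2/3}$ and the $2$-connected problem when $\Delta(\bar{G})\geq 2c'n^{3/4}\log^{1/4}n$), or $\Delta(\bar{G})$ is small, in which case $G$ has minimum degree close to $n$ and therefore contains a near-perfect matching $M$ of size about $n/2$. In the latter case I would define the conflict graph $H$ on $V(H)=M$ with $ee'\in E(H)$ iff the pair $(e,e')$ has at least two cross non-edges in $G$ (for the $3$-connected problem), respectively at least three (for the $2$-connected problem). Since each edge of $\bar{G}$ lies between at most one pair of matching edges, $|E(H)|\leq e(\bar{G})/2$ in the first case and $|E(H)|\leq e(\bar{G})/3$ in the second, and a quick check shows that $\Delta(H)\leq\Delta(\bar{G})$. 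A Caro--Wei/Tur\'an estimate on $H$ then produces the desired $s$-connected matching, with the extra $\log^{1/4}n$ factor in the $2$-connected bound coming in on the clique side via the Shearer/AKS inequality $\alpha(\bar{G})=\Omega(n\log\Delta/\Delta)$ for triangle-free $\bar{G}$, which gives a clique in $G$ that is larger by a logarithmic factor when $\Delta(\bar{G})$ is moderate.

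The main difficulty is the intermediate regime: the naive balance between the two cases above only yields a matching of size $\Theta(\sqrt{n})$ (it balances when $\Delta(\bar{G})\sim\sqrt{n}$), which is too weak to reach $n^{2/3}$. To bridge this I plan to interpolate with a K\H{o}v\'ari--S\'os--Tur\'an bound applied to $\bar{G}$: whenever $e(\bar{G})$ is large enough to force a $K_{2,t}$ in $\bar{G}$, the $t$ common $\bar{G}$-neighbors form a $\bar{G}$-independent set by triangle-freeness, hence a clique of size $t$ in $G$. Combining this with a refinement of the conflict-graph argument in which the matching $M$ is restricted to the subgraph induced by vertices of controlled $\bar{G}$-degree (so that $\Delta(H)$, and not just the average degree of $H$, is kept small) should cover the remaining window of values of $e(\bar{G})$. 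Tuning the thresholds in this three-way split so that at least one of "clique from a high-$\bar{G}$-degree vertex", "clique from $K_{2,t}$ via KST", or "matching from $H$ via Caro--Wei" always wins at the $\Omega(n^{2/3})$ (respectively $\Omega(n^{3/4}\log^{1/4}n)$) target is the delicate step where most of the work will lie.
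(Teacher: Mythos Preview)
Your three-way split does not close the gap, because the K\H{o}v\'ari--S\'os--Tur\'an step is redundant with the maximum-degree clique step. If $\bar G$ contains a $K_{2,t}$, then in particular it contains a $K_{1,t}$, i.e.\ some vertex has $\bar G$-degree at least $t$; so the KST route never produces a larger clique than you already get from $\Delta(\bar G)$. Quantitatively, KST yields a clique of size roughly $(e(\bar G)/n^{3/2})^2\le \Delta(\bar G)^2/n<\Delta(\bar G)$. You are therefore left with the two-case argument you yourself flagged as giving only $\Theta(\sqrt n)$, and no amount of threshold-tuning or restricting $M$ to low-degree vertices can beat that barrier: your conflict graph $H$ lives on $|M|\le n/2$ vertices, so $\alpha(H)\cdot(\Delta(H)+1)\le n/2$, while the clique branch gives at most $\Delta(\bar G)\ge\Delta(H)$, and the balance is at $\sqrt n$.

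The idea you are missing is to build the conflict graph not on a pre-chosen matching but on (a dense subset of) \emph{all} edges of $G$, declaring two edges adjacent in $H$ if they share a vertex \emph{or} have too few $G$-edges between them. An independent set in $H$ is then automatically a matching with the required connectivity, and now $|V(H)|=\Theta(n^2)$. For the $3$-connected bound one checks, after first assuming $\delta(G)\ge n-2t$, that a typical edge $(u,v)$ has $|B_{u,v}|\le 4t$ and (by a second-moment/Markov computation) $|A_{u,v}|\le O(t^2/n)$, where $A_{u,v},B_{u,v}$ are the joint non-neighbourhood and the union of the two non-neighbourhoods; this gives $\Delta(H)=O(n+t^2)$ and Tur\'an yields $\alpha(H)\ge\Omega(n^2/t^2)$, balancing at $t=\Theta(n^{2/3})$. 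For the $2$-connected bound there is a further structural point you have not seen: three \emph{disjoint} edges cannot be pairwise connected by at most one $G$-edge when $\alpha(G)=2$ (otherwise one finds an independent triple), so every triangle of the corresponding conflict graph $H'$ has two intersecting edges, giving $t(H')=O(n\,\Delta(H')\,|V(H')|)$; then the Shearer-type bound for graphs with few triangles applied to $H'$ (not to $\bar G$) yields the extra logarithmic factor and the exponent $3/4$.
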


\begin{proof}
Let $t$ be the size of the matching we are trying to find. Since $\alpha(G)=2$, note that for every vertex $v \in G$ its non-neighbors form a clique in $G$. Therefore, we can assume that every vertex is adjacent to all but at most $2t-1$ vertices, i.e., $\delta(G) \geq n-2t$, or we are done.
Pick two vertices $u, v$ of $G$ uniformly at random. Let $A_{u,v}$ be the set of vertices adjacent to neither $u$ nor $v$ and $B_{u,v}$ the set of vertices adjacent to at most one of $u$ or $v$. By the discussion above, it is easy to see that $|B_{u,v}| \leq 2 (n-\delta(G)) \leq 4t$. We need the following claim.

\begin{claim}
With probability at least $3/5$, the set $A_{u,v}$ has size at most $10 t^2/n$.
\end{claim}

\begin{proof}
Note that a vertex $w$ of $G$ belongs to $A_{u,v}$ only if both $u$ and $v$ are non-neighbors of $w$. The probability of this event is at most
$\big(\frac{n-\delta(G)}{n}\big)^2 \leq 4t^2/n^2$. Therefore, the expected size of  $A_{u,v}$ is at most $4t^2/n$. The result now follows from Markov's inequality.
\end{proof}

This claim implies that all but at most $0.4{n \choose 2}\leq n^2/5$ pairs $u,v$ in $G$ have $|A_{u,v}| \leq 10t^2/n$. Since $G$ has at least
$n (n-2t)/2$ edges and $t \ll n$, we conclude that there is a set $F$ of at least $n^2/4$ edges of $G$ such that $|A_{u,v}| \leq 10t^2/n$ for every edge $(u,v) \in F$. 

To find a 3-connected matching in $G$, choose $t=0.2 \,n^{2/3}$ and consider an auxiliary graph $H$ whose vertex set is $F$ and where two vertices $(u, v)$ and $(u', v')$ are adjacent if 
these pairs share a vertex or there are at most two edges of $G$ between $(u, v)$ and $(u', v')$. Note that by definition an independent set in $H$ corresponds to a $3$-connected matching in $G$.
To find a large independent set in $H$, we will estimate its maximum degree. Given a pair $(u, v) \in F$, there are at most $2n$ other pairs which intersect $(u, v)$. Note also that if between disjoint pairs $(u, v)$ and $(u', v')$ there are at most
2 edges, then either both $u', v' \in B_{u,v}$ or at least one of them is in $A_{u,v}$. Therefore, the number of pairs $(u', v')$ incident to  
$(u, v)$ in $H$ is at most 
$$\Delta(H) \leq 2n+ {|B_{u,v}| \choose 2}+|A_{u,v}| \cdot n \leq 2n + 8t^2 +10t^2 \leq 19t^2,$$
where we used that $n \ll t^2$. Thus, a well known lower bound on $\alpha(H)$ implies that it contains an independent set of size at least $\frac{|V(H)|}{\Delta(H)+1} \geq \frac{n^2/4}{20t^2}>t$. 

The proof for 2-connected matchings is similar, but with one additional twist. Let $t=c' n^{3/4} \log^{1/4} n$ for some $c'$ we choose later and let $H'$ be the graph with vertex set $F$ and where two vertices $(u, v)$ and $(u', v')$ are adjacent if 
these pairs share a vertex or there is at most one edge of $G$ between $(u, v)$ and $(u', v')$. An independent set in $H'$ will give us a 2-connected matching. To estimate the maximum degree of $H'$, note that if between disjoint pairs $(u, v)$ and $(u', v')$ there is at most one edge,
then either both $u', v' \in A_{u,v}$ or one of them is in $A_{u, v}$ and the other is in $B_{u,v}$. 
Therefore, the number of pairs $(u', v')$ incident to  
$(u, v)$ in $H'$ is at most 
$$\Delta(H') \leq 2n+ {|A_{u,v}| \choose 2}+|A_{u,v}| \cdot |B_{u,v}| \leq 2n + 50t^4/n^2 +40t^3/n \leq 41 t^3/n,$$
where we used that both $n, t^4/n^2 \ll t^3/n$. Next we observe that the vertex set of $H'$ does not contain three disjoint pairs
$(u_i, v_i), 1 \leq i \leq 3$, which form a triangle. Suppose otherwise. Then, since the number of edges between every pair is at most one, we can assume, without loss of generality, that $u_1$ is a non-neighbor of both $u_2$ and $v_2$. Since also $u_1$ has at most one neighbor in
$u_3, v_3$, we can assume it is not adjacent to $u_3$. Moreover, $u_3$ has at most one neighbor in $u_2, v_2$. But then, if $u_3$ is not adjacent to $v_2$, say, $u_1, v_2, u_3$ form an independent set in $G$, contradicting $\alpha(G)=2$.
Thus, every triangle in $H'$ has at least two vertices whose corresponding pairs intersect. Therefore, the total number of triangles $t(H')$ in $H'$ satisfies
$t(H') \leq 2n \cdot \Delta(H')\cdot |V(H')| \leq 82t^3 |V(H')|$. To conclude the proof, we use a well known bound (\cite{Bol}, Lemma 12.16) on the independence number of a graph with few triangles (see also~\cite{AlKrSu} for a more general result).
This bound says that 
$$\alpha(H') \geq 0.1 \frac{|V(H')|}{\Delta(H')} \Big(\log \Delta(H')-0.5 \log (t(H')/|V(H')|)\Big) \geq \Omega\Big(\frac{n^3}{t^3} \log (t^{3/2}/n)\Big) =\Omega\Big(\frac{n^3}{t^3} \log n \Big),$$
where we used that $|V(H')| \geq n^2/4, \Delta(H')\leq 41 t^3/n$ and $t^{3/2}/n >n^{1/8}$. By choosing $c'$ in the definition of $t$ sufficiently small, we have that $\alpha(H')>t$, completing the proof.
\end{proof}

In order to obtain further separation results between the functions $f_1(t), f_2(t)$ and $f_3(t)$, one needs to obtain good lower bounds. In particular, it would be interesting to show that $f_2(t) \geq t^{1+\epsilon}$ for some constant
$\epsilon>0$.

\vspace{3mm}
\noindent
{\bf Acknowledgements.} Section~\ref{sec:ind} was first written in May 2015, predating a recent paper of Geneson~\cite{G18} showing that $T_k \leq k^{5/2 + o(1)}$. 
More recently J. Balogh, W. Linz and L. Mattos \cite{BLM} independently investigated the question of estimating $T_k$ and showed that $T_k = k^{2 + o(1)}$ (which is slightly weaker than Corollary \ref{T_k}). 
We would like to thank Kevin Ford for some helpful discussions on this theme and also thank the anonymous referees for their careful reading of the paper and useful suggestions.

\end{document}